\documentclass[12pt]{amsart}
\usepackage{epsf}
\usepackage{psfrag}
\usepackage{fullpage}
\usepackage{float}
\usepackage{mathrsfs}
\usepackage{amsfonts}
\usepackage[centertags]{amsmath}
\usepackage{amssymb}
\usepackage{amsthm}
\usepackage{graphicx}
\usepackage{float}
\usepackage[all]{xy}
\usepackage{tikz-cd}
\usepackage{comment}

\usepackage{youngtab}
\usepackage{tikz}
\usetikzlibrary[shapes]
\usepackage{multirow}
\usepackage{caption}
\usepackage{xparse}
\usepackage{lscape}
\usepackage{subcaption} 
\usepackage{enumerate,enumitem}
\usepackage[margin=1in]{geometry} 
\usepackage[pdf]{pstricks}
\usetikzlibrary{matrix,arrows,decorations.pathmorphing}

\usepackage{ytableau}

\usepackage{quiver}
\usepackage{adjustbox} 
\usepackage{tikz}
\usepackage{graphicx}
\usepackage[dvipsnames]{xcolor}
\usetikzlibrary{arrows.meta,positioning,backgrounds}
\usepackage[T1]{fontenc}
\usepackage{lmodern}
\usepackage{microtype}
\usepackage{parskip}
\pgfdeclarelayer{background}
\pgfsetlayers{background,main}
\usepackage{aliascnt}
\usepackage{hyperref}
\hypersetup{
    colorlinks=true,
    linkcolor=Maroon,
    filecolor=Magenta,      
    urlcolor=Maroon,
    citecolor=Maroon
}
\usepackage[capitalize,noabbrev,nameinlink]{cleveref}

\usetikzlibrary{calc}

\tikzset{
  edge/.style={
    line width=0.9pt,
    line cap=round,
    line join=round
  },
  medge/.style={
    line width=0.9pt,
    double,
    double distance=1.3pt,
    line cap=round
  },
  bdot/.style={
    circle,
    fill=black,
    inner sep=1.35pt
  },
  Bdot/.style={
    circle,
    fill=black,
    inner sep=1.7pt
  },
  wdot/.style={
    circle,
    draw=black,
    fill=white,
    line width=0.8pt,
    inner sep=1.8pt
  },
  lab/.style={
    font=\small
  }
}

\definecolor{srcred}{rgb}{0.78,0.05,0.07}
\newcommand{\BPl}[1]{\boldsymbol{P}_{#1}}
\newcommand{\RPl}[1]{{\color{srcred}\boldsymbol{P}_{#1}}}
\newcommand{\Rv}[2]{R^{#1}_{#2}}
\newcommand{\BRv}[2]{\boldsymbol{R}^{#1}_{#2}}
\newcommand{\RRv}[2]{{\color{srcred}\boldsymbol{R}^{#1}_{#2}}}
\newcommand{\Qv}[1]{Q_{#1}}
\newcommand{\BQv}[1]{\boldsymbol{Q}_{#1}}

\newcommand{\Pl}[1]{P_{#1}}

\newtheorem{theorem}{Theorem}[section]

\newaliascnt{proposition}{theorem}
\newtheorem{proposition}[proposition]{Proposition}
\aliascntresetthe{proposition}

\newaliascnt{lemma}{theorem}
\newtheorem{lemma}[lemma]{Lemma}
\aliascntresetthe{lemma}

\newaliascnt{corollary}{theorem}

\aliascntresetthe{corollary}

\newaliascnt{conjecture}{theorem}

\aliascntresetthe{conjecture}

\newaliascnt{question}{theorem}

\aliascntresetthe{question}

\theoremstyle{definition}
\newaliascnt{definition}{theorem}
\newtheorem{definition}[definition]{Definition}
\aliascntresetthe{definition}

\newaliascnt{convention}{theorem}

\aliascntresetthe{convention}

\newaliascnt{remark}{theorem}
\newtheorem{remark}[remark]{Remark}
\aliascntresetthe{remark}

\newaliascnt{example}{theorem}
\newtheorem{example}[example]{Example}
\aliascntresetthe{example}

\newaliascnt{warning}{theorem}

\aliascntresetthe{warning}

\theoremstyle{plain}
\newtheorem*{introtheorem}{Theorem}

\numberwithin{equation}{section}

\newcommand{\C}{{\mathbb C}}
\newcommand{\CC}{{\mathbb {C}}}

\newcommand{\Z}{{\mathbb {Z}}}

\newcommand{\PP}{{\mathbb {P}}}

\def\Acal{\mathcal{A}}
\def\A{\mathcal{A}}

\def\Xcal{\mathcal{X}}
\def\Fcal{\mathcal{F}}
\DeclareMathOperator{\arr}{arr}

\newcommand{\txx}{\widetilde{\mathbf{x}}}

\newcommand{\cyc}{{\operatorname{cyc}}}

\DeclareMathOperator{\refl}{refl}

\DeclareMathOperator{\SL}{SL}

\newcommand{\tpsi}{\widetilde{\psi}}

\DeclareMathOperator{\Gr}{Gr}

\begin{document}

\title{Grassmannian tree webs and cluster variables}

\author{Jian-Rong Li}
\address{Faculty of Mathematics, University of Seville, Calle Tarfia s/n, 41012 Seville, Spain}
\email{lijr07@gmail.com}

\author{Chenglu Wang}
\address{Department of Mathematics, Harvard University, Cambridge, MA, United States}
\email{cwang@math.harvard.edu}

\author{Lauren Williams}
\address{Department of Mathematics, Harvard University, Cambridge, MA, United States}
\email{williams@math.harvard.edu}

\date{}

\begin{abstract}
Homogeneous coordinate rings of Grassmannians are among the most fundamental instances of cluster algebras arising in ``nature.''  Despite their importance, however, the cluster variables of these  cluster algebras are not well-understood, except in special cases.  For the Grassmannian of $3$-planes in $n$-space, Fomin and Pylyavskyy conjectured \cite{FP16} that the cluster and frozen variables are exactly the indecomposable, non-elliptic web invariants which are \emph{arborizable}. They proved that in particular, if an $\mathrm{SL}_3$ tensor diagram is a planar tree, its web invariant is a cluster or frozen variable.  In this article, we give a new proof of their result for $\mathrm{Gr}(3,n)$ planar tree webs, and we generalize the result to $\mathrm{Gr}(4,n)$: if an $\mathrm{SL}_4$ tensor diagram for $\Gr(4,n)$ is a planar tree web, its web invariant is a cluster or frozen variable. Our proof uses some of the cluster quasihomomorphisms from \cite{even2023cluster} and \cite{plabictangle} (namely, upper promotion and spurion promotion), as well as a new \emph{6-leg pattern} map.
\end{abstract}

\maketitle
\setcounter{tocdepth}{1}
\tableofcontents

\section{Introduction}

When cluster algebras were discovered by Fomin and Zelevinsky around 
2000 \cite{FZ1},   Grassmannians \cite{scott} were among the first examples showing that cluster structures arise naturally as coordinate rings of 
algebraic varieties.  Nevertheless, we still do not have a concrete understanding of what are the cluster variables for Grassmannians, outside of the finite type cases: $\Gr(2,n)$, $\Gr(3,6)$, $\Gr(3,7)$, and $\Gr(3,8)$ \cite{scott}. 

In \cite{FP16}, Fomin and Pylyavskyy gave a beautiful conjectural characterization of the cluster variables for $\Gr(3,n)$ in terms of tensor diagrams for $\SL_3$ tensor invariants in the sense of Kuperberg \cite{Kuperberg_1996}. They conjectured that the cluster and frozen variables for $\Gr(3,n)$ are exactly the indecomposable, non-elliptic web invariants which are \emph{arborizable}, namely they admit two presentations: (1) a planar web, which may have cycles, and (2) a tree diagram, which may fail to be planar \cite[Conjecture 10.1]{FP16}. They proved that when both properties are satisfied simultaneously, the corresponding web invariant is a cluster variable \cite[Corollary 8.10]{FP16}. 

Since then, Fraser verified that in the finite mutation type cases ($\Gr(3,9)$ and $\Gr(4,8)$) \cite{Fraser2}, every cluster variable is an indecomposable arborizable web invariant.  More recently, Banaian, Catania, Gaetz, Moore, Musiker, and Wright
\cite{BCG} enumerated all indecomposable arborizable degree four web invariants for $\Gr(3,n)$, showing that their count agrees with the conjectural enumeration of degree four cluster variables obtained in \cite{clustering}; this provides evidence for the Fomin-Pylyavskyy conjecture for cluster variables of Pl\"ucker degree four.

In recent work on the amplituhedron \cite{even2023cluster}, Even-Zohar, Lakrec, Parisi, Sherman-Bennett, Tessler, and the third author of this paper used a  {cluster quasihomomorphism} called \emph{promotion} to prove the cluster adjacency conjecture for BCFW tiles.  Followup work \cite{plabictangle} developed the framework of \emph{plabic tangles}, which uses plabic graphs to define rational maps between Grassmannians which in nice cases are cluster quasihomomorphisms.

In this paper, we use the upper and spurion promotion maps from \cite{even2023cluster} and \cite{plabictangle}, as well as a new \emph{6-leg pattern} map, to prove the following result.

\begin{introtheorem}[\cref{thm:main1} and \cref{thm:main2}] \label{thm:main}
If an $\SL_3$ tensor diagram $D$ for $\Gr(3,n)$ is a tree web, then $[D]$ is a cluster or coefficient variable in $\C[\widehat{\Gr}(3,n)]$.
Similarly, if an $\SL_4$ tensor diagram $D$ for $\Gr(4,n)$ is a tree web, then $[D]$ is a cluster or coefficient variable in $\C[\widehat{\Gr}(4,n)]$.
\end{introtheorem}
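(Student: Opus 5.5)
We argue by induction on the number of internal vertices of the planar tree web $D$, using the quasihomomorphisms from \cite{even2023cluster} and \cite{plabictangle} to reduce a tree web to a smaller one. We may assume $D$ is indecomposable, since otherwise $[D]$ factors and the statement is about the factors. The base cases are webs with at most one internal vertex. For $\SL_3$ such a web is a Plücker coordinate $\Delta_{abc}$, or a single boundary leaf pattern giving a frozen or cluster Plücker coordinate. For $\SL_4$, a single internal vertex of degree $4$ again gives a Plücker coordinate. These are cluster or coefficient variables by \cite{scott}.

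For the inductive step, we use that a planar tree with at least two internal vertices has a ``leaf cluster'': an internal vertex all of whose neighbours but one are boundary vertices, and these boundary vertices are cyclically consecutive by planarity. After a cyclic rotation we may place this configuration near boundary vertices $n-1,n$ (or $n-2,n-1,n$). The cyclic shift is realized by the twist/rotation automorphism, which preserves the set of cluster variables. We then apply a map $\Phi\colon \Gr(k,n)\dashrightarrow \Gr(k,n')$, namely upper promotion, spurion promotion, or, for $\SL_4$, the new 6-leg pattern map. Each is a cluster quasihomomorphism, so it sends cluster variables to cluster variables up to Laurent monomials in frozen variables.

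The key computation is that the pullback $\Phi^*[D']$ of the web invariant of a smaller tree $D'$ equals $[D]$ times a monomial in frozen Plücker coordinates. Here $D'$ is obtained by contracting the leaf cluster or by replacing a local pattern with fewer internal vertices. Concretely, $\Phi$ replaces certain columns $v_i$ by vectors such as cross products $v_{i-1}\times v_i$ (for $k=3$) or $v_{i-1}\wedge v_i\wedge v_{i+1}$ contracted appropriately (for $k=4$). A web edge from an internal vertex to a pair of adjacent boundary leaves computes exactly such an expression. So substituting into $[D']$ reproduces the local subtree of $D$, and this can be checked by a direct tensor contraction. Then induction gives that $[D']$ is a cluster variable, hence $[D]$ is one up to frozen factors. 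Since $[D]$ is irreducible and not frozen, the quasihomomorphism property shows that $[D]$ is a genuine cluster variable; if it is frozen, it is a coefficient variable.

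The main obstacle is the $\SL_4$ case. There, internal vertices have degree $4$, and the local leaf configurations are more varied: two or three consecutive leaves on a vertex, and also configurations in which two adjacent internal vertices together carry six boundary legs. Upper and spurion promotion handle the first types. The 6-leg pattern requires the new map, and we must prove that this map is a cluster quasihomomorphism, presumably by exhibiting a seed, or a plabic tangle, in which it acts by a sequence of mutations and frozen rescalings. I would first enumerate the possible leaf patterns of a planar $\SL_4$ tree via a counting argument on degrees. This shows every tree has one of a short list of extremal patterns. I would then verify the pullback identity for each pattern, and verify that the induction decreases complexity. The degree (number of internal vertices) must strictly drop while $D'$ stays a planar tree web for some $\Gr(4,n')$.
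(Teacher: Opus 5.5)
Your overall architecture matches the paper's: induction on local patterns, promotion-type quasihomomorphisms, a new 6-leg map for $\SL_4$, and irreducibility to remove the frozen factor. However, the inductive step as you state it does not work.

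In a $\Gr(k,n)$ tree web the graph is bipartite and every boundary vertex is black. So your ``leaf cluster'' (an internal vertex all of whose neighbours but one are boundary) is necessarily white, and the edge leaving it carries a covector: $v_{i-1}\times v_i\in V^*$ for $k=3$, or a $3$-form or $2$-form for $k=4$. Contracting it produces a diagram with a covector (white) boundary vertex, not a $\Gr(k,n')$ web. No promotion map replaces a column by such an object, so ``$\Phi$ replaces $v_i$ by $v_{i-1}\times v_i$'' is a type error.

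The correct extremal configuration is two levels deep. Root the tree at a white vertex and take a black internal vertex $B$ of maximal depth; then every white child of $B$ has only boundary children.
\begin{itemize}
\item For $k=3$ this is the 4-leg pattern. Its output $b(v_n\times v_1,v_2\times v_3)=P_{23n}v_1-P_{123}v_n$ is a vector in $\mathrm{span}(v_n,v_1)$, equal to $P_{123}$ times the promoted column $Z_n'=\frac{P_{23n}}{P_{123}}Z_1-Z_n$. That is what lets you erase $v_1,v_2,v_3$ and keep a $\Gr(3,[4,n])$ tree web.
\item For $k=4$, the multiplicity of $B$'s parent edge and the number of its white children force exactly three kinds of pattern: the 5-leg patterns (two mirror cases, which need reflection as well as rotation), the 9-leg pattern, and the 6-leg pattern. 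Their outputs are $(n1)\ast(234)$, $(n12)\ast(345)\ast(678)$, and the $2$-vector $(n-1,n,1)\ast(234)$ replacing $Z_{n-1}\wedge Z_n$.
\end{itemize}
Your proposed ``counting argument on degrees'' and the phrase ``replacing a local pattern'' need to be replaced by this analysis.

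Second, both substantive inputs are deferred or assumed. The quasihomomorphism property of the 6-leg map is the paper's main new content, and ``presumably by exhibiting a seed'' is not a proof. The paper starts from the rectangles seed of $\Gr(4,n)$ in cyclic order $5,\dots,n,1,2,3,4$. It applies an explicit sequence of 13 mutations to reach a seed containing $\Phi_n$ of the rectangles seed of $\Gr(4,[5,n])$, then checks conditions (1)--(3) of Definition~\ref{def:quasi}, including the exchange ratios.

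Likewise, irreducibility of $[D]$ is what removes the frozen factor, so it must be proved, not assumed. The paper uses \cite[Lemma 12.2]{FP16}, or a multidegree argument: a frozen factor supported on consecutive boundary vertices could be drawn disjoint from the rest. Your opening reduction (``otherwise $[D]$ factors and the statement is about the factors'') is backwards, since a reducible $[D]$ could not be a cluster variable. Finally, for spurion promotion the normalizing factor $\langle 12|345|678\rangle$ is not a frozen Pl\"ucker coordinate but a cluster variable frozen by localization, and the irreducibility step must accommodate it.
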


This gives a new proof of \cite[Corollary 8.10]{FP16} in the case of $\Gr(3,n)$ and extends the result to the case of $\Gr(4,n)$.

The methods used here apply more broadly beyond $\Gr(3,n)$ and $\Gr(4,n)$ tree webs.  This will be explored in followup work.

\noindent{\bf Acknowledgements:~}  
C. W. would like to thank Thomas Brazelton and Ran Tessler for helpful conversations. J. L. was supported by the Beatriz Galindo Senior grant BG24/00114 from the Spanish Ministry of Science, Innovation and Universities. L.W. has been supported by the National Science Foundation under Award No. DMS-2152991 and DMS-2552947. Any opinions, findings, and conclusions or recommendations expressed in this material are
those of the author(s) and do not necessarily reflect the views of the National Science
Foundation. 

ChatGPT was used to help with typesetting hand-drawn figures and technical computations related to \cref{thm:6leg}. All resulting material was reviewed and verified by the authors. 

\section{Background on Grassmannians, tensor invariants, and webs}

\subsection{Grassmannians}
Let $\Gr(k,n)$ denote the
Grassmannian of 
$k$-dimensional subspaces
in~$\CC^n$. Any element of $\Gr(k,n)$  can be represented (non-uniquely) as the row span of a full rank $k \times n$ matrix $C$.
Given such a matrix and a $k$-element subset $J$ of $[n]=\{1,2,\dots,n\}$,
we let $P_J(C)$ denote the determinant of the $k \times k$ submatrix of $C$
located in columns $J$; this is called a \emph{Pl\"ucker coordinate}.
The Grassmannian $\Gr(k,n)$
can be embedded into projective space of dimension $\binom{n}{k}-1$
via the Pl\"ucker embedding $C \mapsto (P_J(C))_{J\in {[n] \choose k}}$.

Sometimes it will be convenient to use Pl\"ucker coordinates associated to 
\emph{sequences} $j_1,\dots,j_k$ of $[n]$, which are the determinants of the $k\times k$ matrices whose columns are (in order) the columns of $C$ specified by the sequence.  With this convention, the Pl\"ucker coordinates are alternating in the indices, e.g. 
$P_{1,2,3}=P_{2,3,1}=P_{3,1,2}=-P_{1,3,2}=-P_{2,1,3}=-P_{3,2,1}.$

When we have a $k$-element subset $J=\{j_1<j_2<\cdots<j_k\} \subset [n]$, the Pl\"ucker coordinate associated to that subset is understood to be the Pl\"ucker coordinate associated to the sequence of its elements written in increasing order, i.e. $P_J = P_{j_1, \ldots, j_k}$. 

Let $\widehat{\Gr}(k,n)$ denote the affine cone over ${\Gr}(k,n)$
in the Pl\"ucker embedding.
The homogeneous coordinate ring
 $\C[\widehat{\Gr}(k,n)]$
of~$\Gr(k,n)$
is generated by the Pl\"ucker coordinates~$P_J$,
where $J$ ranges over all elements of ${[n] \choose k}$.
The Pl\"ucker coordinates satisfy the \emph{Grassmann-Pl\"ucker relations}.

It was shown by Scott \cite{scott} that homogeneous coordinate rings of Grassmannians carry a cluster structure. In this paper, when we refer to cluster variables for the Grassmannian $\Gr(k,n)$, we always mean cluster variables in the homogeneous coordinate ring $\C[\widehat{\Gr}(k,n)]$.

\subsection{Invariant rings}
The homogeneous coordinate ring of the Grassmannian is an instance of an $\SL_k$-invariant polynomial ring; such rings have been studied since the early days of classical invariant theory, see for example \cite{WeylClassicalGroup} and references in \cite[Section 1]{FP16}. We follow \cite{FP16} to recall some key definitions and results. 

Fix $V$ a $k$-dimensional complex vector space, and denote $V^*$ as its dual space. In this manuscript, we focus on the cases of $k=3$ and $k=4$. We refer to elements in $V$ as \emph{vectors} and elements in $V^*$ as \emph{covectors}. A \emph{tensor} $T$ of \emph{type $(a,b)$}  is an element in $(V)^{\otimes a} \otimes (V^*)^{\otimes b}$, or alternatively, a multilinear map $T: (V^*)^a \times (V)^b \rightarrow \C$. 

The special linear group $\SL(V)$ acts on $V$ and $V^*$ naturally, so we can consider the $\SL_k$-invariant polynomial ring $$R_{a,b}(V) := \C[(V^*)^a \times (V)^b]^{\SL(V)}.$$ Note that $$R_{0,n}(V) = \C[(V)^n]^{\SL(V)} \cong \C [\widehat{\Gr}(k,n)]$$ for $n \geq k$ is the homogeneous coordinate ring of the Grassmannian $\Gr(k,n)$.

The First Fundamental Theorem of invariant theory implies that there are three basic $\SL(V)$-invariant tensors that build up a invariant polynomial ring $R_{a,b}(V)$: the volume tensor, the dual volume tensor, and the identity tensor. These three tensors can be expressed diagrammatically as white vertices, black vertices, and edges, and a general $\SL(V)$-invariant tensor can be expressed as a linear combination of \emph{tensor diagrams} built by them, drawn in a disk. When a tensor diagram is planar, we call it a \emph{web}. We refer readers to, for example, \cite{Temperley-Lieb, Kuperberg_1996, CKM, FP16}, for the theory of tensor diagram calculus.

We use $D$ to denote a tensor diagram (which is a graph) and $[D]$ to denote the corresponding $\SL_k$ tensor invariant. 

\subsection{$\SL_3$ tensor diagrams and webs} \label{SL_3 webs}

\begin{definition} (cf. \cite{Kuperberg_1996} and \cite[Definition 4.1]{FP16}) \label{SL3 web def}
An \emph{$\SL_3$ tensor diagram} is a finite bipartite graph $D$ drawn in a disk, where the vertex set is partitioned into boundary vertices and internal vertices, such that: 
\begin{enumerate}
    \item each internal vertex is trivalent and drawn inside the disk;
    \item each boundary vertex is drawn on the boundary of the disk;
    \item for each internal vertex, there is a fixed cyclic order on the edges incident to it.
\end{enumerate}

Note that drawing the bipartite graph $D$ in a disk induces a cyclic ordering of boundary vertices. We always label boundary vertices clockwise.

When the tensor diagram is planar, we call it an \emph{$\SL_3$ web}, and we call the drawing in the disk an \emph{embedding}. 

An internal white vertex represents a volume form, i.e. an element in $\Lambda^3 V^*$. An internal black vertex represents a dual volume form, i.e. an element in $\Lambda^3 V$. A boundary white vertex represents a covector going into the web. A boundary black vertex represents a vector going into the web.

The cyclic word over the alphabet $\{\text{black, white} \}$ of boundary vertices is called the \emph{signature} of the tensor diagram, and it represents which invariant ring $R_{a,b}$ the tensor invariant is in, see \cite[Section 1]{FP16}. In the case of $\Gr(3,n)$ tensor diagrams, all boundary vertices are black.
\end{definition}

\begin{definition} \label{tree web def}
    We call an $\SL_3$ web $D$ an \emph{$\SL_3$ tree web} if the following two conditions hold: \begin{enumerate}
        \item the underlying graph of $D$ has no cycles and is connected;
        \item all boundary vertices are univalent (having degree 1).
    \end{enumerate}
\end{definition}

\begin{remark} \label{boundary univalence rem}
    Fomin-Pylyavskyy also defined \emph{tree diagrams} in \cite[Section 10]{FP16}. We emphasize that they are different from tree webs: they are (not necessarily planar) diagrams that are trees after \emph{unclasping}. \emph{Unclasping} takes a boundary vertex $\beta$ of degree $t>1$ to $t$ copies of univalent boundary vertices $\beta_1, ..., \beta_t$, each is incident to one edge that was incident to $\beta$ before. 

    Although not explicitly stated in their paper, Fomin-Pylyavskyy's definition of a tree web requires it to have univalent boundary vertices (the same as in our Definition \ref{tree web def}).
    Their results say that planar tree webs are irreducible \cite[Lemma 12.2]{FP16} and are cluster variables \cite[Corollary 8.10]{FP16}. These would not be true if we don't require univalent boundary vertices: for example, the product of two Pl\"uckers $P_{123}P_{145}$ is not irreducible but is a planar tree with boundary vertex $v_1$ having degree 2.
\end{remark}

\begin{remark} \label{read off SL3 web rem}
Let $V$ be a 3-dimensional $\C$-vector space. When evaluating $\Gr(3,n)$ tree webs, we can choose an internal vertex as the root and orient all edges toward it to read off the tensor invariant in the following way \cite[Section 3.2]{RSV}: 

 Evaluating $v_1, v_2, v_3$ (ordered clockwise) at a white vertex is taking the determinant of the three vectors. Evaluating $c_1, c_2, c_3$ at a black vertex is taking $\det^* $ of the three covectors, where $\langle \det, \det^* \rangle = 1$ and $\langle \cdot, \cdot \rangle$ is the natural pairing between a vector space and its dual. 

A non-root internal white vertex takes in two vectors $v_1, v_2$ and outputs a covector $f$ (here $v_1, v_2$ and the covector are ordered clockwise), where $ f: V \rightarrow \C$ by $x \mapsto \det(v_1, v_2, x)$. Note that this is the cross product.
    
Similarly, a non-root internal black vertex is a map $$b: V^* \times V^* \rightarrow V \qquad (c_1, c_2) \mapsto {\det}^*(c_1, c_2, -).$$
\end{remark}

\begin{example}
Consider the following two $\Gr(3,7)$ webs.

  \begin{figure}[htbp]
    \centering
    \includegraphics[width=0.6\textwidth]{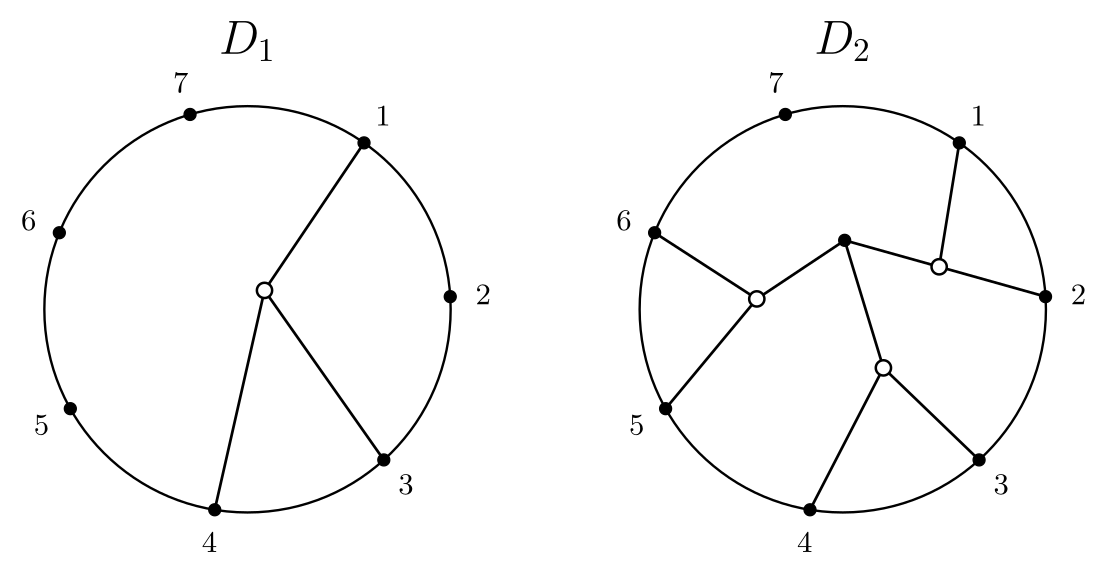}
    \caption{Two examples in $\C[\widehat{\Gr}(3,7)]$.}
    \label{two examples in Gr(3,7) figure}
\end{figure}

Evaluating at the white vertex in $D_1$, we have $$[D_1] = \det(v_1, v_3, v_4) = P_{134}.$$ We call $D_1$ a \emph{tripod}. In a $\Gr(3,n)$ tensor diagram, a tripod is a Pl\"ucker coordinate.

Evaluating at the internal black vertex in $D_2$, we have 
\begin{align*} 
[D_2] &=  {\det}^*(v_1 \times v_2, v_3 \times v_4, v_5 \times v_6) \\
&= \det(v_1, v_2, v_4)\det(v_3, v_5, v_6) - \det(v_1, v_2, v_3)\det(v_4, v_5, v_6)  \\ 
&= P_{124}P_{356} - P_{123}P_{456}.
\end{align*}
\end{example}

\subsection{$\SL_4$ tensor diagrams and webs}

Numerous authors have made progress toward higher $\SL_k$ webs; we refer readers to \cite[Section 1.3]{HourglassPlabic} for a detailed account. We will use the $\SL_k$ diagrammatic calculus that was first conjectured by Kim \cite{Kim2003} and Morrison \cite{MorrisonThesis} and later proved by Cautis--Kamnitzer--Morrison \cite{CKM}, with a different proof given by Fraser--Lam--Le \cite{FLL}.

We follow the conventions in \cite{FLL, Fraser2} to give a quick introduction to $\SL_4$ webs, mostly parallel to the case of $\SL_3$ in \cref{SL_3 webs}.

\begin{definition} (cf. \cite[Section 9.3]{Fraser2}) \label{Gr(4,n) web def}
    An \emph{$\SL_4$ tensor diagram} is a finite bipartite graph $D$ drawn in a disk, such that:
    \begin{enumerate}
        \item each edge is either of multiplicity 1 or 2, with the exception that edges connecting to a boundary vertex must be multiplicity 1;
        \item each internal vertex is of degree 4, counting edge multiplicity;
        \item no internal vertex is incident to exactly two edges both of multiplicity $2$.
    \end{enumerate}
Condition (3) is achieved by the two-valent vertex removal relation \cite[Section 6.1]{FLL}.

An \emph{$\SL_4$ web} is a planar $\SL_4$ tensor diagram. A \emph{$\Gr(4,n)$ tree web} is an $\SL_4$ web such that it has $n$ boundary vertices all black, is also a tree, and each boundary vertex has degree 1.
\end{definition}

\begin{remark} 
   Fix $V$ a 4-dimensional complex vector space. Similar to \cref{read off SL3 web rem}, we can choose an internal vertex as the root and orient all edges toward it to read off the tensor invariant in the following way \cite[Section 3.3]{RSV}: 

   A boundary black vertex is a vector. A boundary white vertex is a covector.
   
An internal black vertex is a contraction by $\epsilon^{abcd}$. That is, for a non-root black vertex, one of the following occurs:
    \begin{enumerate}
        \item It has three incoming single edges and one outgoing single edge, which represents a map $\mathcal{B}_1: \Lambda^3 V^* \rightarrow V$ by $\  A \wedge B \wedge C \mapsto Z$ where $(Z)^d  = \epsilon^{abcd} A_a B_b C_c$;
        \item It has two incoming single edges and one outgoing double edge, representing a map $\mathcal{B}_2: \Lambda^2 V^* \rightarrow \Lambda^2 V$ by $\  A \wedge B \mapsto Z$ where $(Z)^{cd}  = \frac{1}{2} \epsilon^{abcd} (A \wedge B)_{ab}$;
        \item It has two incoming edges, one single and one double, and there is one outgoing single edge, representing a map $\mathcal{B}_3: \Lambda^2 V^* \otimes V^* \rightarrow V$ by $\  (A \wedge B) \otimes C \mapsto Z$, where $(Z)^d  =  \frac{1}{2}\epsilon^{abcd} (A \wedge B)_{ab} C_c.$
\end{enumerate}
Here, we read edges incident to a vertex in clockwise order and take the outgoing edge $Z$ to be the last one in the ordering.

If a black vertex is the root, either:
 \begin{enumerate}
        \item It has four single edges (covectors) $A,B,C,D$ coming in, and evaluating the web is $\epsilon^{abcd}A_a B_b C_c D_d$; or
        \item It has one double edge $A \wedge B$ and two single edges $C,D$ coming in, and evaluating the web is $\frac{1}{2}\epsilon^{abcd} (A \wedge B)_{ab} C_c D_d$.
\end{enumerate}

At the root vertex, the evaluation is determined only up to a sign, depending on the chosen ordering of the edges.

An internal white vertex is a contraction by $\epsilon_{abcd}$, which is analogous to above but with all upper and lower indices interchanged. 
\end{remark}

\subsection{Tensor diagram calculus}

\begin{definition}
Let $D_1$ and $D_2$ be two tensor diagrams with the same boundary signature. Define the \emph{addition} $D_1 + D_2$ to be the formal sum of the two tensor diagrams. Define the \emph{multiplication} $D_1 \cdot D_2$ to be the superposition of the two tensor diagrams, identifying the boundary vertices. Multiplication may create crossings. 

\emph{Skein relations} are local equivalence relations between tensor diagrams that allow us to remove crossings, multiple edges, internal 4-cycles, loops, bigons, and boundary degeneracies, while preserving the corresponding tenor invariants. If $D \sim_{\text{skein}} \sum_i \lambda_i D_i$, then $[D] =\sum_i \lambda_i [D_i]$ as tensor invariants. See \cite{Kuperberg_1996} for the case of $\SL_3$, \cite{CKM} for the case of $\SL_4$, and, for example, \cite[Figure 5]{RSV} for a convenient list for both cases.
\end{definition}

\begin{remark}
    In \cite{Kuperberg_1996}, Kuperberg showed that \emph{non-elliptic} $\SL_3$ webs (obtained by skein relations) form a basis for the $\SL_3$ invariant ring. In contrast, the $\SL_4$ webs used in this paper only form a spanning set rather than a basis. We note, however, that web bases for higher $\SL_k$ are known \cite{Westbury_2011, FONTAINE20122792}, and, more recently, a rotation-invariant web basis for $\SL_4$ is constructed in \cite{HourglassPlabic}. We will not discuss these bases in this manuscript. 
\end{remark}

\section{Background on cluster algebras}

Cluster algebras were introduced by Fomin and Zelevinsky in \cite{FZ1}; see \cite{FWZ} for 
an introduction.
We give a quick definition of cluster algebras from quivers here.

\begin{definition}
[\emph{Quiver}]
	A \emph{quiver} $Q$ is an oriented graph given by a finite set of
	vertices $Q_0$, a finite set of arrows $Q_1$, and two maps
$s: Q_1 \to Q_0$ and $t: Q_1 \to Q_0$ taking an arrow to its source and target, respectively.
\end{definition}

\begin{definition}
[\emph{Quiver Mutation}]
Let $Q$ be a quiver without loops or $2$-cycles.
Let $k$ be a vertex of $Q$.
Following \cite{FZ1}, we define the
\emph{mutated quiver} $\mu_k(Q)$ as follows:
it has the same set of vertices as $Q$,
and its set of arrows is obtained by the following procedure:
\begin{enumerate}
\item for each subquiver $i \to k \to j$, add a new arrow $i \to j$;
\item reverse all allows with source or target $k$;
\item remove the arrows in a maximal set of pairwise
disjoint $2$-cycles.
\end{enumerate}
\end{definition}

It is not hard to check that mutation is an involution, that is,
$\mu_k^2(Q) = Q$ for each vertex $k$.

\begin{definition}
[\emph{Labeled seeds}]
\label{def:seed0}
Choose $s\geq r$ positive integers.
Let $\Fcal$ be an \emph{ambient field}
of rational functions
in $r$ independent
variables
over
$\CC(x_{r+1},\dots,x_s)$.
A \emph{labeled seed} in~$\Fcal$ is
a pair $(\txx, Q)$, where
\begin{itemize}
\item
$\txx = (x_1, \dots, x_s)$ forms a free generating
set for
$\Fcal$,
and
\item
$Q$ is a quiver on vertices
$1, 2, \dots,r, r+1, \dots, s$,
whose vertices $1,2, \dots, r$ are called
\emph{mutable}, and whose vertices $r+1,\dots, s$ are called \emph{frozen}.
\end{itemize}
We call~$\txx$  the (labeled)
\emph{extended cluster} of a labeled seed $(\txx, Q)$.
The variables $\{x_1,\dots,x_r\}$ are called \emph{cluster
variables}, and the variables $c=\{x_{r+1},\dots,x_s\}$ are called
	\emph{frozen} (or \emph{coefficient variables}).
We let $\PP$ denote the group of Laurent monomials in the frozen variables, which we call the \emph{frozen group}.
\end{definition}

\begin{definition}
[\emph{Seed mutations}]
\label{def:seed-mutation0}
Let $(\txx, Q)$ be a labeled seed in $\Fcal$,
and let $k \in \{1,\dots,r\}$.
The \emph{seed mutation} $\mu_k$ in direction~$k$ transforms
$(\txx, Q)$ into the labeled seed
$\mu_k(\txx,  Q)=(\txx', \mu_k(Q))$, where the cluster
$\txx'=(x'_1,\dots,x'_s)$ is defined as follows:
$x_j'=x_j$ for $j\neq k$,
whereas $x'_k \in \Fcal$ is determined
by the \emph{exchange relation}
\begin{equation}
\label{exchange relation0}
x'_k\ x_k =
 \ \prod_{\substack{\alpha\in Q_1 \\ s(\alpha)=k}} x_{t(\alpha)}
+ \ \prod_{\substack{\alpha\in Q_1 \\ t(\alpha)=k}} x_{s(\alpha)} \, .
\end{equation}
\end{definition}

Note that arrows between two frozen vertices of a quiver do not
affect seed mutation; therefore we often omit
arrows between two frozen vertices.

\begin{definition}
[\emph{Cluster algebra}]
\label{def:cluster-algebra0}
Let $(\txx, Q)$ be a labeled seed in $\Fcal$
and let $\Xcal$ denote the set of all cluster variables we can obtain by performing arbitrary sequences of mutations to the initial labeled seed.

Let $\CC[c^{\pm 1}]$ be the \emph{ground ring} consisting
of Laurent polynomials in the frozen variables. The
\emph{cluster algebra} $\Acal$ is the $\CC[c^{\pm 1}]$-subalgebra of the ambient field $\Fcal$
generated by all cluster variables, 
with coefficients which are Laurent polynomials
in the frozen variables: $\Acal = \CC[c^{\pm 1}] [\Xcal]$.
We denote $\Acal = \Acal(\txx,  Q)$.

We say that $\Acal$ has \emph{rank $r$} because each cluster contains
$r$ cluster variables. Cluster (or frozen) variables that belong to a common cluster are said to be \emph{compatible}.
\end{definition}

\begin{remark}\label{rmk:dif-ground-ring}
Another common convention is to choose the ring 
	$\CC[c]$ of polynomials in the frozen variables as the ground ring,
and define the cluster algebra 
as $\overline{\Acal} := \CC[c] [\Xcal]$.
\end{remark}

\subsection{The cluster structure on the Grassmannian}\label{sec:Grass}

The Grassmannian
has a cluster structure \cite{scott},
which we recall here, following the exposition of \cite{FW6}.
We also discuss several operations on the Grassmannian which are compatible
with the cluster structure.

Given a $k$-element subset $J=\{j_1,j_2, \dots, j_k\}\subset\{1,2,\dots, n\}$
and a positive integer $i$,
we define
\[
(J+i)\bmod n: = \{j_1+i, j_2+i, \dots, j_k+i\},
\]
where the sums are taken modulo~$n$.
We often write $J+i$ if the $n$ is clear
from context.  

The set of frozen variables for the cluster structure on the Grassmannian
consists of the $n$ \emph{cyclically consecutive} 
Pl\"ucker coordinates 
\begin{equation}\label{eq:Grfacets}
    P_{\{i,i+1,\ldots,i+k-1\}\bmod n}
    \quad\text{for } 1\leq i\leq n. 
\end{equation}

For example, the frozen variables  for $\Gr(3,6)$ are the Pl\"ucker coordinates
\[      
P_{123}, P_{234}, P_{345}, P_{456}, P_{156}, P_{126}.
\]

A particularly nice seed for the Grassmannian cluster structure is 
the \emph{rectangles seed} $\Sigma_{k,n}$.

\begin{definition}[Rectangles seed $\Sigma_{k,n}$] \label{def:rectangles}
We construct a quiver $Q_{k,n}$ whose vertices are labeled by the
rectangles contained in an $k \times (n-k)$ rectangle~$R$,
including the empty rectangle~$\varnothing$.
The frozen vertices of $Q_{k,n}$ are labeled by
the rectangles of sizes $k\times j$ (with $1\le j\le n-k$),
rectangles of sizes $i\times (n-k)$ (with $1\le i\le k$), and the empty rectangle.
The arrows from an $i\times j$ rectangle go to rectangles of sizes $i\times (j+1)$,
$(i+1)\times j$, and $(i-1)\times (j-1)$ (assuming those rectangles have nonzero
dimensions, fit inside~$R$, and the arrow does not connect two frozen vertices).
There is also an arrow from the frozen vertex labeled by~$\varnothing$
to the vertex labeled by the $1\times 1$ rectangle.
See  \Cref{fig:G36-Le-quiver}, left.

We map each rectangle $r$ contained in the $k \times (n-k)$ rectangle
$R$ to an $k$-element subset of
$\{1,2,\dots, n\}$ (representing a Pl\"ucker coordinate), as follows.
We justify $r$
so that its upper left corner coincides with the upper left corner
of $R$.  There is a path of length $n$ from the northeast corner of
$R$ to the southwest corner of $R$ which cuts out the
smaller rectangle $r$;  we label the steps of this path from $1$ to~$n$.
We then map $r$ to the set of labels $J(r)$ of the vertical steps on this path.
This construction allows us to assign to each vertex of the quiver $Q_{k,n}$
a particular Pl\"ucker coordinate.
We set
\begin{equation*}
\txx^{k,n} =
	\{P_{J(r)} \ \vert \ \text{ $r$ is a rectangle contained in an
$k \times (n-k)$ rectangle}\} ,
\end{equation*}
and then define the \emph{rectangles seed} $\Sigma_{k,n}=(\txx^{k,n}, Q_{k,n})$.
\end{definition}
The case $k=3$ and $n=6$ is shown in \cref{fig:G36-Le-quiver}.

\begin{figure}[t] 
\vspace{1em}
\begin{center}
\setlength{\unitlength}{2pt}
\hspace{-1.5cm}
\ytableausetup{boxsize=.75em}
\begin{picture}(60,62)(0,-6)
\put(20,20){\makebox(0,0){${\ydiagram{1,1}}$}}
\put(20,40){\makebox(0,0){${\ydiagram{1}}$}}
\put(40,20){\makebox(0,0){${\ydiagram{2,2}}$}}
\put(40,40){\makebox(0,0){${\ydiagram{2}}$}}
\put(60,20){\makebox(0,0){$\boxed{\ydiagram{3,3}}$}}
\put(60,40){\makebox(0,0){$\boxed{\ydiagram{3}}$}}
\put(20,-1){\makebox(0,0){$\boxed{\ydiagram{1,1,1}}$}}
\put(40,-1){\makebox(0,0){$\boxed{\ydiagram{2,2,2}}$}}
\put(60,-1){\makebox(0,0){$\boxed{\ydiagram{3,3,3}}$}}
\put(5,55){\makebox(0,0){$\boxed{\scriptstyle\varnothing}$}}

\put(40,36){\vector(0,-1){12}}
\put(20,36){\vector(0,-1){12}}
\put(40,16){\vector(0,-1){12}}
\put(20,16){\vector(0,-1){12}}

\put(26,20){\vector(1,0){8}}
\put(46,40){\vector(1,0){8}}

\put(26,40){\vector(1,0){8}}

\put(46,20){\vector(1,0){8}}

\put(10,50){\vector(1,-1){7}}

\put(36, 4){\vector(-1,1){12}}
\put(54, 6){\vector(-1,1){10}}
\put(36, 24){\vector(-1,1){12}}
\put(54, 26){\vector(-1,1){10}}
\end{picture}
\ytableausetup{boxsize=normal}
\hspace{1cm}
\begin{picture}(60,62)(0,-6)
        \put(20,20){\makebox(0,0){$346$}}
        \put(20,40){\makebox(0,0){$356$}}
        \put(40,20){\makebox(0,0){${236}$}}
        \put(40,40){\makebox(0,0){${256}$}}
        \put(60,20){\makebox(0,0){$\boxed{126}$}}
        \put(60,40){\makebox(0,0){$\boxed{156}$}}
        \put(20,-1){\makebox(0,0){$\boxed{345}$}}
        \put(40,-1){\makebox(0,0){$\boxed{234}$}}
        \put(60,-1){\makebox(0,0){$\boxed{123}$}}

        \put(5,55){\makebox(0,0){$\boxed{456}$}}

        \put(40,36){\vector(0,-1){12}}
        \put(20,36){\vector(0,-1){12}}
        \put(40,16){\vector(0,-1){12}}
        \put(20,16){\vector(0,-1){12}}

        \put(26,20){\vector(1,0){8}}
        \put(46,40){\vector(1,0){7}}

        \put(26,40){\vector(1,0){8}}

        \put(46,20){\vector(1,0){7}}

        \put(10,50){\vector(1,-1){7}}

        \put(36, 4){\vector(-1,1){12}}
        \put(54, 6){\vector(-1,1){10}}

        \put(36, 24){\vector(-1,1){12}}
        \put(54, 26){\vector(-1,1){10}}
\end{picture}
\hspace{1cm}
\begin{picture}(60,62)(0,-6)
        \put(20,20){\makebox(0,0){$145$}}
        \put(20,40){\makebox(0,0){$146$}}
        \put(40,20){\makebox(0,0){${134}$}}
        \put(40,40){\makebox(0,0){${136}$}}
        \put(60,20){\makebox(0,0){$\boxed{123}$}}
        \put(60,40){\makebox(0,0){$\boxed{126}$}}
        \put(20,-1){\makebox(0,0){$\boxed{456}$}}
        \put(40,-1){\makebox(0,0){$\boxed{345}$}}
        \put(60,-1){\makebox(0,0){$\boxed{234}$}}

        \put(5,55){\makebox(0,0){$\boxed{156}$}}

        \put(40,36){\vector(0,-1){12}}
        \put(20,36){\vector(0,-1){12}}
        \put(40,16){\vector(0,-1){12}}
        \put(20,16){\vector(0,-1){12}}

        \put(26,20){\vector(1,0){8}}
        \put(46,40){\vector(1,0){7}}

        \put(26,40){\vector(1,0){8}}

        \put(46,20){\vector(1,0){7}}

        \put(10,50){\vector(1,-1){7}}

        \put(36, 4){\vector(-1,1){12}}
        \put(54, 6){\vector(-1,1){10}}

        \put(36, 24){\vector(-1,1){12}}
        \put(54, 26){\vector(-1,1){10}}
\end{picture}
\end{center}
\vspace{1em}
\caption{
Left: the quiver $Q_{3,6}$ with vertices labeled by rectangles contained in a $3 \times 3$ rectangle.
Middle: the rectangles seed $\Sigma_{3,6}$, where we identify $3$-element subsets of $[6]$ with Pl\"ucker coordinates. Frozen variables are boxed. Right: the cyclically shifted rectangles seed $\Sigma_{3,6}^1=\cyc(\Sigma_{3,6})$.
}
\label{fig:G36-Le-quiver}
\end{figure}

The rectangles seed gives rise to a cluster
structure on the (complex) Grassmannian.

\begin{theorem}[\cite{scott}]\label{thm:scott}
The coordinate ring $\C[\widehat{\Gr}(k,n)]$ of the affine cone over
the Grassmannian equals the cluster algebra $\overline{\Acal}(\Sigma_{k,n})$ (see \Cref{rmk:dif-ground-ring}). 
Alternatively,
if we let 
${\Gr}(k,n)^\circ$ denote the open subset of the Grassmannian where the frozen variables
don't vanish, then 
the coordinate ring $\C[\widehat{\Gr}(k,n)^\circ]$ is the cluster algebra
 $\A(\Sigma_{k,n})$.
\end{theorem}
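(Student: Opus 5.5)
The plan is to prove \cref{thm:scott} by two inclusions: the cluster algebra sits inside the coordinate ring, and the coordinate ring sits inside the cluster algebra. Write $R=\C[\widehat{\Gr}(k,n)]$. This ring is a graded normal domain; in fact it is a UFD, since the Plücker ideal is prime and the class group of the Grassmannian is generated by the hyperplane class. Its fraction field has transcendence degree $k(n-k)+1$, which is exactly the number of vertices of $Q_{k,n}$.

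\emph{Step 1: the initial extended cluster is a transcendence basis.} I first check that $\txx^{k,n}$ is algebraically independent and generates the fraction field. On the open cell where $P_{\varnothing}=P_{\{n-k+1,\dots,n\}}\neq 0$, write the point as a matrix $[A\mid I_k]$. Each $P_{J(r)}$ is then, up to sign, a minor of $A$ whose corner lies at the corner of $A$ and whose shape is the rectangle $r$. Such corner minors determine $A$ rationally, by induction on the size of the rectangle, so the initial cluster generates the function field and hence is a transcendence basis.

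\emph{Step 2: $\overline{\Acal}(\Sigma_{k,n})\subseteq R$.} Here I use the starfish lemma (Fomin--Williams--Zelevinsky): if $R$ is a normal Noetherian domain, the initial cluster and frozen variables lie in $R$ and are pairwise coprime, and each one-step mutation $x_i'$ lies in $R$ and is coprime to $x_i$, then every cluster variable lies in $R$. The checks are as follows.
\begin{itemize}
\item For each mutable rectangle $r$, the exchange relation read off from $Q_{k,n}$ is a three-term Plücker relation $P_{Sac}P_{Sbd}=P_{Sab}P_{Scd}+P_{Sad}P_{Sbc}$. So $x_i'$ is again a Plücker coordinate, namely the one for the rectangle modified by one "square move", and in particular lies in $R$.
\item Every Plücker coordinate is irreducible in $R$, because it has degree one in the grading and $R_0=\C$. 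Hence coprimality of the relevant pairs reduces to their being non-associate, and distinct Plücker coordinates are non-associate.
\end{itemize}
Since frozen variables are in $R$, this gives $\overline{\Acal}\subseteq R$.

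\emph{Step 3: $R\subseteq\overline{\Acal}$, which I expect to be the main obstacle.} Because $R$ is generated by the Plücker coordinates, it suffices to show that every $P_J$ is a cluster or frozen variable. I would realize $\Sigma_{k,n}$ as the seed of Postnikov's Le-diagram plabic graph for the top cell, with face labels given by the $J(r)$. Then I would use that any two reduced plabic graphs for the top cell are connected by moves, with square moves corresponding to mutations. Finally, for each $J$ I would exhibit a reduced plabic graph having $J$ as a face label, for instance by constructing a maximal weakly separated collection containing $J$, using the Oh--Postnikov--Speyer purity theorem. Doing this directly and combinatorially is the hard part.

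\emph{Step 4: the localized statement.} Inverting the frozen variables on both sides of $\overline{\Acal}(\Sigma_{k,n})=R$ gives $\Acal(\Sigma_{k,n})=\C[\widehat{\Gr}(k,n)^\circ]$.
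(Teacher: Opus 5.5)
The paper gives no proof of this theorem (it is quoted from Scott), so your argument has to stand on its own. Step 2 rests on a false claim: the exchange relations of $\Sigma_{k,n}$ are not all three-term Pl\"ucker relations once $k\ge 3$ and $n-k\ge 3$. By the definition of $Q_{k,n}$, a mutable $i\times j$ rectangle with $2\le i\le k-1$ and $2\le j\le n-k-1$ has six neighbours. It has arrows to $i\times(j+1)$, $(i+1)\times j$, $(i-1)\times(j-1)$, and arrows from $i\times(j-1)$, $(i-1)\times j$, $(i+1)\times(j+1)$. So both monomials of its exchange relation are cubic, and the new variable has degree $2$. The smallest instance is the vertex $P_{236}$ of $\Sigma_{3,6}$, where $P_{236}\,x'=P_{126}P_{234}P_{356}+P_{123}P_{256}P_{346}$. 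The relations $P_{124}P_{236}=P_{123}P_{246}+P_{126}P_{234}$ and $P_{246}P_{356}=P_{236}P_{456}+P_{256}P_{346}$ then give $x'=P_{124}P_{356}-P_{123}P_{456}$, which is exactly the invariant $[D_2]$ from the paper's first example. So in all these directions the starfish hypotheses ``$x_i'\in R$'' and ``$x_i'$ coprime to $x_i$'' are never verified. Your coprimality argument only handles degree-one elements, so it says nothing about them.

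You can repair this in two ways. The direct way: at such a vertex the seven Pl\"ucker coordinates involved all contain a common $(k-3)$-subset $S$. On the six remaining indices $a<b<c<d<e<f$ the configuration is precisely the $\Gr(3,6)$ one above, with the vertex itself equal to $P_{S\cup\{b,c,f\}}$. The same two three-term relations with $S$ adjoined give $x_i'=P_{S\cup\{a,b,d\}}P_{S\cup\{c,e,f\}}-P_{S\cup\{a,b,c\}}P_{S\cup\{d,e,f\}}\in R$. Since $x_i$ is prime, coprimality follows once you exhibit one point where $x_i=0\neq x_i'$. The alternative: do Step 3 first and replace the starfish lemma by the normality argument behind it. Every cluster variable is a Laurent polynomial, polynomial in the frozen variables, in every plabic (all-Pl\"ucker) cluster. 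For each height-one prime $(p)$ of the UFD $R$, some such cluster has no mutable member in $(p)$. The only nontrivial case is $p\sim P_J$ with $J$ not a cyclic interval; there, pick cyclically ordered $a<b<c<d$ with $a,c\in J$ and $b,d\notin J$. Then $J$ is not weakly separated from $J\setminus\{a,c\}\cup\{b,d\}$, so a maximal weakly separated collection containing the latter omits $J$. Hence every cluster variable lies in $\bigcap_{p}R_{(p)}=R$. Two smaller points. In Step 1, $P_{J(r)}$ is not a minor ``of shape $r$'', since minors are square. Up to sign it is the solid $\min(i,j)\times\min(i,j)$ minor of $A$ (columns reversed) whose lower-right cell is $(i,j)$; these initial minors do determine $A$, so your conclusion survives. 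And Step 3 is not the hard part once you cite Postnikov's move-connectedness and Oh--Postnikov--Speyer purity, because $\{J\}$ trivially extends to a maximal weakly separated collection.
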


Whenever we refer to ``the cluster structure for'' or ``a seed for'' $\widehat{\Gr}(k,n), \widehat{\Gr}(k,n)^{\circ}$ or their coordinate rings, we are refering to the cluster algebra, respectively a seed in the cluster algebra, in \Cref{thm:scott}. Note in this context, we are discussing the complex Grassmannian.

\subsection{Operations compatible with the cluster structure on the Grassmannian}\label{sec:operation}

\begin{proposition}\label{cor:cycrefl}\cite[Corollary 4.18]{even2023cluster}
Let  $\cyc^*: \C[\widehat{\Gr}(k,n)] \to \C[\widehat{\Gr}(k,n)]$ be the map which sends the Pl\"ucker coordinate labeled $J$ to $(J+1) \bmod n$,
and let $\refl^*: \C[\widehat{\Gr}(k,n)] \to \C[\widehat{\Gr}(k,n)]$ be the map which sends the Pl\"ucker coordinate 
 labeled
  $J=\{j_1 < \dots < j_k\}$ in $Q_{k,n}$ to the Pl\"ucker coordinate labeled 
  $\{n-j_k+1,\dots ,n-j_2+1, n-j_1+1\}.$ 
       
        The maps $\cyc^*, \refl^*: \C[\widehat{\Gr}(k,n)] \to \C[\widehat{\Gr}(k,n)]$ take cluster variables to cluster variables and preserve compatibility and exchange relations.
\end{proposition}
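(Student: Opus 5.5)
The plan is to show that each of $\cyc^*$ and $\refl^*$ carries the rectangles seed $\Sigma_{k,n}$ to another seed of the same cluster algebra, up to the expected quiver isomorphism or anti-isomorphism. Once that is known, the statement follows formally. Cluster variables are the variables in seeds reached from $\Sigma_{k,n}$ by mutation, and a ring automorphism that sends a seed to a seed commutes with mutation. Hence it maps the whole exchange graph into itself, preserving clusters (compatibility) and exchange relations. Quiver reversal is harmless here: the exchange relation is symmetric in its two monomials, so an anti-isomorphism still yields the same exchange relations.

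First I check that both maps are well-defined ring automorphisms of $\C[\widehat{\Gr}(k,n)]$. For $\cyc^*$, take it to be induced by the linear map on $k\times n$ matrices sending columns $(v_1,\dots,v_n)$ to $(v_2,\dots,v_n,(-1)^{k-1}v_1)$. The sign makes the alternating Plücker coordinates go exactly to $P_{J+1}$, and Plücker relations are preserved because the map comes from a linear map on $\C^n$. For $\refl^*$, use column reversal $v_j\mapsto v_{n+1-j}$. This sends $P_J$ to $\pm P_{\refl(J)}$, with a global sign depending only on $k$. That sign can be absorbed by rescaling or by composing with $v_1\mapsto -v_1$ where necessary. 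Alternatively, note that a uniform scalar on all Plückers of a given degree does not affect the cluster combinatorics, since cluster variables are determined up to the frozen-free sign conventions.

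The second step is to identify the image seeds. Applying $\cyc^*$ to $\Sigma_{k,n}$ gives the cyclically shifted seed $\Sigma^1_{k,n}$ shown in \cref{fig:G36-Le-quiver}. Using Postnikov's plabic graph description, the rectangles seed is the seed of the Le-diagram plabic graph for the top cell. Every plabic graph for the top cell gives a seed of Scott's cluster structure (\cref{thm:scott}), and any two such graphs are connected by square moves, which correspond to mutations. Rotating the plabic graph by one boundary position gives another reduced plabic graph for the top cell, whose face labels are the shifted sets $J+1$ and whose quiver is the same. Therefore $\Sigma^1_{k,n}$ is mutation-equivalent to $\Sigma_{k,n}$. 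For $\refl^*$, reflecting the plabic graph reverses the boundary order and the planar orientation. This gives a reduced plabic graph with face labels $\refl(J)$ and the opposite quiver, which again represents a seed of the same cluster algebra.

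The main obstacle is justifying the plabic graph facts rather than merely invoking them: that the rotated or reflected graph is reduced and of the same type, that its quiver is computed by the same local rule, and above all that all reduced plabic graphs of the top cell are move-equivalent and that moves correspond to mutations (Postnikov together with Scott). In the paper's setting I would cite these results. Alternatively, I would exhibit an explicit mutation sequence from $\Sigma_{k,n}$ to $\Sigma^1_{k,n}$, namely the standard sequence mutating the mutable rectangles column by column. For reflection, I would observe that $\refl^*$ composed with a transpose duality $\Gr(k,n)\cong\Gr(n-k,n)$ matches the rectangles seed up to reversal.
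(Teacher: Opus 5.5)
Your proposal is correct and follows the standard route for this fact. The paper itself gives no proof; it simply imports the statement from \cite[Corollary 4.18]{even2023cluster}. The argument is: realize $\cyc^*$ and $\refl^*$ as ring automorphisms. Rotating or reflecting a reduced plabic graph for the top cell sends its seed to another such seed, with the same quiver for rotation and the opposite quiver for reflection. An automorphism carrying one seed to a seed commutes with every mutation, and quiver reversal does not change exchange relations.

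There are two small slips to fix.

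\textbf{The sign for $\refl^*$.} Composing with $v_1\mapsto -v_1$ does not remove the global sign $(-1)^{\binom{k}{2}}$ coming from column reversal, because it only flips those $P_J$ with $1\in J$. Instead, compose with the grading automorphism $f\mapsto (-1)^{\binom{k}{2}\deg f}f$. Equivalently, multiply on the left by some $g\in GL_k$ with $\det g=(-1)^{\binom{k}{2}}$. Your fallback remark that a uniform sign is irrelevant is also wrong. If $x$ is a cluster variable, then $-x$ is not one, since cluster variables are positive on the totally positive part of $\Gr(k,n)$. So the sign built into the definition of $\refl^*$ genuinely matters.

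\textbf{The face labels of the mirror image.} With the usual trip conventions, mirroring a plabic graph interchanges source and target face labelings. For the top cell these two labelings differ by a rotation by $k$. So the mirror image carries the labels $\refl(J)$ only up to a cyclic shift, not on the nose. This is harmless once the rotation case is established.
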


 \cref{lem:add-marker-embed-gr} gives an inclusion of Grassmannian cluster structures
 that will be useful in our proofs.
  \begin{lemma}\label[lemma]{lem:add-marker-embed-gr}\cite[Lemma 4.20]{even2023cluster}
          Let $T \subset [n]$. Choose $t \in T$ and set $S := T \setminus \{t\}$. Consider the natural inclusion $\iota: \C[\widehat{\Gr}(k, S)] \to \C[\widehat{\Gr}(k, T)]$ given by $P_I \mapsto P_I$. Then $\iota$ maps cluster variables to cluster variables and preserves compatibility and exchange relations.
         \end{lemma}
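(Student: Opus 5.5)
The plan is to reduce to the case $T=[n]$, $t=1$, $S=\{2,\dots,n\}$. Then I will show that the rectangles seed $\Sigma_{k,n}$ contains, after freezing one column, a copy of the rectangles seed for $\Gr(k,S)$ with the same Pl\"ucker labels and the same mutable quiver. Any mutation sequence in $\C[\widehat{\Gr}(k,S)]$ can then be replayed verbatim inside $\C[\widehat{\Gr}(k,T)]$.

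\emph{Reduction.} Relabel $T$ order-preservingly as $[n]$; the cluster structure on $\Gr(k,T)$ only depends on the cyclic order of $T$. By \cref{cor:cycrefl}, the map $\cyc^*$ and its powers are cluster automorphisms that preserve compatibility and exchange relations. I will apply a suitable power of $\cyc^*$ to both $\C[\widehat{\Gr}(k,S)]$ and $\C[\widehat{\Gr}(k,T)]$, which commutes with $\iota$, so that $t=1$. It then suffices to treat $S=\{2,\dots,n\}$, with the cluster structure on $\Gr(k,S)$ being $\Sigma_{k,n-1}$ transported along $i\mapsto i+1$.

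\emph{Locating the subseed.} In $\Sigma_{k,n}$, an $i\times j$ rectangle determines a lattice path from the northeast corner that first takes $n-k-j$ horizontal steps. Hence the label $1$ is a vertical step, i.e. $1\in J(r)$, exactly when $j=n-k$.

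For $j\le n-k-1$, the steps after the first are labeled $2,\dots,n$. They trace the path for the same rectangle inside a $k\times(n-k-1)$ box. So $J(r)$ equals the label of $r$ in $\Sigma_{k,n-1}$ shifted by one.

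The rectangles with $j\le n-k-1$, together with $\varnothing$, thus carry exactly the Pl\"ucker coordinates of the shifted $\Sigma_{k,n-1}$. Under this identification:
\begin{itemize}
\item the frozen vertices of $\Sigma_{k,n-1}$ are $\varnothing$, the $k\times j$ rectangles, and the $i\times(n-k-1)$ rectangles;
\item the $k\times j$ rectangles and $\varnothing$ are also frozen in $\Sigma_{k,n}$;
\item the column $j=n-k-1$ is mutable in $\Sigma_{k,n}$, and I freeze it.
\end{itemize}

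\emph{Checking the quiver.} I must verify that the arrows incident to the vertices that are mutable in $\Sigma_{k,n-1}$ (those with $i<k$ and $j\le n-k-2$) are the same in both quivers. By the arrow rule, an $i\times j$ rectangle points to $i\times(j+1)$, $(i+1)\times j$ and $(i-1)\times(j-1)$. It receives arrows from the reverse neighbors $i\times(j-1)$, $(i-1)\times j$, $(i+1)\times(j+1)$, and in the $1\times 1$ case also from $\varnothing$. All of these have column index at most $n-k-1$, so no such vertex is adjacent to column $n-k$. The arrow rule is also the same formula in both boxes.

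So freezing column $n-k-1$ in $\Sigma_{k,n}$ and discarding column $n-k$ gives a seed whose mutable part, and all arrows at mutable vertices, coincide with $\Sigma_{k,n-1}$. Mutations never involve arrows between frozen vertices, so any mutation sequence from $\Sigma_{k,n-1}$ can be performed in $\Sigma_{k,n}$ at the same vertices. Each step produces identical exchange relations and the same new variables, by induction on the length of the sequence.

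\emph{Conclusion.} Every cluster variable of $\C[\widehat{\Gr}(k,S)]$ therefore appears as a cluster variable of $\C[\widehat{\Gr}(k,T)]$, and every cluster maps into a cluster, so compatibility is preserved. Exchange relations are the same equalities of Pl\"ucker polynomials, since $\iota$ sends $P_I\mapsto P_I$. The frozen variables of $\Gr(k,S)$ map either to frozen variables of $\Gr(k,T)$ or to cluster variables (those in the column just frozen).

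The main obstacle is the bookkeeping in the second step: matching the path labeling and confirming that column $n-k$ is genuinely disconnected from the $\Sigma_{k,n-1}$-mutable vertices. The $\varnothing$ vertex and the corner cases $i=1$, $j=1$ need separate attention. Once that local check is done, the rest is formal.
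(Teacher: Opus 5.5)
The paper gives no proof of this lemma; it is quoted from \cite[Lemma 4.20]{even2023cluster}, so there is no in-paper argument to compare against. Your self-contained freezing argument is correct.

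With your conventions, an $i\times j$ rectangle has label
$J(r)=\{n-k-j+1,\dots,n-k-j+i\}\cup\{n-k+i+1,\dots,n\}$. This confirms two things:
\begin{enumerate}
\item $1\in J(r)$ exactly for the column $j=n-k$;
\item the remaining rectangles, together with $\varnothing$, carry the labels of $\Sigma_{k,n-1}$ shifted by one.
\end{enumerate}
Your neighborhood check is also right: vertices that are mutable in $\Sigma_{k,n-1}$ are never adjacent to column $n-k$.

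Two points deserve one more sentence each in a write-up.

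First, in the reduction, say explicitly that rotating $T$ induces on $S$ a rotation of $S$ in its own cyclic order. It is therefore a power of $\cyc^*$ for $\Gr(k,|S|)$, which is what makes it a cluster isomorphism onto $\C[\widehat{\Gr}(k,\{2,\dots,n\})]$. The map agrees with the rotation of $T$ on Pl\"ucker generators, so it commutes with $\iota$.

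Second, the replay induction should carry an explicit invariant. After any mutations at the set $M$ of $\Sigma_{k,n-1}$-mutable vertices:
\begin{enumerate}
\item the arrows incident to $M$ agree in the two quivers;
\item $M$ remains non-adjacent to column $n-k$.
\end{enumerate}
This is needed because $Q_{k,n}$ has extra arrows among the column-$(n-k-1)$ vertices, which are mutable there but frozen in your subseed. Mutation at $m\in M$ changes the arrows between $a$ and $b$ using only the old arrows between $a$ and $b$ and those at $m$. So the extra arrows affect only arrows between two subseed-frozen vertices, and never the exchange relations at $M$.

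Compared with the citation, your route proves the lemma using only the rectangles seed and \cref{cor:cycrefl}.
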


\subsection{Quasi-homomorphisms of cluster algebras}
	
In this section we define cluster quasi-homomorphisms, following \cite{Fraser} and \cite{Fraser2}.

	\begin{definition}[Exchange ratios] Given a cluster seed $\Sigma=((x_1,\dots,x_s), Q)$ 
		for a cluster algebra of rank $r\leq s$,
		and a mutable
		variable $x_i$ (so that $1 \leq i \leq r$), 
		the \emph{exchange ratio} of
		$x_i$ (with respect to $\Sigma$) is 
		\begin{equation}
			\hat{y}_{\Sigma}(x_i) = 
			\frac{\prod_{j: i\to j} x_j^{\# \arr(i\to j)}}
			{\prod_{j: j\to i} x_j^{\# \arr(j\to i)}},
		\end{equation}
		where $\arr(i\to j)$ denotes the number of arrows from 
		$i$ to $j$ in the quiver $Q$.
	\end{definition}

Given a cluster algebra $\A$, we let $\PP$ denote its frozen group, that is the group of Laurent monomials
in the frozen variables.  For elements $x,y\in \A$, we say that 
\emph{$x$ is proportional to $y$}, writing $x \propto y$, if $x=My$ for some Laurent monomial
$M\in \PP$. We then refer to $M$ as a \emph{frozen factor}.

\begin{definition}[Cluster quasi-homomorphism]\label{def:quasi}\cite[Definition 3.1 and Proposition 3.2]{Fraser}
Let $\A$ and $\overline{\A}$ be two cluster algebras of the same rank $r$,
and with respective frozen groups $\PP$ and $\overline{\PP}$.
	Then an algebra homomorphism $f:\A \to \overline{\A}$ that satisfies 
	$f(\PP)\subseteq \overline{\PP}$ is called a \emph{(cluster) quasi-homomorphism} from $\A$ to $\overline{\A}$
	if there are seeds
$\Sigma=((x_1,\dots,x_s),Q)$ and $\overline{\Sigma}=((\bar{x}_1,\dots,\bar{x}_{\bar{s}}), \bar{Q})$ 
	for $\A$ and $\overline{\A}$, such that 
	\begin{enumerate}
		\item $f(x_i) \propto \bar{x}_i$ for $1\leq i \leq r$
		\item $f(\hat{y}_{\Sigma}(x_i)) = \hat{y}_{\overline{\Sigma}}(\bar{x}_i)$ for $1\leq i \leq r$.
		\item the map $i \mapsto \bar{i}$ of mutable nodes in $Q$ and $\bar{Q}$ extends to 
                    an isomorphism of the corresponding induced subquivers. 
	\end{enumerate}
	If conditions (1), (2), and (3) hold, we say that 
	$\Sigma$ is \emph{similar} to $\overline{\Sigma}$, and 
	we write $f(\Sigma) \propto \overline{\Sigma}$.

Note that conditions (1) and (2) above imply condition (3), 
as they allow one to read off adjacencies of mutable nodes;
however, we choose to include condition (3) in the definition 
since it is readily 
	checkable and hence useful when looking $\Sigma$ and $\overline{\Sigma}$.

	\end{definition}

\begin{proposition}
\cite[Proposition 3.2]{Fraser} \label{prop:similar}
If a  seed
$\Sigma$ is similar to the seed $\overline{\Sigma}$, then 
each seed of the seed pattern containing $\Sigma$ is similar to the 
	corresponding seed of the seed pattern containing $\overline{\Sigma}$.
\end{proposition}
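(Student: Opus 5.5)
We argue by induction on the number of mutations separating a seed from $\Sigma$. Every seed of the pattern is reached from $\Sigma$ by a finite sequence of mutations at mutable vertices, and the \emph{corresponding} seed of the pattern of $\overline{\Sigma}$ is reached by the same sequence under the identification $i \mapsto \bar i$. It therefore suffices to prove the following. If $\Sigma \propto \overline{\Sigma}$ via $f$ and $1\le k\le r$, then $\mu_k(\Sigma) \propto \mu_{\bar k}(\overline{\Sigma})$. We check conditions (1), (2), (3) of \cref{def:quasi} for the mutated seeds in turn.

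\emph{Condition (3).} Quiver mutation at $k$ only adds or removes arrows $i\to j$ through paths $i\to k\to j$, and it reverses arrows at $k$. Hence the induced subquiver of $\mu_k(Q)$ on the mutable vertices depends only on the induced subquiver of $Q$ on the mutable vertices. Since the latter is isomorphic to its counterpart in $\bar Q$ via $i\mapsto \bar i$, the same isomorphism works after mutation. In matrix terms, $b_{ij}=\bar b_{\bar i\bar j}$ for mutable $i,j$, and this is preserved by the mutation rule.

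\emph{Condition (1).} Only the variable $x_k$ changes. Write $M=\prod_{k\to j}x_j$ and $N=\prod_{j\to k}x_j$, so that $x_k'x_k=M+N$ and $\hat y_\Sigma(x_k)=M/N$. Define $\overline M,\overline N$ similarly, so that $\bar x_k'\bar x_k=\overline N(1+\hat y_{\overline\Sigma}(\bar x_k))$. Since $f$ is an algebra homomorphism, and using (2) for $\Sigma$ together with $f(x_k)=c\,\bar x_k$ with $c\in\overline{\PP}$, we get
\[
f(x_k')=\frac{f(N)\bigl(1+f(\hat y_\Sigma(x_k))\bigr)}{f(x_k)}=\frac{f(N)}{c\,\overline N}\,\bar x_k' .
\]
It remains to show that $f(N)/\overline N\in\overline{\PP}$. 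Split $N$ into its mutable and frozen factors. The frozen factors map into $\overline{\PP}$ by the hypothesis $f(\PP)\subseteq\overline{\PP}$. The mutable factors $x_j$ map to $\bar x_j$ up to frozen factors, and they occur with the same multiplicities as in $\overline N$ because of (3). The frozen part of $\overline N$ lies in $\overline{\PP}$ by definition. Hence the ratio is a Laurent monomial in frozen variables.

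\emph{Condition (2).} This is the step requiring the most care. I would use the standard mutation rule for exchange ratios (the $\hat y$-variables of Fomin--Zelevinsky, Cluster algebras IV, Prop.~3.9), which follows directly from the definitions of quiver and seed mutation:
\[
\hat y'_k=\hat y_k^{-1},\qquad \hat y'_j=\hat y_j\,\hat y_k^{[b_{kj}]_+}(1+\hat y_k)^{-b_{kj}}\quad (j\ne k \text{ mutable}).
\]
This identity is where the frozen rows of the exchange matrix enter. It must be checked that those rows are correctly accounted for in $\hat y$, which holds because $\hat y_j=\prod_i x_i^{b_{ij}}$ over all $i$, including frozen $i$.

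Given this rule, the formula for $\hat y'_j$ is a rational expression in $\hat y_j,\hat y_k$ whose exponents $b_{kj}$ (for $j,k$ mutable) coincide with $\bar b_{\bar k\bar j}$ by (3). Applying the homomorphism $f$ and the induction hypothesis $f(\hat y_i)=\bar{\hat y}_{\bar i}$ gives $f(\hat y'_j)=\bar{\hat y}'_{\bar j}$ for every mutable $j$. This completes the inductive step and hence the proof.
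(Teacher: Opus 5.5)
Your proposal is correct. The paper gives no proof of its own and simply cites Fraser, and your argument is essentially the standard one found there: reduce to a single mutation $\mu_k$, get (1) by writing the exchange relation as $x_kx_k'=N\bigl(1+\hat y_\Sigma(x_k)\bigr)$ and comparing frozen factors, and get (2) from the Fomin--Zelevinsky mutation rule for exchange ratios, which involves only $\hat y_j$, $\hat y_k$ and mutable entries of the exchange matrix. Two points you leave implicit are harmless. First, $f$ must be extended to the localization of $\mathcal{A}$ at the current cluster variables, which is legitimate because their images are nonzero. Second, the sign convention relating $\prod_i x_i^{b_{ij}}$ to the paper's exchange ratio does not matter, since the reciprocal obeys the same rule with $B$ replaced by $-B$.
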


\section{Every $\Gr(3,n)$ tree web represents a cluster variable}

The goal of this section is to give a new proof of the following theorem, which was previously proved in \cite[Corollary 8.10]{FP16}.

\begin{introtheorem}[\cref{thm:main1}]
If an $\SL_3$ tensor diagram $D$ for $\Gr(3,n)$ is a tree web in the sense of \cref{tree web def}, then 
$[D]$ is a cluster or coefficient variable in $\Gr(3,n)$.
\end{introtheorem}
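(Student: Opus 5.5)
We will argue by induction on the number of internal vertices of the tree web $D$, equivalently on $n$, using cluster quasi-homomorphisms to reduce a large tree to a smaller one. First we normalize. Because $D$ is a bipartite trivalent tree with univalent black boundary vertices, every internal vertex adjacent to a boundary vertex is white. If $D$ has one internal vertex, it is a tripod and $[D]$ is a Pl\"ucker coordinate $P_{abc}$. By \cref{lem:add-marker-embed-gr} (removing unused boundary labels) we may assume every boundary vertex of $[n]$ is used. Then $D$ is a tripod on $\Gr(3,3)$, or an element of $\Gr(3,6)$ such as $D_2$. These are cluster or frozen variables by direct inspection of $\Sigma_{3,6}$ and its mutations. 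In general, \cref{lem:add-marker-embed-gr} lets us pass freely between $\Gr(3,S)$ and $\Gr(3,[n])$, and \cref{cor:cycrefl} lets us rotate and reflect the labels.

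For the inductive step, we find a ``leaf configuration''. Since $D$ is a planar tree, there is an internal black vertex $b$ with at least two white neighbours $w_1,w_2$, each of which carries two boundary leaves. By planarity these leaves are cyclically consecutive, say $i,i+1$ and $i+2,i+3$. Up to $\cyc^*$ we place them at positions $1,2,3,4$ or near $n$. Locally the subtree rooted at $b$ computes the vector
\[
\det{}^*(v_1\times v_2,\, v_3\times v_4,\, -),
\]
which lies in the span of $v_1,v_2$ and of $v_3,v_4$, i.e.\ the intersection line $\langle v_1,v_2\rangle\cap\langle v_3,v_4\rangle$. Hence $[D]$ equals the invariant of a smaller tree $D'$ on $n-3$ vectors, in which the branch is replaced by a single new boundary vector $u=\langle v_1,v_2\rangle\cap\langle v_3,v_4\rangle$.

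The key step is to realize this substitution as a cluster quasi-homomorphism $\Phi:\C[\widehat{\Gr}(3,n')]\to\C[\widehat{\Gr}(3,n)]$ that sends $[D']$ to $[D]$ up to a frozen factor. Here upper promotion or spurion promotion from \cite{even2023cluster,plabictangle} serves exactly as such a map: it replaces a column by a vector of the form $\langle v_a,v_b\rangle\cap\langle v_c,v_d\rangle$, or by a suitable combination. We would match the branch shape to one of these promotion maps, possibly composed with $\cyc^*$, $\refl^*$ and the inclusion $\iota$. We then invoke \cref{prop:similar}: a quasi-homomorphism sends cluster variables to cluster variables times frozen monomials. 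By induction $[D']$ is a cluster variable, so $[D]$ is proportional to one. Finally we remove the frozen factor. Since tree webs are irreducible \cite[Lemma 12.2]{FP16} and $\Phi([D'])$ has the same degree as $[D]$, the factor must be trivial.

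The main obstacle is this last matching. The promotion maps are quasi-homomorphisms only onto their specific images, and their frozen factors must be computed. Also, the image of $\iota$ may not line up exactly with the needed column replacement. We would first try to use spurion promotion with the four consecutive leaves, checking condition (2) of \cref{def:quasi} on the rectangles seed directly. If a branch instead has a black vertex with one leaf-pair and a deeper subtree, we would rotate the tree so that a deepest such configuration is chosen, which guarantees the two-white-leaf case always exists.
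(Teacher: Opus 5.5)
Your architecture is the paper's. A black vertex at maximal distance from a white root yields a branch with four cyclically consecutive leaves (the paper's ``4-leg pattern''). The branch is traded for a single column, the trade is realized by a promotion quasi-homomorphism, and the frozen factor is removed by irreducibility \cite[Lemma 12.2]{FP16}. But the step you call the main obstacle is the actual content of the proof, and you leave it as a plan (``we would match'', ``we would first try''). You never exhibit a map $\Phi$ or show $\Phi([D'])\propto[D]$, so the induction does not close. Your stated first choice is also the wrong tool. Spurion promotion is attached to a branch built from three triples (in this paper, the 9-leg pattern for $\SL_4$, which even requires inverting the non-frozen quadratic $\langle 12|345|678\rangle$). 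It does not produce the output of a 4-leg branch.

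The missing verification is short. Put the leaves at $n,1,2,3$, with one white vertex carrying $v_n,v_1$ and the other carrying $v_2,v_3$. Regard $D'$ as a web on $[4,n]$ in which $v_n$ is attached directly to $U$. The new column then keeps the label of an end leaf, the cyclic order is respected, and your concern about $\iota$ disappears. The quadruple-product identity $b(A\times B,C\times D)=\det(C,D,A)B-\det(B,C,D)A$ shows the branch outputs $b(v_n\times v_1,v_2\times v_3)=P_{23n}v_1-P_{123}v_n$. The crucial feature is that the coefficient of the retained vector $v_n$ is the frozen $P_{123}$. So this vector equals $P_{123}\,\psi(v_n)$ for the unary star (upper) promotion $Z'_n=\frac{P_{23n}}{P_{123}}Z_1-Z_n$. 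That map is already known to be a cluster quasi-homomorphism \cite[Theorem 8.1]{plabictangle}, so there is no need to recheck condition (2) on the rectangles seed. Evaluate both webs at $U$: the other two inputs do not involve $v_n,v_1,v_2,v_3$ and are fixed by $\psi$. This gives $[D]=P_{123}\,\psi([D'])$ exactly, after which your closing irreducibility argument applies. (In your labeling $1,2,3,4$ the same identity gives $P_{134}v_2-P_{234}v_1$, so the new column should carry the end label $1$, with frozen factor $P_{234}$.)

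Finally, your base case is muddled. After deleting unused labels, a tree web lives on $\Gr(3,3m)$ for arbitrary $m$. Only the tripod is needed as a base, and the inspection of $\Sigma_{3,6}$ is superfluous.
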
 

The heuristic of the proof is the following: We first show that every $\Gr(3,n)$ tree web is inductively generated by adding ``4-leg patterns'' to a tripod $P_{abc}$. We then recall the unary star promotion for $\Gr(3,n)$, and show that this cluster quasihomomorphism, up to a frozen factor, corresponds to ``adding a 4-leg pattern'' on the tree web. Hence every tree web lies in the iterated image of the promotion map and is a cluster or coefficient variable.

\subsection{Every $\Gr(3,n)$ tree web can be constructed from adding 4-leg patterns} 

\begin{remark} \label{spreading index remark}
     Recall from \cref{lem:add-marker-embed-gr} that the spreading index map is a cluster quasihomomorphism and almost tautological. Throughout this manuscript, unless otherwise specified, we consider both graph-theoretical operations and cluster quasihomomorphisms up to this spreading index map. For example, a tripod $P_{123}$ and a tripod $P_{abc}$ for any $a < b < c$ are considered to be the same up to this spreading index operation.
     
\end{remark}

\begin{definition} \label{4-leg pattern def}
    Let $D$ be a $\Gr(3,n)$ web diagram. Define a \emph{4-leg pattern} $\Lambda_{n,1,2,3}$ to be a bipartite subgraph of $D$ with vertices $\{v_{n}, v_1, v_2, v_3, W_1, W_2, B, U \}$, highlighted in red in the left hand side of \cref{4-leg pattern labeling}. The vertices $v_{n},v_1,v_2, v_3$ are boundary leaves, and $W_1, W_2, B, U$ are internal. The vertex $U$ is further connected to the rest of the graph, and the two unnamed black vertices may be leaves or internal vertices. 
    
    Since $\SL_3$ webs are invariant under dihedral group action on the boundary indices, for any 4-leg pattern whose legs are $v_{a-1}, v_{a}, v_{a+1}, v_{a+2}$ (we \emph{do} require legs to be cyclically consecutive), we may rotate the leg and make it become $\Lambda_{n,1,2,3}$. Thus, without loss of generality, when a tree web has a 4-leg pattern, we assume it to be $\Lambda_{n,1,2,3}$ and write it simply as $\Lambda$.
\end{definition}

\begin{definition} \label{adding and erasing 4-leg pattern def}
Let $T$ be a $\Gr(3,n)$ tree web that has a 4-leg pattern labeled as in the left hand side of \cref{4-leg pattern labeling}. Define the \emph{erasing a 4-leg pattern map} $\rho$ (a set-theoretical map on underlying vertex set and edge set) to be the map that erases this 4-leg pattern and replaces it with one edge $(U,v_n)$. Note that erasing a 4-leg pattern decreases the number of (connected) boundary vertices by 3, in particular making $v_1, v_2, v_3$ having degree 0. 

Let $T'$ be a $\Gr(3,n)$ tree web without any assumptions. We may equivalently think of it as a $\Gr(3,[4,n+3])$ tree web having $v_1, v_2, v_3$ as boundary vertices of degree 0. In $T'$, there is always a boundary leaf connected to its parent, and without loss of generality let this boundary leaf be $v_n$ and its parent be $U$. Define the \emph{adding a 4-leg pattern map} $\alpha$ that replaces this edge $(U,v_n)$ by the 4-leg pattern with legs $v_{n}, v_1, v_2, v_3$. 

It is easy to check that both operations take a tree web to a tree web, and they are inverses of each other in the sense that $\rho(\alpha(T')) = T'$ for any tree web $T'$, and $\alpha(\rho(T)) = T$ for any tree $T$ that has a 4-leg pattern (up to a dihedral group action). In fact, we will prove in \cref{inductive generation prop} that any tree having at least 6 boundary vertices has a 4-leg pattern.
\end{definition}

\begin{figure}[htbp]
    \centering
    \includegraphics[width=0.7\textwidth]{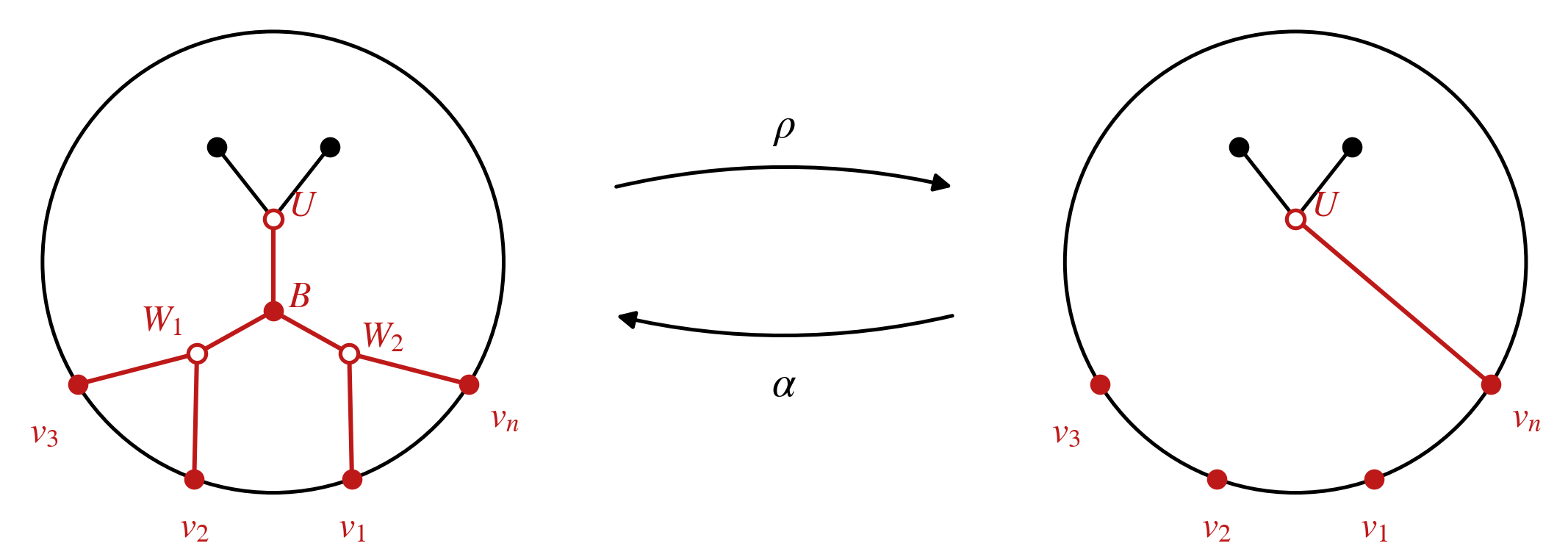}
    \caption{Labeling of a 4-leg pattern, the ``erasing a 4-leg pattern map'' $\rho$, and the ``adding a 4-leg pattern map'' $\alpha$.}
    \label{4-leg pattern labeling}
\end{figure}

The following lemma can be easily proved by induction. 
\begin{lemma} \label{3m boundary vertices}
    Every $\Gr(3,n)$ tree web has $3m$ boundary vertices 
    of degree $1$ (i.e. connected to the tree), for some $m \in \Z^+$, and $n = 3m$. 
\end{lemma}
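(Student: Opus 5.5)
The plan is a counting argument on the tree rather than the induction the paper mentions; it avoids needing the 4-leg pattern structure, which is only established later. Let $T$ be a $\Gr(3,n)$ tree web. Let $L$ be the number of boundary vertices of degree $1$, and let $w$ and $b$ be the numbers of internal white and internal black vertices. Every internal vertex is trivalent. Every boundary vertex is black, since all boundary vertices of a $\Gr(3,n)$ diagram are black. Bipartiteness therefore forces each edge to join a white vertex to a black one, so every edge has exactly one white endpoint. The only white vertices are the internal ones, so the number of edges is $E = 3w$.

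Next count edges from the black side. Every edge has exactly one black endpoint, which is either internal (degree $3$) or a boundary leaf (degree $1$). This gives $E = 3b + L$. Since $T$ is a tree, $E = (\#\text{vertices}) - 1 = w + b + L - 1$.

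Now solve the three relations. From $3w = 3b + L$ we get $L = 3(w-b)$, so $L$ is divisible by $3$. Set $m = w - b$.

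Positivity of $m$ comes from $T$ being connected with at least one boundary leaf, which forces $w \ge 1$. Alternatively, combine $3w = w + b + L - 1$ to get $2w = b + L - 1$, and substitute $L = 3w - 3b$ to obtain $w = 2b + 1$. Hence $m = b + 1 \ge 1$, and $L = 3m = 3b + 3$.

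The statement's assertion $n = 3m$ is then a matter of convention. Definition~\ref{tree web def} requires all boundary vertices to be univalent, so every boundary vertex is a leaf and $n = L$. If one also allows degree-$0$ boundary vertices, as in the reinterpretation inside Definition~\ref{adding and erasing 4-leg pattern def}, then by Remark~\ref{spreading index remark} one discards them up to the spreading index map. In that case $n$ refers to the number of connected boundary vertices.

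The only real subtlety is this bookkeeping about degree-$0$ boundary vertices. The counting itself is routine.
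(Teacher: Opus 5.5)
Your proof is correct, but it takes a different route from the paper. The paper gives no written proof; it only remarks that the lemma ``can be easily proved by induction.'' Presumably this means peeling off a local configuration at maximal depth, in the spirit of the erasing map $\rho$, which removes three leaves, one internal black vertex ($B$) and two internal white vertices ($W_1,W_2$).

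You instead double count edges:
\begin{itemize}
\item from the white side, $E=3w$;
\item from the black side, $E=3b+L$;
\item from the tree relation, $E=w+b+L-1$.
\end{itemize}

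Compared with the induction, your argument has several advantages:
\begin{itemize}
\item It is non-inductive and self-contained.
\item It never needs to locate a particular local configuration.
\item It does not use planarity at all.
\item It proves more than the statement, namely $b=m-1$ and $w=2m-1$. This matches the fact that each 4-leg erasure removes exactly one black and two white internal vertices.
\end{itemize}
The inductive route has the advantage of meshing directly with the generation procedure the paper uses immediately afterward.

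One small point: your first justification of positivity (that connectivity ``forces $w\ge 1$'') does not by itself give $m=w-b\ge 1$. What actually settles it is your second derivation, $w=2b+1$, or simply $L\ge 1$ together with $L=3m$.

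Your treatment of the $n=3m$ bookkeeping is fine. Under the tree web definition every boundary vertex is univalent, so $n=L$.
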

\begin{proposition} \label{inductive generation prop}
Every $\Gr(3,n)$ planar tree web can be inductively generated by adding 4-leg patterns to a tripod $P_{abc}$.
\end{proposition}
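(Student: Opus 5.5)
We argue by induction on $m$, where $n=3m$ as in \cref{3m boundary vertices}. By \cref{spreading index remark} we ignore the indices of the degree-$0$ boundary vertices. The plan is to show that every $\Gr(3,n)$ tree web $T$ with $m\ge 2$ contains a 4-leg pattern. Given that, erasing it (\cref{adding and erasing 4-leg pattern def}) produces a tree web $\rho(T)$ with $3(m-1)$ leaves, and $T=\alpha(\rho(T))$ up to dihedral relabeling. Induction then reduces everything to the case $m=1$.

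\textbf{Base case $m=1$.} An internal vertex of a tree web cannot be adjacent to three internal vertices without producing more leaves. More precisely, a tree web with exactly three leaves has a single internal vertex. That vertex is white because the boundary vertices are black. So the web is a tripod $P_{abc}$.

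\textbf{Locating the pattern.} Let $T_{\mathrm{int}}$ be the tree obtained from $T$ by deleting all boundary leaves. By bipartiteness, every internal black vertex has only white internal neighbors, so it has degree $3$ in $T_{\mathrm{int}}$. Hence every leaf of $T_{\mathrm{int}}$ is a white vertex carrying exactly two boundary leaves (a ``cherry''). Since $m\ge 2$, $T_{\mathrm{int}}$ has more than one vertex. Take a longest path in $T_{\mathrm{int}}$ and let $W_1$ be one endpoint. Then $W_1$ is a cherry, and its neighbor $B$ on the path is black. The vertex $B$ has two further neighbors. At most one of them lies on the path, and by maximality of the path every branch at $B$ off the path has depth at most $1$ from $B$. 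Hence some neighbor $W_2\neq W_1$ of $B$ is also a leaf of $T_{\mathrm{int}}$, i.e. a cherry. The third neighbor of $B$ is then $U$. The degenerate case in which all three neighbors of $B$ are cherries (which forces $n=6$) is still allowed, since the neighbors of $U$ other than $B$ may be leaves. This gives the vertices $W_1,W_2,B,U$ of \cref{4-leg pattern def}, with the four boundary leaves hanging off $W_1$ and $W_2$.

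\textbf{Checking the legs are consecutive.} It remains to verify that these four legs are cyclically consecutive on the boundary and arranged as in \cref{4-leg pattern labeling}. Here we use planarity. The subtree hanging off $B$ away from $U$ is separated from the rest of the web by the single edge $(B,U)$. Its four leaves therefore occupy a cyclic interval of the boundary. After a dihedral relabeling, this interval is $v_n,v_1,v_2,v_3$.

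I expect the main obstacle to be the bookkeeping in this last step. One must match the cyclic orders at $W_1$, $W_2$ and $B$ with the exact shape of $\Lambda_{n,1,2,3}$ in the figure, including which of $W_1,W_2$ carries $v_n$. One must also check that $\rho(T)$ is again a tree web with univalent boundary vertices, which holds because $U$ keeps degree $3$ after erasing. Given that, the induction closes.
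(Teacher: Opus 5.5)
Your proof is correct and follows the paper's argument. An extremal choice produces an internal black vertex $B$ with two white neighbors, each carrying two boundary leaves; planarity forces these four legs to be cyclically consecutive; and one erases the 4-leg pattern and inducts. The only cosmetic difference is the extremal choice: you take an endpoint of a longest path in the tree of internal vertices, whereas the paper roots the tree at a white vertex and takes a black vertex at maximal distance from the root.
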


\begin{proof}
    Let $T = (V,E)$ be a $\Gr(3,n)$ tree web. We prove the lemma by induction on the number of boundary vertices of $T$. By \cref{3m boundary vertices}, $T$ has $n = 3m$ boundary vertices. 
 Without loss of generality, by \cref{lem:add-marker-embed-gr}, 
 we can assume that all boundary vertices are connected to the rest of the web.
    
    When $m = 1$, $T$ is a tripod $P_{abc}$, which proves the base case.

    Now consider a tree web $T$ with $3m$ boundary vertices 
    for $m>1$.     Pick any white vertex (which must be internal) 
    and designate it as the root $R$ of the tree. Let $B$ be an internal black vertex which maximizes the distance (among all internal black vertices) to $R$.  Then $B$ has one white parent vertex $U$ and two white children vertices $W_1$ and $W_2$.
    
    Each of $W_1$ and $W_2$ have two black children vertices which are further from $R$ than $B$ is, so those four black children vertices must be boundary vertices.  Since $T$ is planar and all boundary vertices have degree $1$, these four black children vertices must be consecutive boundary vertices as in the figure below.  This is precisely a $4$-leg pattern.

\[
\begin{tikzcd}[column sep=0.7em, row sep=0.7em]
	&&& {\circ U} & \\
	&& {\bullet B} \\
	& {\circ W_1} && {\circ W_2} \\
	{\bullet C} & {\bullet C'} && {\bullet C''} & {\bullet C'''}
	\arrow[no head, from=2-3, to=1-4]
	\arrow[no head, from=2-3, to=3-2]
	\arrow[no head, from=2-3, to=3-4]
	\arrow[no head, from=3-2, to=4-1]
	\arrow[no head, from=3-2, to=4-2]
	\arrow[no head, from=3-4, to=4-4]
	\arrow[no head, from=3-4, to=4-5]
\end{tikzcd}
\]

Since we have found a 4-leg pattern $\Lambda$ in the tree $T$, we can apply the erasing map $\rho$, and we obtain $T' = \rho(T)$, which is a $\Gr(3,n)$ tree web on $3m-3$ boundary vertices. By induction, $T'$ can be obtained by adding 4-leg patterns to a tripod, so we are done. 
\end{proof}

\subsection{Unary star promotion}

\begin{definition} \label{Gr(3,n) promotion map}
    Consider the  map $\Gr(3,n) \to \Gr(3, [4,n])$ which sends the $3 \times n$ matrix $Z=(Z_1\mid \ldots \mid Z_n)$ to the 
$3 \times (n-3)$ matrix $Z' = (Z'_4\mid \ldots \mid Z'_n)$, where $Z'_i=Z_i$ for $i \ne n$ and $Z'_n = \frac{P_{2,3,n}}{P_{1,2,3}} Z_{1} - Z_n$. 
This map induces a homomorphism $\psi: \CC[\widehat{\Gr}(3,[4,n])] \to \C[\widehat{\Gr}(3,n)]$ called the \emph{unary star promotion} which maps Pl\"ucker coordinates as follows:
\begin{itemize}
    \item if $n \in I$, then  $$P_I' \mapsto \frac{P_{23n}}{P_{123}} P_{I - \{ n \} \cup \{1 \} } - P_I.$$
    \item if $n \notin I$, then $P_I' \mapsto P_I$.
\end{itemize}
Above, $I$ is a subset of $[4,n]$ with size $3$, and $P'_{I}$ is the minor of $Z' = (Z'_4\mid \ldots \mid Z'_n)$ with columns $Z'_i$, $i \in I$. 
\end{definition}

\begin{theorem}\cite[Theorem 8.1]{plabictangle}
Unary star promotion is a cluster quasihomomorphism from $\C[\widehat{\Gr}(3,[4,n])] \to \C[\widehat{\Gr}(3, n)]$, where the cluster structure of $ \C[\widehat{\Gr}(3, [4,n])]$ is the natural restriction of the one for $\C[\widehat{\Gr}(3, n)]$.
\end{theorem}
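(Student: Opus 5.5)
The plan is to verify \cref{def:quasi} directly. I would work in the generalized sense the statement needs: since the ranks of $\Gr(3,[4,n])$ and $\Gr(3,n)$ differ, the target seed is a seed of $\Gr(3,n)$ with some extra mutable vertices treated as frozen. By \cref{prop:similar} it then suffices to exhibit a single seed $\Sigma'$ for $\C[\widehat{\Gr}(3,[4,n])]$ and a seed $\Sigma$ for $\C[\widehat{\Gr}(3,n)]$ that are similar under $\psi$. Every other seed of the pattern of $\Sigma'$ is then automatically similar to the corresponding seed of $\Sigma$, so $\psi$ sends cluster variables to cluster variables up to frozen factors.

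\textbf{Choice of $\Sigma'$ and computing its image.} For $\Sigma'$ I would take the rectangles seed $\Sigma_{3,[4,n]}$ of \cref{def:rectangles}, cyclically rotated via \cref{cor:cycrefl} so that index $n$ appears in as few labels as possible. The variables split into two kinds.
\begin{itemize}
\item Pl\"ucker coordinates $P_I'$ with $n\notin I$ are mapped to $P_I$.
\item For $I=\{a,b,n\}$, the rule in \cref{Gr(3,n) promotion map} gives
\[
\psi(P'_{abn})=\frac{1}{P_{123}}\bigl(P_{23n}P_{ab1}-P_{123}P_{abn}\bigr).
\]
\end{itemize}
The bracket is the web invariant of a degree-two tree web, namely a tripod with a 4-leg pattern attached at leg $n$. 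Up to the frozen factor $P_{123}^{-1}$, it is a known non-Pl\"ucker cluster variable of $\Gr(3,n)$, of the same shape as the invariant $[D_2]$ computed earlier. The target seed $\Sigma$ is therefore obtained from a rectangles-type seed of $\Gr(3,n)$ containing $P_{123}$, $P_{23n}$ and the relevant $P_{ab1}$. I would produce it by an explicit short mutation sequence along the column of the quiver indexed by labels containing $n$. Each mutation $P_{abn}\mapsto$ quadratic invariant would be justified by a three-term Pl\"ucker relation, and the vertices coming from indices $1,2,3$ would be frozen in the induced subquiver.

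\textbf{Checking the similarity conditions.} Condition (3) of \cref{def:quasi} is a comparison of the mutable parts of the two quivers, which should match by construction. Condition (1) is the displayed formula above. Condition (2) is the real content. For each mutable vertex of $\Sigma'$, I must check that $\psi$ of its exchange ratio equals the exchange ratio of the corresponding vertex in $\Sigma$. Each exchange ratio is a degree-zero Laurent monomial, and in the rectangles quiver every vertex has equally many incoming and outgoing neighbours containing $n$. So the factors $P_{123}^{\pm1}$ cancel, and the identity reduces to matching the remaining frozen or Pl\"ucker factors, using the same three-term relations as in the mutation sequence.

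\textbf{Main obstacle.} I expect the hard step to be getting the target seed exactly right, i.e.\ producing a mutation sequence in $\Gr(3,n)$ whose new variables are precisely the quadratic invariants above and whose quiver restricts correctly. Vertices adjacent to the boundary of the rectangle, where labels involve $1$, $2$, $3$ and $n$ simultaneously, will need individual case checks. If this direct approach becomes unwieldy, the fallback is to realize $\psi$ as the map associated to a plabic tangle: glue a small plabic graph for the 4-leg pattern onto a reduced plabic graph for $\Gr(3,[4,n])$, and read off the face-label correspondence from the amalgamated graph, which yields the similarity of seeds at once.
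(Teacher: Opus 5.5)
The paper does not prove this statement; it is quoted from \cite[Theorem 8.1]{plabictangle}. The closest in-paper argument is the proof of \cref{thm:6leg}, and your formal skeleton agrees with it: freeze the surplus variables so the ranks match, exhibit one pair of similar seeds, and propagate with \cref{prop:similar}. Two of your steps are correct: the computation $\psi(P'_{abn})=X_{ab}/P_{123}$, where $X_{ab}:=P_{23n}P_{1ab}-P_{123}P_{abn}$ is a degree-two tree-web invariant, and the observation that, by multihomogeneity, the powers of $P_{123}$ cancel in every exchange ratio.

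\textbf{The gap.} The only non-formal step is never carried out. You need a seed $\overline{\Sigma}$ of $\Gr(3,n)$ with three properties:
\begin{enumerate}
\item it contains all the images simultaneously;
\item its induced quiver on the images of the mutable vertices is the source quiver;
\item it has no further arrows at those vertices, since the $P_{123}$'s cancel and any extra arrow would violate condition (2).
\end{enumerate}
You flag this as the main obstacle, but the mechanism you sketch for it does not work.

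First, by homogeneity, a mutation trading the degree-one $P_{abn}$ for the degree-two $X_{ab}$ has a cubic exchange relation, not a three-term Pl\"ucker relation. For example, in $\Gr(3,6)$, mutating $P_{236}$ in the rectangles seed gives $P_{236}X=P_{126}P_{234}P_{356}+P_{123}P_{256}P_{346}$ with $X=P_{124}P_{356}-P_{123}P_{456}$.

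Second, and more seriously, $P'_{n-2,n-1,n}$ is frozen in every source seed. So $X_{n-2,n-1}$ must be a variable of $\overline{\Sigma}$, since $\psi$ must send the frozen group into the frozen group. But $P_{n-2,n-1,n}$ is frozen in $\Gr(3,n)$ as well, so $X_{n-2,n-1}$ cannot be obtained by mutating it. This is already visible for $n=6$. There the source is $\Gr(3,\{4,5,6\})$ and its only variable $P'_{456}$ is frozen. The variable $X_{45}=\cyc^*(X)$ is reached by mutating $P_{134}$ in $\Sigma^1_{3,6}$, not $P_{456}$.

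So the target seed must come from a genuinely different, explicit mutation sequence, followed by a check of conditions (1)--(3). Compare the $13$-step sequence from a rotated rectangles seed in the proof of \cref{thm:6leg}. That construction is the entire content of the theorem, and your proposal does not supply it.

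The plabic-tangle fallback does not fill the gap as stated. Face labels of a reduced plabic graph for $\Gr(3,n)$ are Pl\"ucker coordinates, whereas $X_{ab}$ is not. Hence condition (1) cannot simply be read off from the face labels of the glued graph.
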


\begin{example} \label{4-leg and promotion correspondence ex}
   Let $D_1$ be the tree web correspond to the cluster variable $P_{457}P_{689}-P_{456}P_{789}$. Applying $\psi$ gives 
\begin{align*}
\psi(P_{457}P_{689}-P_{456}P_{789}) & = P_{457} (\frac {P_{239}}{P_{123}} P_{168}- P_{689}) - P_{456} (\frac {P_{239}}{P_{123}}P_{178}- P_{789}) \\
 &= \frac{1}{P_{123}} (P_{457}P_{239}P_{168} - P_{123} P_{457} P_{689} - P_{456}P_{239}P_{178} + P_{123} P_{456}P_{789}).
\end{align*}
Since $P_{123}$ is a frozen variable, we can disregard the coefficient $\frac{1}{P_{123}}$, and the resulting cluster variable $(P_{457}P_{239}P_{168} - P_{123} P_{457} P_{689} - P_{456}P_{239}P_{178} + P_{123} P_{456}P_{789})$ corresponds to the web $D$. 

Graphically, this example is shown in \cref{skein rel and tpsi example figure}. Observe that applying $P_{123} \cdot \psi$ to $[D']$ precisely corresponds to adding a 4-leg pattern to $D'$. We will prove that this correspondence is true for any tree web (assuming $D'$ having degree 0 boundary vertices $v_1, v_2, v_3$) in \cref{adding 4-leg is applying psi}.

  \begin{figure}[htbp]
    \centering
    \includegraphics[width=0.75\textwidth]{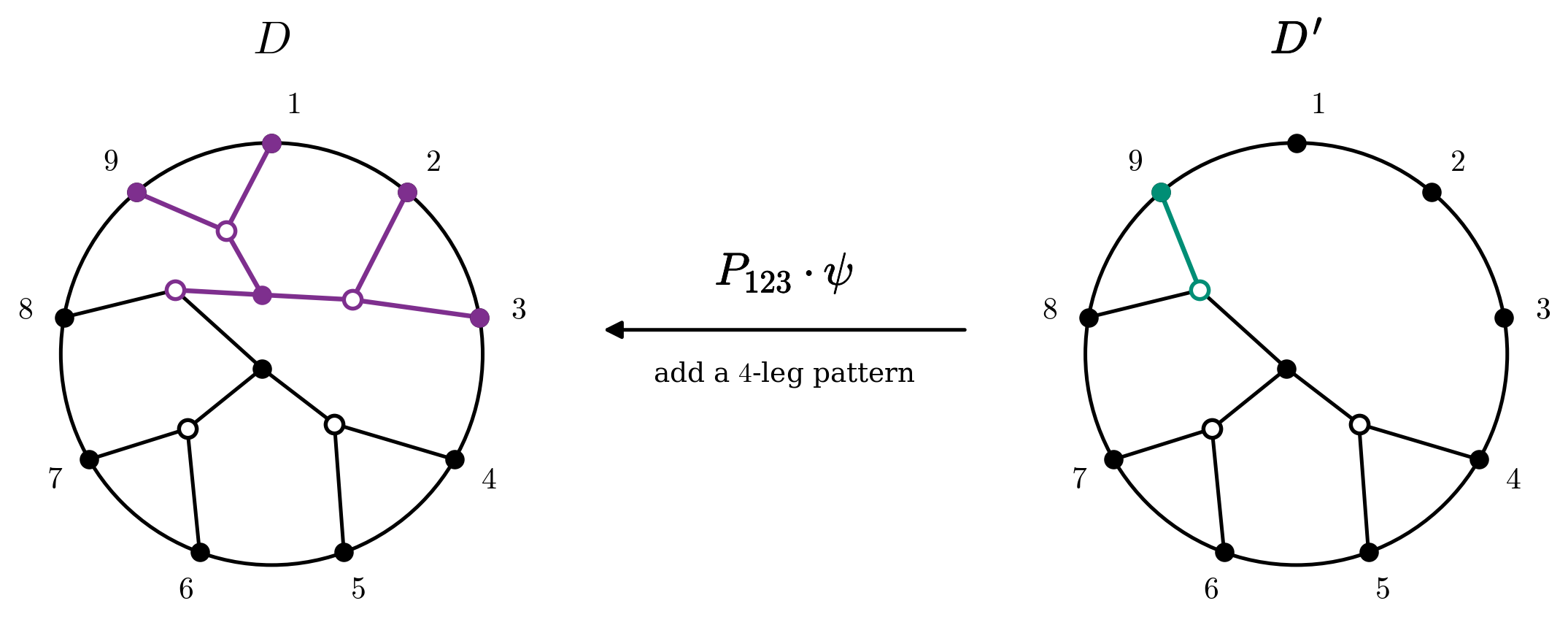}
    \caption{An illustration of \cref{4-leg and promotion correspondence ex}.}
    \label{skein rel and tpsi example figure}
\end{figure}

\end{example}

\subsection{The adding 4-leg pattern operation corresponds to the promotion map}\label{sec:local}
 
Let $T$ be a $\Gr(3,n)$ tree web. We consider it as a $\Gr(3, [4,n+3])$ tree web whose boundary vertices $v_1,v_2,v_3$ have degree 0. We now show that adding a 4-leg pattern to $T$ corresponds to applying $P_{123} \cdot \psi$ to $[T]$, where $\psi$ is the unary star promotion defined in Definition \ref{Gr(3,n) promotion map}.

The following \emph{vector quadruple-product identity} will be useful to us.

\begin{lemma} \label{multi linear identity}
    Let $V$ be a 3-dimensional $\C$-vector space. Recall \cref{read off SL3 web rem} that in an $\SL_3$ web, an internal black vertex takes in two covectors and outputs a vector by $$b: V^* \times V^* \rightarrow V, \qquad (c_1, c_2) \mapsto {\det}^*(c_1, c_2, -). $$ An internal white vertex takes in two vectors and outputs a covector as the cross product $$\times: V \times V \rightarrow V^*, \qquad (v,w) \mapsto \det(v,w, -).$$ Then $$b(A \times B, C \times D) = \det(C,D,A)B - \det (B,C,D)A.$$
\end{lemma}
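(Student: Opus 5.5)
The plan is to check the identity by evaluating both sides against an arbitrary test vector, or equivalently by pairing with an arbitrary covector. Since both sides are multilinear in $(A,B,C,D)$ and $\SL(V)$-equivariant, it is enough to compare them after contracting with a generic covector. The cleanest route uses the standard identification of $V^*$ with $V$ via the volume form. Under $\det$, the covector $A\times B = \det(A,B,-)$ is the usual cross product. The map $b$ is then, up to the normalization $\langle \det,\det^*\rangle=1$, again a cross product, now taking two covectors to a vector. So the claim becomes the classical vector triple-product formula $(a\times b)\times(c\times d) = \det(a,c,d)\,b - \det(b,c,d)\,a$, written with the paper's conventions.

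Concretely, I will fix a basis $e_1,e_2,e_3$ of $V$ with $\det(e_1,e_2,e_3)=1$ and take the dual basis $e^1,e^2,e^3$, so that $\det^*(e^1,e^2,e^3)=1$. In coordinates, $(A\times B)_k=\epsilon_{ijk}A^iB^j$, and $b(c_1,c_2)^m=\epsilon^{klm}(c_1)_k(c_2)_l$. Then
\[
b(A\times B, C\times D)^m=\epsilon^{klm}\epsilon_{ijk}\epsilon_{pql}A^iB^jC^pD^q .
\]
I will contract the first pair using $\epsilon^{klm}\epsilon_{ijk}=\delta^l_i\delta^m_j-\delta^l_j\delta^m_i$. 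This produces
\[
\epsilon_{pqi}A^iC^pD^q\,B^m-\epsilon_{pqj}B^jC^pD^q\,A^m=\det(C,D,A)\,B^m-\det(C,D,B)\,A^m .
\]
Finally, I will use cyclicity, $\det(C,D,B)=\det(B,C,D)$, to arrive at the stated formula.

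An alternative argument avoids indices. Fix $A,B$ and let $\omega=A\times B$, whose kernel is $\mathrm{span}(A,B)$. The vector $b(\omega,C\times D)$ is annihilated by both $\omega$ and $C\times D$. Hence it lies in $\mathrm{span}(A,B)\cap\mathrm{span}(C,D)$, so it has the form $xA+yB$. Pairing with a covector that kills $A$ determines $y$, and pairing with one that kills $B$ determines $x$.

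The only real obstacle is keeping signs and orientation conventions straight. Three points need care. First, the order of the slots in the cyclic-order reading: the outgoing edge comes last, which matches $\det(v,w,-)$. Second, the index position in the contraction identity. Third, the claim that $\det^*$ with $\langle\det,\det^*\rangle=1$ is exactly $\epsilon^{klm}$ in the dual basis. I will confirm the sign on a single basis example. Taking $A=e_1$, $B=e_2$, $C=e_1$, $D=e_3$, the left side is $b(e^3,-e^2)=-\epsilon^{32m}e_m=e_1$. The right side is $\det(e_1,e_3,e_1)e_2-\det(e_2,e_1,e_3)e_1=e_1$, so the signs agree. Since both sides are multilinear, agreement of the general index computation together with this sanity check settles the lemma.
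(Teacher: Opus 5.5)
Your proof is correct. The paper gives no proof of this lemma; it just invokes the classical vector quadruple-product identity $(a\times b)\times(c\times d)=\det(a,c,d)\,b-\det(b,c,d)\,a$. That is exactly the reduction in your first paragraph, and your $\epsilon$--$\delta$ contraction supplies the verification the paper leaves out.

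The signs all come out right under the normalization $\det^*(e^1,e^2,e^3)=1$. This includes the final cyclic rewrite $\det(C,D,B)=\det(B,C,D)$ and your basis check. This normalization is also the one consistent with the paper's evaluation $[D_2]=P_{124}P_{356}-P_{123}P_{456}$.

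The index computation alone proves the lemma, so you can drop two things. The $\SL(V)$-equivariance remark is not needed. The kernel-intersection sketch is also not needed; as written it assumes $A,B$ and $C,D$ are independent and never actually computes $x$ and $y$.
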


\begin{theorem} \label{adding 4-leg is applying psi}
    Let $D'$ be a $\Gr(3, [4,n+3])$ tree web, and write $v_1, v_2, v_3$ as boundary vertices of degree 0. Let $\alpha(D') = D$ be the tree web obtained by adding a 4-leg pattern from $D'$. Then $P_{123} \cdot \psi([D']) = [D]$.
\begin{figure}[htbp]
    \centering
    \includegraphics[width=0.8\textwidth]{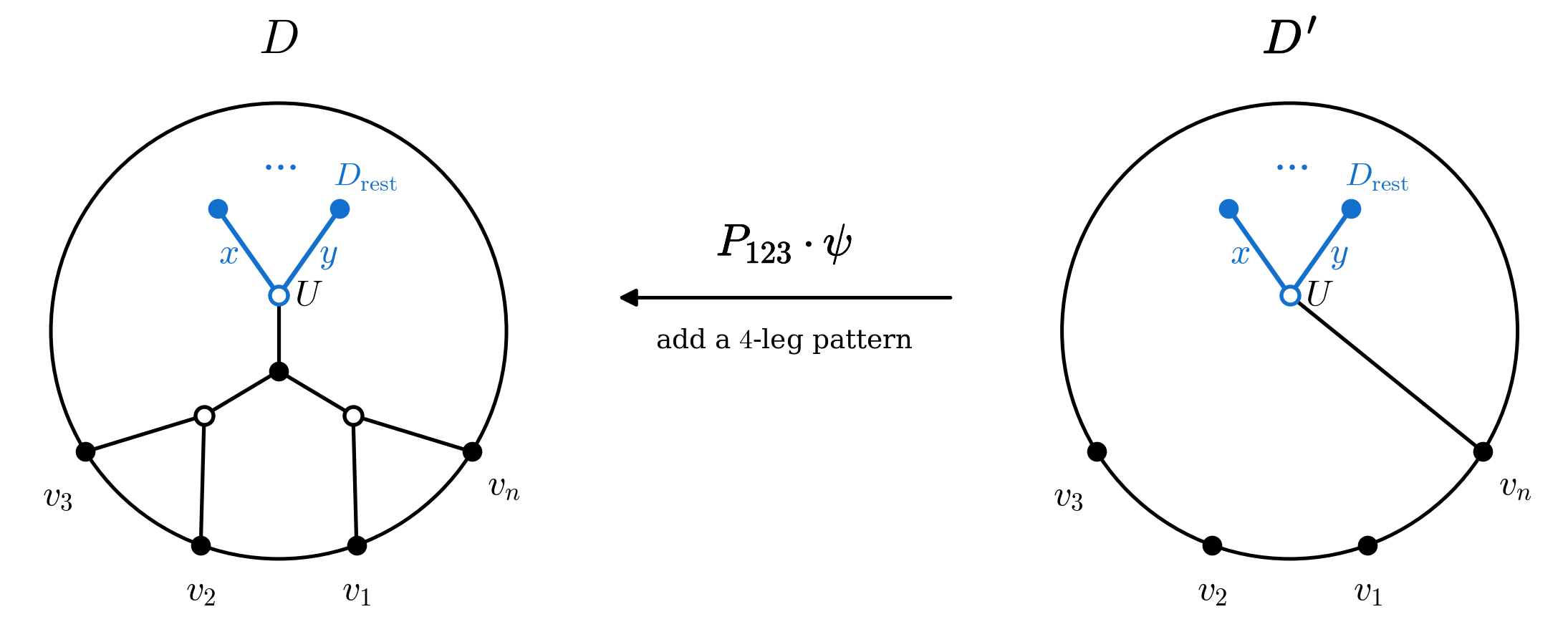}
    \caption{Illustration of the correspondence between $\tpsi$ and adding a 4-leg pattern.}
    \label{applying tpsi = applying alpha figure}
\end{figure}
\end{theorem}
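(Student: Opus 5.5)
Since $[D']$ is multilinear in the boundary vectors $Z_4,\dots,Z_{n+3}$ and does not involve $Z_1,Z_2,Z_3$, the plan is to evaluate $\psi([D'])$ by substitution. The map $\psi$ is induced by $Z\mapsto Z'$ with $Z'_i=Z_i$ for $i\neq n$ (here the last index, i.e.\ the leaf $v_n$ of $D'$) and $Z'_n=\frac{P_{23n}}{P_{123}}Z_1-Z_n$. Hence $\psi([D'])$ is the evaluation of the web $D'$ at the vectors $Z'$. The leaf $v_n$ appears exactly once in $D'$ (it is univalent), so by multilinearity
\[
P_{123}\cdot\psi([D']) = [D'](Z_4,\dots,Z_{n-1},\,P_{23n}Z_1-P_{123}Z_n).
\]
The right-hand side is simply $D'$ with the vector $w:=P_{23n}Z_1-P_{123}Z_n$ fed into the edge $(U,v_n)$.

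Next I evaluate $D=\alpha(D')$ with the root chosen inside the part coming from $D'$. By \cref{read off SL3 web rem}, everything except the 4-leg pattern is read off exactly as in $D'$. The only change is the vector that the black vertex $B$ sends into $U$ along the edge replacing $(U,v_n)$. With the labeling of \cref{4-leg pattern labeling}:
\begin{itemize}
\item $W_1$ has legs $v_n,v_1$ and outputs the covector $Z_n\times Z_1$;
\item $W_2$ has legs $v_2,v_3$ and outputs $Z_2\times Z_3$;
\item $B$ outputs $b(Z_n\times Z_1,\,Z_2\times Z_3)$, up to the orientation conventions.
\end{itemize}
By \cref{multi linear identity},
\[
b(Z_n\times Z_1,Z_2\times Z_3)=\det(Z_2,Z_3,Z_n)Z_1-\det(Z_1,Z_2,Z_3)Z_n=P_{23n}Z_1-P_{123}Z_n=w.
\]
Thus $[D]$ equals $[D']$ evaluated with $w$ in place of $Z_n$, which is exactly $P_{123}\cdot\psi([D'])$.

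The main obstacle is bookkeeping of signs and orientation. The clockwise cyclic order at $W_1$, $W_2$, $B$ determines whether we get $Z_n\times Z_1$ or $Z_1\times Z_n$, and in which order $B$ receives its inputs. Each swap flips a sign, and the sign of $P_{23n}$ depends on the alternating convention for Plücker sequences. I would fix the planar embedding of \cref{4-leg pattern labeling}, with $v_n,v_1,v_2,v_3$ clockwise, and check that the clockwise reading produces exactly $b(Z_n\times Z_1, Z_2\times Z_3)$ and not its negative. An overall sign is in any case immaterial up to the global root-sign ambiguity of web evaluation, but I would check it on \cref{4-leg and promotion correspondence ex} to confirm consistency.

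A minor remaining point is that reading the pattern as a subtree feeding a single vector into $U$ is legitimate. This holds because $D$ is a tree and the pattern attaches to the rest of $D$ only through the edge $B$–$U$, and rooting at a vertex outside the pattern is allowed by \cref{read off SL3 web rem}.
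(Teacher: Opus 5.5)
Your proposal is correct and is essentially the paper's proof: both use that $\psi$ only changes the vector at the univalent leaf $v_n$, so by linearity $P_{123}\cdot\psi([D'])$ is $D'$ with $P_{23n}Z_1-P_{123}Z_n$ fed along the edge into $U$, and both identify this with the vector $b(Z_n\times Z_1,Z_2\times Z_3)$ that the 4-leg pattern sends into $U$, via \cref{multi linear identity}. The paper roots at $U$ and compares $\det(\vec{v_n},\vec{x},\vec{y})$ with $\det(\vec{b},\vec{x},\vec{y})$. One small correction to your remark about a ``global root-sign ambiguity'': there is none for $\SL_3$ webs once the cyclic orders are fixed, since cyclic permutations of three arguments are even. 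So the orientation check you describe is what actually pins down the sign.
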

\begin{proof}

Write $D = \Lambda \sqcup D_{\text{rest}}$ and $ D' = L_{v_nU}\sqcup D_{\text{rest}}$. Here $\Lambda$ is the 4-leg pattern in $D$, $L_{v_nU}$ is the line from $v_n$ to $U$ in $D'$, and $\sqcup$ means that two subgraphs are disjoint in edges and the only intersection is at the one vertex connecting them. 

Orient all edges toward vertex $U$ and evaluate two webs at $U$. Let the two incoming legs from $D_{\text{rest}}$ be $\vec{x}$ and $\vec{y}$. 

In $D'$, the third incoming vector is just $\vec{v_n}$, so $[D'] = \det(\vec{v_n}, \vec{x}, \vec{y}).$ 

In $D$, two incoming edges $D_{\text{rest}}$ are still $\vec{x}$ and $\vec{y}$, and the third incoming edge is $\vec{b} = b(\vec{v_n} \times \vec{v_1}, \vec{v_{2}} \times \vec{v_3})$. Evaluating at $U$, $[D] = \det(\vec{b}, \vec{x}, \vec{y}).$

\begin{figure}[htbp]
    \centering
    \includegraphics[width=0.8\textwidth]{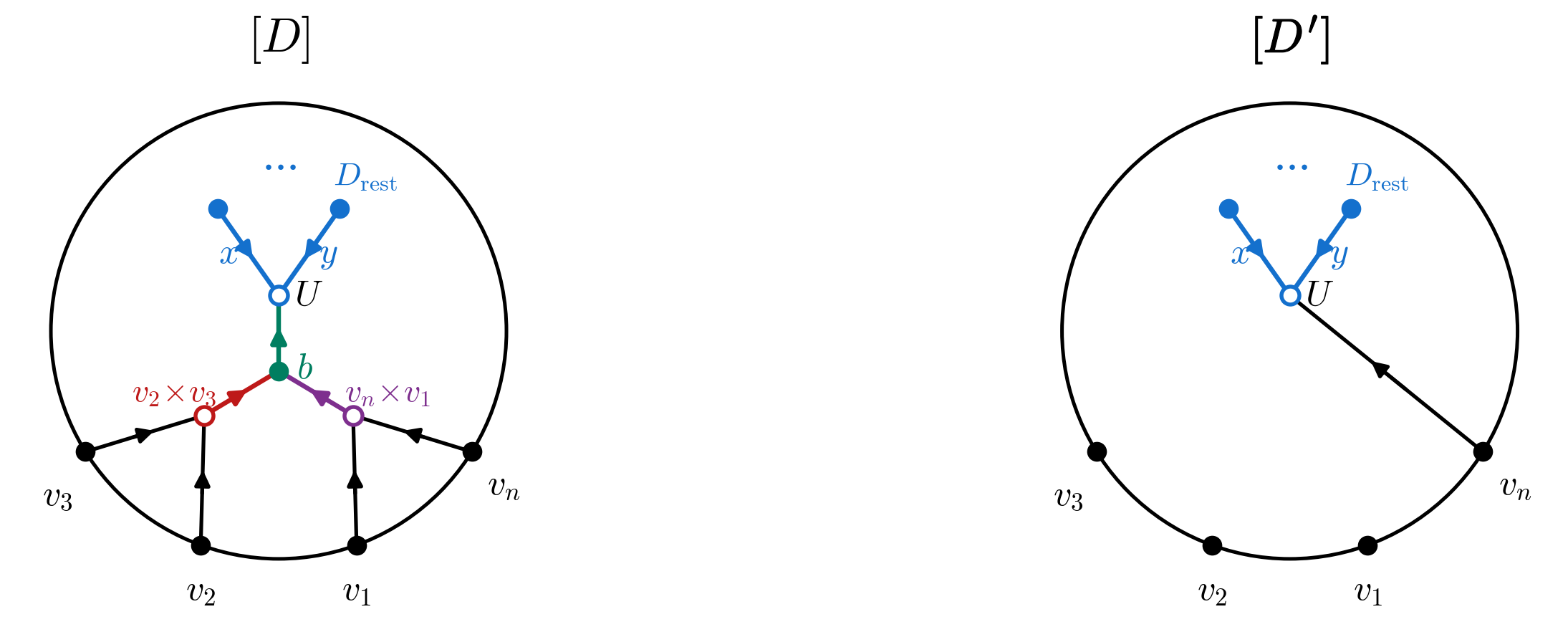}
    \caption{Evaluate $[D]$ and $[D']$ at vertex $U$.}
    \label{evaluate web figure}
\end{figure}

We want to show $D = P_{123} \cdot \psi(D')$, that is, $$\det(\vec{b}, \vec{x}, \vec{y}) = P_{123} \cdot \psi(\det(\vec{v_n}, \vec{x}, \vec{y})) = \det (P_{123} \cdot \psi (\vec{v_n}), \psi(\vec{x}), \psi(\vec{y})).$$

Because $\vec{x}$ and $\vec{y}$ does not contain $\vec{v_n}, \vec{v_1}, \vec{v_2}, \vec{v_3}$ as a factor, $\psi (\vec{x}) = \vec{x}$ and $\psi (\vec{y}) = \vec{y}$. We are left to show that $P_{123} \cdot \psi(\vec{v_n}) = \vec{b}$.

We know from the cluster quasihomomorphism that 
$$P_{123} \cdot \psi(\vec{v_n}) = P_{23n} \vec{v_1} - P_{123} \vec{v_n} = \det(\vec{v_2}, \vec{v_3}, \vec{v_{n}}) \vec{v_{1}} - \det(\vec{v_1}, \vec{v_2}, \vec{v_3})\vec{v_n}.$$

By \cref{multi linear identity}, $$b(\vec{v_{n}} \times \vec{v_1}, \vec{v_2} \times \vec{v_3}) = \det(\vec{v_2}, \vec{v_3}, \vec{v_{n}}) \vec{v_1} - \det(\vec{v_1}, \vec{v_2}, \vec{v_3}) \vec{v_n}.$$
Hence $P_{123} \cdot \psi(\vec{v_n}) = \vec{b}$ as desired. 
\end{proof}

\begin{theorem} \label{thm:main1}
If an $\SL_3$ tensor diagram $D$ for $\Gr(3,n)$ is a tree web in the sense of \cref{tree web def}, then $[D]$ is a cluster or coefficient variable for $\Gr(3,n)$.
\end{theorem}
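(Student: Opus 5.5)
We argue by induction on $m$, where $n=3m$ is the number of boundary vertices of $D$ of degree $1$ (\cref{3m boundary vertices}). By \cref{lem:add-marker-embed-gr} and \cref{spreading index remark}, boundary vertices of degree $0$ do not matter: a cluster or coefficient variable for the smaller Grassmannian stays a cluster or coefficient variable after re-embedding. So we may assume every boundary vertex of $D$ is a leaf.

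\textbf{Base case.} When $m=1$, $D$ is a tripod and $[D]=P_{abc}$ is a Pl\"ucker coordinate. A Pl\"ucker coordinate in $\Gr(3,3)$ is the frozen variable $P_{123}$. After spreading indices via \cref{lem:add-marker-embed-gr}, it is a cluster or frozen variable of $\Gr(3,n)$. Alternatively, every Pl\"ucker coordinate is known to be a cluster or frozen variable of Scott's cluster structure.

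\textbf{Inductive step.} Let $m>1$. By the proof of \cref{inductive generation prop}, $D$ contains a $4$-leg pattern on cyclically consecutive legs. Using \cref{cor:cycrefl}, we rotate labels so that the pattern is $\Lambda_{n,1,2,3}$; cyclic rotation sends cluster variables to cluster variables, so this loses nothing. Set $D'=\rho(D)$. This is a tree web on $[4,n]$ with $3(m-1)$ leaves and $D=\alpha(D')$. By induction, $[D']$ is a cluster or coefficient variable of $\C[\widehat{\Gr}(3,[4,n])]$.

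\cref{adding 4-leg is applying psi} gives $[D]=P_{123}\cdot\psi([D'])$. Since $\psi$ is a cluster quasihomomorphism \cite[Theorem 8.1]{plabictangle}, \cref{prop:similar} shows that $\psi$ sends each cluster variable of $\C[\widehat{\Gr}(3,[4,n])]$ to a frozen-monomial multiple of a cluster variable of $\C[\widehat{\Gr}(3,n)]$. It also sends frozen variables into the frozen group. Hence $[D]=M\cdot x$, where $M$ is a Laurent monomial in the frozen variables and $x$ is a cluster or frozen variable.

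\textbf{Removing the frozen factor.} This is the main obstacle. I would use two facts. First, $[D]$ is a polynomial in Pl\"ucker coordinates. Second, $[D]$ is irreducible in $\C[\widehat{\Gr}(3,n)]$; this holds for planar tree webs by \cite[Lemma 12.2]{FP16}, or can be checked directly since $[D]$ is a single non-elliptic web not divisible by any frozen Pl\"ucker. Cluster variables are also irreducible and are not divisible by frozen variables, because the ring is a UFD and cluster variables are irreducible (Geiss--Leclerc--Schr\"oer). Comparing the two factorizations of $[D]=M\cdot x$ then forces $M$ to be a unit, i.e.\ a nonzero scalar.

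A fallback, if irreducibility is not available, is a degree argument. Adding a $4$-leg pattern raises the Pl\"ucker degree of $[D]$ by exactly one: three new leaves each add one, and the leg $v_n$ is retained. The map $P_{123}\psi$ also raises degree by exactly one. One then needs to show that $x$, being the image of a cluster variable under this correspondence, has the same degree. I would first try irreducibility, since $[D]$ is a single web with an explicit expansion, as in \cref{4-leg and promotion correspondence ex}. In either case $[D]$ is a cluster or coefficient variable, which completes the induction.
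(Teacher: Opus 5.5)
Your proposal is correct and follows the paper's proof essentially step for step: erase a $4$-leg pattern, apply $[D]=P_{123}\cdot\psi([D'])$ from \cref{adding 4-leg is applying psi} together with the quasihomomorphism property of $\psi$, then remove the frozen factor using irreducibility of planar tree webs \cite[Lemma 12.2]{FP16}. Your UFD comparison with irreducible cluster variables just spells out what the paper states in one line, and your ``direct check'' and degree fallback are unnecessary (as written they are only sketches). One caveat, which the paper shares: the ranks of $\Gr(3,[4,n])$ and $\Gr(3,n)$ differ, so the frozen group of $\psi$'s target seed must contain more than the cyclic Pl\"uckers. For example, $P_{123}\,\psi(P_{4,n-1,n})=P_{1,4,n-1}P_{2,3,n}-P_{123}P_{4,n-1,n}$ is a mutable cluster variable of $\Gr(3,n)$. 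These extra frozen variables are still irreducible cluster variables of $\Gr(3,n)$, so your factorization argument goes through unchanged, except that $[D]$ may equal one of them rather than $M$ being a unit.
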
 

\begin{proof}
 Without loss of generality, by \cref{lem:add-marker-embed-gr}, we can assume that all boundary vertices are connected to the rest of the web.
 
We use induction on the number $n$ of boundary vertices.  By \cref{3m boundary vertices}, $n$ is a multiple of 3. 

A tree web with 3 boundary vertices corresponds to a Pl\"ucker coordinate $P_{abc}$, which is a cluster or coefficient variable. Assume the proposition holds for tree webs with $n-3$ boundary vertices.

By \cref{inductive generation prop}, $D$ is obtained by adding a $4$-leg pattern to a planar tree web $D'$ on $n-3$ boundary vertices.  Without loss of generality -- since cyclic shifts and reflections preserve the cluster algebra structure -- we assume the 4-leg pattern added for $D$ is on boundary vertices $n,1,2,3$.

By \cref{adding 4-leg is applying psi}, adding a 4-leg pattern to $D'$ corresponds to the map $\psi \cdot P_{123}$ to $[D']$. By the inductive hypothesis, $[D']$ is a cluster or coefficient variable; since $\psi$ is a cluster quasihomomorphism and $P_{123}$ is a frozen variable, $[D]$ is a cluster or coefficient variable (potentially multiplied by a frozen factor, that is a Laurent monomial in frozen variables).  

However, by \cite[Lemma 12.2]{FP16}, a planar tree web must be irreducible, so there is no extra frozen factor. 

Alternatively, we can prove directly that $[D]$ is irreducible. If not, we can write $[D]=[E][F]$, where $F$ is the web of a single frozen variable, and $E$ is another tensor diagram. Since $D$ is a tree web, the boundary vertices of $D$ have degree at most 1, and so is the superposition of $E$ and $F$ (because the tuple of degrees of boundary vertices, called the \emph{multidegree}, is determined by $[D]$ and independent of chosen diagrammtic representations, see \cite[Equation 4.3]{FP16}). Because $F$ is a frozen variable, it is a tripod with consecutive boundary vertices, say $a,a+1, a+2$. Then the boundary vertices of $E$ are supported on the complementary vertices $a+3,\dots,n,1,2,\dots,a-1$. But then $F$ can be drawn in an arbitrarily small neighborhood close to the boundary vertices $a,a+1, a+2$ so that $E$ and $F$ are disjoint, and there are no skein relations we can apply to turn their superposition into one connected component (namely the tree web $D$). This is a contradiction.
\end{proof}

\section{Every $\Gr(4,n)$ tree web represents a cluster variable}

The goal of this section is to prove \cref{thm:main2}, which says that  if a $\Gr(4,n)$ tensor diagram $D$ is a tree web, then 
$[D]$ is a cluster or coefficient variable for $\C[\widehat{\Gr}(4,n)]$.

To prove \cref{thm:main2}, we will show that every $\Gr(4,n)$ tree web can be created from a \emph{tetrapod} (which corresponds to a Pl\"ucker coordinate $P_{abcd}$) by using four local operations. We will then show that each of these operations correspond to applying a cluster quasi-homomorphism to the corresponding web invariant.

\subsection{Every $\Gr(4,n)$ tree web can be constructed from four local operations}

Recall the definition of $\Gr(4,n)$ webs from Definition \ref{Gr(4,n) web def}.

\begin{proposition}\label{prop:decompose}
Every $\Gr(4,n)$ tree web can be inductively generated from a tetrapod by using the four types of operations in \cref{fig:promotions_for_Gr4n}. 
\end{proposition}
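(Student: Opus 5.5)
The plan follows the proof of \cref{inductive generation prop}: a ``deepest'' internal black vertex of the tree sees only boundary leaves below it, so the subtree under it is one of a few rigid shapes. I expect these shapes to be exactly the local patterns whose addition is listed in \cref{fig:promotions_for_Gr4n}. I cannot see the figure, so the matching of shapes to operations below is my expectation rather than something I have checked.

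\textbf{Setup.} Let $T$ be a $\Gr(4,n)$ tree web. By \cref{lem:add-marker-embed-gr} I will assume every boundary vertex has degree $1$, and I will induct on the number of internal vertices. If $T$ has no internal black vertex, bipartiteness forces a single internal white vertex joined to four leaves by single edges. That is a tetrapod $P_{abcd}$, which is the base case. Otherwise I root $T$ at an internal white vertex $R$. I then choose an internal black vertex $B$ at maximal distance from $R$, with white parent $U$.

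\textbf{Local structure at $B$.} Every white child $W$ of $B$ has only black children farther from $R$ than $B$, so by maximality they are all boundary leaves. Edges to the boundary have multiplicity $1$, and $W$ has degree $4$ counting multiplicity. So $W$ has three leaf children if the edge $BW$ is single, and two leaf children if $BW$ is double. Condition (3) of \cref{Gr(4,n) web def} rules out $B$ being incident to two double edges, so there are three possibilities.
\begin{enumerate}
\item All four edges at $B$ are single. Then $B$ has three white children, each carrying three leaves.
\item The edge $UB$ is double. Then $B$ has two single-edge white children, giving six leaves (I expect this to be the ``6-leg pattern'').
\item One child edge $BW$ is double. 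Then $B$ has one child with two leaves and one child with three leaves, giving five leaves.
\end{enumerate}
In every case planarity, together with the fact that all leaves have degree $1$, forces the leaves below $B$ to be cyclically consecutive, exactly as in the $\SL_3$ argument. Using dihedral symmetry (\cref{cor:cycrefl}), I can rotate them into a standard position.

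\textbf{Erasing a pattern.} For each case I define an erasing map that deletes the subtree below $U$ through $B$ and replaces it with something smaller, while keeping the result a tree web.
\begin{itemize}
\item When $UB$ is single, I replace it by a single edge from $U$ to one boundary leaf, as in \cref{adding and erasing 4-leg pattern def}.
\item When $UB$ is double, the vertex $U$ still needs two more units of degree from below. I therefore expect to replace $B$ and its subtree by two single edges from $U$ to two boundary leaves.
\end{itemize}
I must check three things for the result: degrees are preserved, it is bipartite, it is planar, and it does not create an internal vertex with two double edges. The last point means checking $U$ in the double-edge cases; if it fails, I will erase one level higher instead. Each erasure strictly decreases the number of internal vertices, so induction applies. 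The shape being erased is then, by construction, one of the four operations in \cref{fig:promotions_for_Gr4n} applied to the smaller tree.

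\textbf{Main obstacle.} The hard step is matching the case list to the four operations of the figure. In particular, case (1) (nine leaves under a single-edged $B$) might not be a single listed operation. Then I would decompose it, for example by choosing $B$ differently: take a deepest internal vertex of either colour, and in case (1) look at one white child of $B$ together with its three leaves. That should exhibit a smaller pattern that is among the allowed ones. Another possibility is that one operation is a ``thin'' move that only reroutes a double edge. If so, I would prove a short lemma that any tree containing a double edge with two leaf-white children on one side admits that move, and handle it as a separate case of the induction.
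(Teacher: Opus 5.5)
Your proposal is correct and is essentially the paper's proof: root at an internal white vertex, take a deepest internal black vertex, and split into three cases. Those cases are exactly the 9-leg pattern (Case (3)), the 6-leg pattern when the parent edge is double (Case (4)), and the 5-leg pattern (Cases (1) and (2), distinguished by the order of the two children). So your worry about case (1) and the fallback plans are unnecessary; moreover, in the double-edge erasure the two new single leaf edges at $U$ can only reduce the number of double edges there, so condition (3) of \cref{Gr(4,n) web def} holds automatically.
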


\begin{figure}[htbp]
    \centering
    \includegraphics[width=\textwidth]{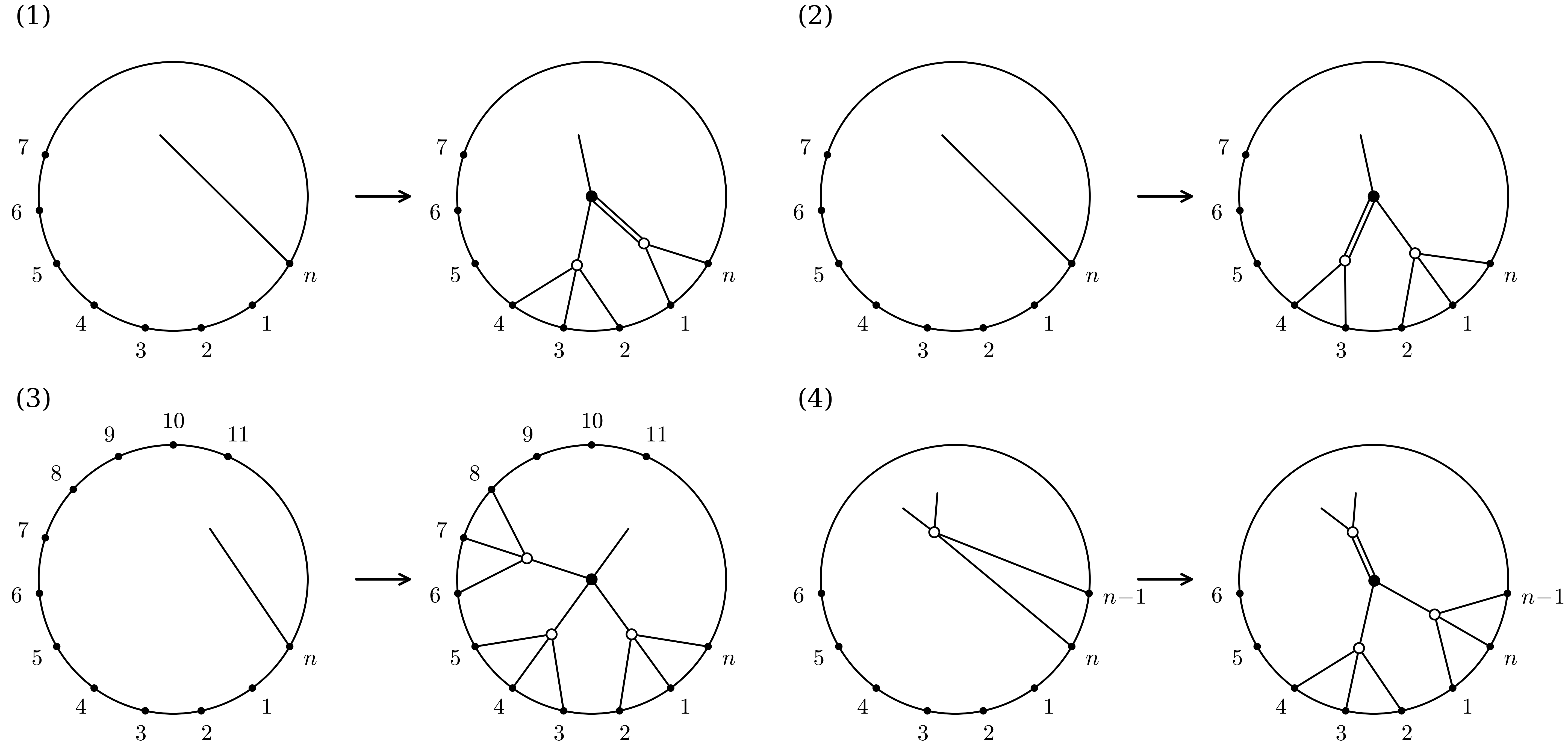}
\caption{(1) and (2): adding a 5-leg pattern; 
(3): adding a 9-leg pattern; 
(4): adding a 6-leg pattern.
}
\label{fig:promotions_for_Gr4n}
\end{figure}

\begin{proof}
Let $D$ be a $\Gr(4,n)$ tree web. Then the number $n$ of boundary vertices must be a multiple of $4$.
Choose any internal white vertex: we regard it as a root vertex and denote it $w_{\rm root}$.  Directing all edges towards it, we can regard the tree web diagram as a rooted tree.  We use induction on $n$ to prove the result.
  
If $n=4$, then $D$ must be a tetrapod, and we are done.  Otherwise, if $n>4$, then we can choose a black internal vertex $v^*$ which has maximum distance from $w_{\rm root}$. 

Let $e$ be the edge incident to $v^*$ which is directed towards $w_{\rm root}$.  If $e$ has mutiplicity $2$, then $v^*$ has two white children $w_1$ and $w_2$.  Both $w_1$ and $w_2$ must have all of their children being boundary vertices; otherwise $v^*$ would not have maximum distance from the root.
Thus $w_1$ and $w_2$ both have three black boundary children, and the local configuration is a 6-leg pattern, as in \Cref{fig:promotions_for_Gr4n}, Case (4).  Thus, $D$ can be obtained from a $\C[\widehat{\Gr}(4,n-4)]$ planar tree web by adding a 6-leg pattern.

On the other hand, if $e$ has multiplicity $1$, then $v^*$ has 
either three white children or two white children.  If it has three white children then as before, each of them must have all of their children being boundary vertices, and the local configuration is a 9-leg pattern, as in 
\Cref{fig:promotions_for_Gr4n}, Case (3).  Thus, $D$ can be obtained from a  $\C[\widehat{\Gr}(4,n-8)]$  planar tree web by adding a 9-leg pattern.

If $v^*$ has two white children, then one of them is connected to $v^*$ via an edge of multiplicity $1$, and the other is connected via an edge of multiplicity $2$.  In these cases, the local configuration must be a 5-leg pattern, as in 
\Cref{fig:promotions_for_Gr4n}, Cases (1) or (2).
Thus, $D$ can be obtained from a  $\Gr(4,n-4)$ planar tree web by adding a 9-leg pattern.
\end{proof}

\subsection{The local operations correspond to cluster quasi-homomorphisms}

\begin{lemma}\label{lem:case1}
Adding a 5-leg pattern   as in Case (1) of \Cref{fig:promotions_for_Gr4n} corresponds to the map 
$\CC[\widehat{\Gr}(4,[5,n])] \to \C[\widehat{\Gr}(4,n)]$ (called \emph{upper promotion} in \cite{even2023cluster}
and \emph{unary star promotion} in \cite{plabictangle}) which is defined as follows.
We first consider the  map $\Gr(4,n) \to \Gr(4, [5,n])$ which sends the $4 \times n$ matrix $Z=(Z_1\mid \ldots \mid Z_n)$ to the 
$4 \times (n-4)$ matrix $Z' = (Z'_5\mid \ldots \mid Z'_n)$, where $Z'_i=Z_i$ for $i \ne n$ and $Z'_n = Z_n + \frac{P_{2,3,4,n}}{P_{1,2,3,4}} Z_{1}$.
This map induces a $\C$-algebra homomorphism $\psi: \CC[\widehat{\Gr}(4,[5,n])] \to \C[\widehat{\Gr}(4,n)]$ which maps Pl\"ucker coordinates as follows:
\begin{align}
P_I \mapsto
\begin{cases}
P_I,
& \text{if } I \cap \{1,n\} \neq \{n\}, \\
P_I - \frac{P_{2,3,4,n}}{P_{1,2,3,4}} P_{(\{1\} \cup I\setminus \{n\})},
& \text{if } I \cap \{1,n\} = \{n\}.
\end{cases}
\end{align} 
This map is a cluster quasihomomorphism.
\end{lemma}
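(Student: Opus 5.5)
The plan has two parts. The claim that $\psi$ is a cluster quasihomomorphism is not something I would reprove. It is the $k=4$ instance of upper (unary star) promotion, proved in \cite{even2023cluster} and \cite[Theorem 8.1]{plabictangle}, exactly as the $\Gr(3,n)$ version was quoted before \cref{adding 4-leg is applying psi}. So I will cite that result. I will also check that the Plücker formula follows from $Z'_n = Z_n + \frac{P_{2,3,4,n}}{P_{1,2,3,4}}Z_1$ by multilinearity of the determinant, with the sign coming from reordering columns so that $1$ comes first. The real content is the correspondence between this map and adding the Case (1) 5-leg pattern. I will prove it along the lines of \cref{adding 4-leg is applying psi}: for $D=\alpha(D')$, show $[D] = \pm P_{1234}\cdot\psi([D'])$.

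\textbf{Step 1: reduce to a single vector.} Write $D=\Lambda\sqcup D_{\rm rest}$ and $D'=L_{v_nU}\sqcup D_{\rm rest}$, where $U$ is the white vertex at which the pattern attaches, and orient all edges toward $U$. Since $D_{\rm rest}$ does not involve $v_1,\dots,v_4$ or $v_n$, the other inputs at $U$ are fixed by $\psi$. This holds whether $U$ receives three single edges or a single edge and a double edge, because $\psi$ acts only through the vector at $v_n$. Hence it suffices to show that the vector $\vec b$ output by the 5-leg pattern along the edge to $U$ equals $P_{1234}\cdot\psi(\vec v_n)=P_{1234}\vec v_n + P_{234n}\vec v_1$, up to a global sign.

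\textbf{Step 2: compute the 5-leg pattern.} In Case (1), the black vertex $B$ has an outgoing single edge to $U$ and two incoming edges. One is a single edge from a white vertex $W_1$ carrying two legs. The other is a double edge from a white vertex $W_2$ carrying three legs, or the reverse arrangement depending on the figure. A white vertex with three vector legs $a,b,c$ outputs the covector $\det(a,b,c,-)$. A white vertex with two legs $a,b$ and an outgoing double edge outputs the bivector-dual $a\wedge b$ paired into $\Lambda^2V^*$ via $\epsilon_{abcd}$. Then $B$ applies $\mathcal B_3$. Label the legs $v_n,v_1$ on one white vertex and $v_2,v_3,v_4$ on the other. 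The key multilinear identity is the $\SL_4$ analogue of \cref{multi linear identity}:
\[
\mathcal B_3\bigl(\star(v_n\wedge v_1)\otimes \det(v_2,v_3,v_4,-)\bigr)=\det(v_1,v_2,v_3,v_4)\,v_n-\det(v_n,v_2,v_3,v_4)\,v_1 .
\]
I would prove it by noting that the left side lies in $\mathrm{span}(v_n,v_1)$. This holds because contracting $\epsilon$ against $\star(v_n\wedge v_1)$ returns $v_n\wedge v_1$, so the output is $v_n\wedge v_1$ contracted with the covector. The coefficients are then read off by contraction: $(v_n\wedge v_1)\lrcorner f = f(v_n)v_1 - f(v_1)v_n$ with $f=\det(v_2,v_3,v_4,-)$. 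Since $P_{234n}=-\det(v_n,v_2,v_3,v_4)$, the result matches $\pm(P_{1234}v_n+P_{234n}v_1)$.

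\textbf{Main obstacle.} I expect the hard part to be bookkeeping. First, I must fix which legs sit on the double-edge vertex versus the single-edge vertex in Case (1), since Case (2) is presumably the mirror arrangement, handled by the same map after $\refl^*$ or a different normalization. Second, I must track the factors of $\frac12$ in $\mathcal B_2,\mathcal B_3$ and the clockwise-ordering signs so that the identity holds with $P_{1234}$ exactly and no other frozen factor. If the signs or the placement do not match, I will try the other assignment of legs. Any leftover global sign or scalar is harmless, because cluster variables are determined up to such factors and the irreducibility argument of \cref{thm:main1} removes frozen factors.
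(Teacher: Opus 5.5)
Your proposal is correct and takes the same approach as the paper. The paper identifies the vector on the hanging edge of the Case (1) pattern as $(Z_n\wedge Z_1)\ast(Z_2\wedge Z_3\wedge Z_4)=P_{1234}Z_n+P_{234n}Z_1$, rescales by $P_{1234}$, and cites \cite[Theorem 8.1]{plabictangle} for the quasihomomorphism property; you get the same vector by explicit $\epsilon$-contraction instead of the shuffle product, and you also spell out the reduction to a single vector that the paper leaves implicit.

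One small slip: the degree-$4$ condition forces the white vertex with legs $v_n,v_1$ to attach to $B$ by the double edge, and the one with legs $v_2,v_3,v_4$ to attach by a single edge. Your first description in Step 2 has these swapped, and there is no ``reverse arrangement'' to consider. Your actual computation already uses the correct arrangement.
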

\begin{proof}
At the left-hand side of Case (1) of \Cref{fig:promotions_for_Gr4n}, the edge adjacent to 
the boundary vertex $n$ represents the vector $Z_n$.
Let $\ast$ denote the shuffle product
as in \cite[Definition 2.13]{plabictangle}.
The vector corresponding to the ``hanging edge'' coming off of the internal black vertex 
at the right-hand side of Case (1)
is $(n1) \ast (234)$, which is shorthand for 
\begin{align*}
(Z_n \wedge Z_1) \ast (Z_2 \wedge Z_3 \wedge Z_4) &= 
Z_n (Z_1 \wedge Z_2 \wedge Z_3 \wedge Z_4) - Z_1 (Z_n \wedge Z_2 \wedge Z_3 \wedge Z_4)\\
&= 
Z_n P_{1234} - Z_1 P_{n234} = Z_n P_{1234}+Z_1 P_{234n}
\end{align*}
In order to induce a map 
$\CC[\widehat{\Gr}(4,[5,n])] \to \C[\widehat{\Gr}(4,n)])$ which preserves degree, we rescale this vector by $P_{1234}$: this results in the map described in the lemma.
Finally, the fact that this map is a cluster algebra quasihomomorphism is proved in 
\cite[Theorem 8.1]{plabictangle}.
\end{proof}

\begin{lemma}\label{lem:case2}
Adding a 5-leg pattern as in Case (2) of \Cref{fig:promotions_for_Gr4n} corresponds to a map 
$\CC[\widehat{\Gr}(4,[5,n])] \to \C[\widehat{\Gr}(4,n)]$, which is a cluster quasihomomorphism.
\end{lemma}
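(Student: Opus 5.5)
We will follow the proof of \cref{lem:case1}: compute the tensor the new 5-leg pattern feeds into the rest of the web, recognize the resulting substitution as a known promotion map, and cite its quasihomomorphism property. In Case (2) the new black vertex should have one single-edge white child carrying three legs and one double-edge white child carrying two legs, arranged so that the legs are cyclically consecutive; rotating as in \cref{cor:cycrefl}, take them to be $n,1,2,3,4$ with $n$ the leg that survives when the pattern is erased. Orienting all edges toward the parent of the new black vertex, the single-edge white child outputs the covector $Z_a\wedge Z_b\wedge Z_c$ (via $\epsilon_{abcd}$) and the double-edge white child outputs the bivector $Z_d\wedge Z_e$. The black vertex is then of type $\mathcal{B}_3$, and its hanging edge carries
\[
(de)\ast(abc) = Z_d\,P_{e,a,b,c} - Z_e\,P_{d,a,b,c},
\]
a linear combination of two of the five leg vectors with Plücker coefficients, up to sign and the ordering conventions.

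Since the two surviving vectors are $Z_n$ and one of $Z_1$ or $Z_4$, I expect Case (2) to be either literally the Case (1) map of \cref{lem:case1} with the roles of the legs permuted, or its mirror image. In the mirror case, the hanging vector has the form $P_{1234}Z_n \pm P_{n123}Z_4$, i.e.\ the substitution $Z_n\mapsto Z_n\pm \frac{P_{n123}}{P_{1234}}Z_4$ after rescaling by the frozen variable $P_{1234}$. Conjugating by $\refl^*$ and $\cyc^*$ from \cref{cor:cycrefl} turns this into the map of \cref{lem:case1} on a relabeled index set. Because $\refl^*$ and $\cyc^*$ preserve cluster variables, compatibility and exchange relations, the composite $\refl^*\circ\psi\circ\refl^*$ (with appropriate cyclic shifts) is again a cluster quasihomomorphism, and it sends frozen variables to Laurent monomials in frozen variables.

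Next we check that the map on Plücker coordinates is degree preserving and agrees with the web evaluation, exactly as in \cref{adding 4-leg is applying psi}. The two incoming vectors from the rest of the web are unchanged, so the only check is that the hanging vector equals $P_{1234}$ times the image of $Z_n$, which is the computation above.

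The main obstacle is the bookkeeping: determining which two legs survive in the shuffle, with which signs and in which cyclic order, so that the map matches \cref{lem:case1} or its reflection exactly and not some other promotion. If the surviving pair turns out to be non-adjacent in a way that reflection cannot fix, the fallback is to identify the map with another promotion from \cite{plabictangle}, such as unary star promotion centered differently, and cite \cite[Theorem 8.1]{plabictangle} directly.
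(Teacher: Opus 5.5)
Your proposal is correct and is essentially the paper's argument: the paper observes from the figure that Case (2) is the image of Case (1) under a reflection and rotation of the indices, and concludes by \cref{cor:cycrefl} and \cref{lem:case1}; your explicit hanging-vector computation is extra verification that the paper omits. The only slip is in the labeling. In the mirror case the surviving leg is the clockwise-last one, so with legs labeled $1,\dots,5$ the substitution is $Z_5\mapsto Z_5-\frac{P_{1235}}{P_{1234}}Z_4$, with the double-edge child on $\{4,5\}$, rather than a pair $\{4,n\}$ inside the block $n,1,2,3,4$; no fallback to another promotion is needed.
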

\begin{proof}
As one can see from \Cref{fig:promotions_for_Gr4n}, Case (2) is obtained from Case (1) by 
a reflection as well as a rotation of the indices. 
By \cref{cor:cycrefl}, 
 reflecting and rotating indices preserve the cluster algebra structure, so by \cref{lem:case1}, we are done.
\end{proof}

In what follows, we use the notation  $\langle ab|cde|fgh \rangle:=
    P_{abcd}P_{efhg} - P_{abce}P_{dfgh} + P_{abde}P_{cfgh}.$
    If $f$ is a regular function, we also use the notation
    $ \C[\widehat{\Gr}(4,n)][f^{-1}]$ to denote the localization of 
     $\C[\widehat{\Gr}(4,n)]$ at the function $f$. In \cref{lem:9leg}, $f$ will be a cluster variable, which is mutable in the usual cluster structure on the Grassmannian; localizing at $f$ corresponds to freezing this cluster variable.

\begin{lemma}\label{lem:9leg}
Adding a 9-leg pattern as in Case (3) of \Cref{fig:promotions_for_Gr4n} corresponds to a map 
$\CC[\widehat{\Gr}(4,[9,n])] \to \C[\widehat{\Gr}(4,n)]$ (called \emph{unary spurion promotion} in \cite[Definition 5.5]{plabictangle}) which is defined as follows.
We first consider the  map $\Gr(4,n) \to \Gr(4, [9,n])$ which sends the $4 \times n$ matrix $Z=(Z_1\mid \ldots \mid Z_n)$ to the 
$4 \times (n-8)$ matrix $Z' = (Z'_5\mid \ldots \mid Z'_n)$, where $Z'_i=Z_i$ for $i \ne n$ and 
\begin{equation}\label{eq:Z2}
Z'_n = \frac{1}{{\langle 12 |345 | 678 \rangle}}
(Z_n \wedge Z_1 \wedge Z_2) \ast 
(Z_3 \wedge Z_4 \wedge Z_5) \ast(Z_6 \wedge Z_7 \wedge Z_8).
\end{equation}
This map induces a $\C$-algebra homomorphism\footnote{This map can be written out explicitly in terms of Pl\"ucker coordinates but it is not needed here.} $\psi: \CC[\widehat{\Gr}(4,[9,n])] \to \C[\widehat{\Gr}(4,n)][{\langle 12 |345 | 678 \rangle}]^{-1}$ which is a cluster quasihomomorphism.
\end{lemma}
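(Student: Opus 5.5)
The proof will follow the pattern of \cref{lem:case1} and \cref{adding 4-leg is applying psi}: compute the vector carried by the hanging edge of the 9-leg pattern, match it (up to rescaling) with $Z'_n$ in \eqref{eq:Z2}, and then invoke \cite{plabictangle} for the quasihomomorphism property.

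First, orient all edges of the tree web toward the parent vertex $U$ of the 9-leg pattern, as in the proof of \cref{adding 4-leg is applying psi}. Then compute the vector on the single edge leaving the internal black vertex $v^*$. Each of the three white children of $v^*$ has three black boundary children, namely $\{n,1,2\}$, $\{3,4,5\}$ and $\{6,7,8\}$ by planarity. By the $\SL_4$ reading rules, each white child outputs the covector $\det(Z_a,Z_b,Z_c,-)$, i.e.\ the element $Z_a\wedge Z_b\wedge Z_c\in\Lambda^3V\cong V^*$. The black vertex $v^*$ then applies $\mathcal B_1:\Lambda^3V^*\to V$ to the three covectors. Using the identification of the contraction of three trivectors via $\epsilon$ with the iterated shuffle product of \cite[Definition 2.13]{plabictangle}, the hanging vector is
\[
\vec b=(Z_n\wedge Z_1\wedge Z_2)\ast(Z_3\wedge Z_4\wedge Z_5)\ast(Z_6\wedge Z_7\wedge Z_8),
\]
up to a global sign fixed by the clockwise convention.

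Second, I will expand $\vec b$ by multilinearity, splitting off one vector of the first triple at a time, in analogy with \cref{multi linear identity}. This should give
\[
\vec b=\langle 12|345|678\rangle\, Z_n-\langle n2|345|678\rangle\, Z_1+\langle n1|345|678\rangle\, Z_2 .
\]
This confirms the expected structure of $Z'_n$: it is $Z_n$ plus a combination of $Z_1,Z_2$, with the coefficient of $Z_n$ equal to the denominator in \eqref{eq:Z2}. Hence $Z'_n=\vec b/\langle 12|345|678\rangle$. The rest of the web involves only $Z_9,\dots,Z_{n-1}$ and $Z_n$, so applying the substitution $Z_n\mapsto Z'_n$ yields $[D]=\langle 12|345|678\rangle\cdot\psi([D'])$. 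The map $\psi$ is a ring homomorphism because it is the pullback of a rational map of matrices. Its values lie in the localization at $\langle 12|345|678\rangle$, since the only denominator introduced is that function.

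Third, for the cluster-theoretic part, I will identify this $\psi$ with the unary spurion promotion of \cite[Definition 5.5]{plabictangle} and cite the theorem there, which states that it is a cluster quasihomomorphism into the localized cluster structure in which $\langle 12|345|678\rangle$ is frozen. The main obstacle is this identification: the normalization and sign conventions for $\ast$, and the index labeling (the legs are $n,1,\dots,8$), must match those of \cite{plabictangle} exactly. I will check this by comparing the images of a few Pl\"ucker coordinates $P_I$ with $n\in I$ under both definitions, using cyclic and reflection symmetry (\cref{cor:cycrefl}) to align the labels if necessary.
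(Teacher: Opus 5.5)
Your proposal is correct and follows essentially the same route as the paper: compute the hanging-edge vector as the iterated shuffle product $(Z_n\wedge Z_1\wedge Z_2)\ast(Z_3\wedge Z_4\wedge Z_5)\ast(Z_6\wedge Z_7\wedge Z_8)$, expand it as $\langle 12|345|678\rangle Z_n-\langle n2|345|678\rangle Z_1+\langle n1|345|678\rangle Z_2$, and normalize to obtain $Z'_n$. The paper then likewise cites \cite{plabictangle} (Definition 5.5 and Theorem 8.7) for the quasihomomorphism property, simply asserting that this map is a special case of spurion promotion, whereas you additionally plan to check that the sign and labeling conventions match.
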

\begin{proof}
At the left-hand side of Case (3) of \Cref{fig:promotions_for_Gr4n}, the edge adjacent to 
the boundary vertex $n$ represents the vector $Z_n$.
The vector corresponding to the ``hanging edge'' coming off of the internal black vertex 
at the right-hand side of Case (3)
is 
\begin{equation*}
(Z_n \wedge Z_1 \wedge Z_2) \ast 
(Z_3 \wedge Z_4 \wedge Z_5) \ast(Z_6 \wedge Z_7 \wedge Z_8),
\end{equation*}
which can be written as a linear combination of the vectors $Z_n$, $Z_1$, and $Z_2$.
If we then rescale by  a constant to make the coefficient of $Z_n$ be $1$, we get 
    $Z_n -Z_1 \frac{\langle n2 | 345 | 678\rangle}{\langle 12 | 345 | 678 \rangle} + Z_2 \frac{\langle n1 | 345 | 678 \rangle}{\langle 12 |345 | 678 \rangle},$ which is equal to the quantity in \eqref{eq:Z2}.

    The above map induced by sending $Z$ to $Z'$ is a special case of the spurion promotion which was defined in \cite[Definition 5.5]{plabictangle}. The fact that it is a cluster algebra quasihomomorphism was proved in  \cite[Theorem 8.7]{plabictangle}.
\end{proof}

\begin{lemma}\label{lem:6leg}
Adding a 6-leg pattern   as in Case (4) of \Cref{fig:promotions_for_Gr4n} corresponds to a map 
$\CC[\widehat{\Gr}(4,[5,n])]\to \C[\widehat{\Gr}(4,n)]$ which is 
defined as follows.
We first consider the  map $\Gr(4,n) \to \Gr(4, [5,n])$ which sends the $4 \times n$ matrix $Z=(Z_1\mid \ldots \mid Z_n)$ to the 
$4 \times (n-4)$ matrix $Z' = (Z'_5\mid \ldots \mid Z'_n)$, where 
\begin{equation}\label{eq:6first}
Z'_j = -\Bigl(Z_j+\frac{\Pl{2,3,4,j}}{\Pl{1234}}\,Z_1\Bigr)
\quad \text{ for }j=n-1,n,
\end{equation}
and $Z'_i=Z_i$ otherwise.
Thus
\begin{equation}\label{eq:6}
Z'_{n-1}\wedge Z'_n = (Z_{n-1} \wedge Z_n)  - (Z_{1} \wedge Z_{n-1}) \frac{P_{2,3,4,n}}{P_{1234}} + (Z_1 \wedge Z_n) \frac{P_{2,3,4,n-1}}{P_{1234}}.
\end{equation}
Let 
\begin{align*}
\Rv{j_0}{ijk}&=\Pl{2,3,4,j_0}\Pl{1ijk}-\Pl{1234}\Pl{ijk\,j_0},\\
\Qv{ij}&=\Pl{1234}\Pl{ij,n-1,n}-\Pl{2,3,4,n}\Pl{1ij,n-1}+\Pl{2,3,4,n-1}\Pl{1ij,n}.
\end{align*}
This map induces a $\CC$-algebra homomorphism 
$\Phi_n: \CC[\widehat{\Gr}(4,[5,n])]\to \C[\widehat{\Gr}(4,n)]$ which maps Pl\"ucker coordinate as follows:
for $S\subseteq\{5,\dots,n\}$
\begin{equation}\label{eq:6Plucker}
\Phi_n(\Pl{S}')=
\begin{cases}
\Pl{S}, & S\cap\{n-1,n\}=\emptyset,\\[3pt]
\Rv{j_0}{ijk}\big/\Pl{1234}, & S=\{i,j,k,j_0\},\ j_0\in\{n-1,n\},\\[3pt]
\Qv{ij}\big/\Pl{1234}, & S=\{i,j,n-1,n\}.
\end{cases}
\end{equation}

\end{lemma}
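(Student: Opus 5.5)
The plan is to imitate the proof of \cref{adding 4-leg is applying psi}: a purely local tensor computation at the vertex where the 6-leg pattern is attached. It has two parts. First, check that the formulas \eqref{eq:6} and \eqref{eq:6Plucker} really describe the homomorphism induced by $Z\mapsto Z'$ from \eqref{eq:6first}. Second, show that replacing $D'$ by $D=$ ($D'$ with a 6-leg pattern added) multiplies the web invariant by $P_{1234}$ and applies $\Phi_n$, i.e.\ $[D]=\pm P_{1234}\,\Phi_n([D'])$. The statement only asserts this correspondence; the quasihomomorphism property is left for later, so it is not addressed here.

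\textbf{Step 1: the formulas.} Expand $Z'_{n-1}\wedge Z'_n$ bilinearly using \eqref{eq:6first}. The two minus signs cancel, and the term $Z_1\wedge Z_1$ vanishes, which gives \eqref{eq:6}. For a $4$-subset $S\subseteq[5,n]$, $\Phi_n(P'_S)$ is the determinant of the corresponding columns of $Z'$.
\begin{itemize}
\item If $S$ avoids $\{n-1,n\}$, nothing changes.
\item If $S$ contains exactly one $j_0\in\{n-1,n\}$, multilinearity gives $-(P_{ijk j_0}+\tfrac{P_{234j_0}}{P_{1234}}P_{ijk1})$. Reordering the columns to $P_{1ijk}$ introduces a sign, and matching it with the sign in $\Rv{j_0}{ijk}$ is a small bookkeeping check.
\item If $S=\{i,j,n-1,n\}$, substitute \eqref{eq:6} into $Z_i\wedge Z_j\wedge(Z'_{n-1}\wedge Z'_n)$. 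This gives exactly $\Qv{ij}/P_{1234}$.
\end{itemize}

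\textbf{Step 2: the web computation.} Read the picture in Case (4) of \Cref{fig:promotions_for_Gr4n} as follows. In $D'$, the boundary leaves $n-1,n$ hang by single edges from an internal white vertex $U$. In $D$, these two edges are replaced by a double edge from $U$ to a new black vertex $v^*$. The vertex $v^*$ has single edges to white vertices $w_1,w_2$, and the boundary legs are $\{n-1,n,1\}$ at $w_1$ and $\{2,3,4\}$ at $w_2$. In $D'$ the labels $1,2,3,4$ are boundary vertices of degree $0$.

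Orient all edges toward $U$ and evaluate at $U$. The remaining edges at $U$ come from $D_{\mathrm{rest}}$, which uses no vector among $Z_1,\dots,Z_4,Z_{n-1},Z_n$, since boundary vertices are univalent. So these inputs are fixed by $\Phi_n$, and it suffices to compare the bivector entering $U$ in the two webs. In $D'$ it is $Z_{n-1}\wedge Z_n$. In $D$ it is $\mathcal B_2$ applied to the covectors output by $w_1,w_2$, namely $\det(Z_{n-1},Z_n,Z_1,-)$ and $\det(Z_2,Z_3,Z_4,-)$.

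The key identity is the $\SL_4$ analogue of \cref{multi linear identity}: the shuffle product $(Z_{n-1}\wedge Z_n\wedge Z_1)\ast(Z_2\wedge Z_3\wedge Z_4)$ equals
\[
P_{1234}\,Z_{n-1}\wedge Z_n - P_{n234}\,Z_{n-1}\wedge Z_1 + P_{n-1,234}\,Z_n\wedge Z_1 .
\]
Rewriting $P_{n234}=-P_{234n}$ and similarly for $n-1$, and flipping the wedge orders, this is $P_{1234}$ times the right side of \eqref{eq:6}. I would verify the identity by multilinearity, checking it on basis vectors, or by citing the shuffle-product formalism of \cite[Definition 2.13]{plabictangle} as in \cref{lem:case1}. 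Since $\Phi_n$ is induced by substituting $Z'$ for $Z$, and the web invariant is multilinear in the bivector at $U$, this gives $[D]=\pm P_{1234}\Phi_n([D'])$.

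\textbf{Expected obstacle.} The main difficulty is bookkeeping of normalizations and signs: the factor $\tfrac12$ in $\mathcal B_2$, the clockwise ordering conventions at each vertex, and the overall root-sign ambiguity. Any global sign is harmless, because cluster variables are considered up to such factors and one can adjust by the sign choice in \eqref{eq:6first}. I would pin down the constant by evaluating both sides on a convenient specialization, e.g.\ $Z_1,\dots,Z_4$ the standard basis.
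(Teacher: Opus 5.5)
Your proposal is correct and follows the paper's argument. The paper likewise identifies the 6-leg pattern with the bivector $(Z_{n-1}\wedge Z_n\wedge Z_1)\ast(Z_2\wedge Z_3\wedge Z_4)$, expands it, rescales by $P_{1234}$ to obtain \eqref{eq:6}, and notes that \eqref{eq:6Plucker} follows. You simply carry out explicitly the sign bookkeeping for \eqref{eq:6Plucker} and the local evaluation at $U$ (as in \cref{adding 4-leg is applying psi}), both of which the paper leaves implicit.

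One small correction to your sign remark: the minus sign in \eqref{eq:6first} negates both $Z'_{n-1}$ and $Z'_n$, so it cancels in $Z'_{n-1}\wedge Z'_n$ and cannot absorb a global sign in $[D]=\pm P_{1234}\Phi_n([D'])$. Its only effect is to fix the sign in the $\Rv{j_0}{ijk}$ case. The global sign is harmless solely because web invariants are defined only up to the root-ordering sign.
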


\begin{proof}

Note that the $6$-leg pattern 
in Case (4) of \Cref{fig:promotions_for_Gr4n} represents the
$2$-vector
\[
(Z_{n-1} \wedge Z_n \wedge Z_1)\ast(Z_2 \wedge Z_3 \wedge Z_4).
\]
Expanding gives
\begin{align*}
    (Z_{n-1} \wedge Z_n) P_{1234} &- (Z_{n-1} \wedge Z_1) P_{n,2,3,4} + (Z_n \wedge Z_1) P_{n-1,2,3,4}\\
    &= 
      (Z_{n-1} \wedge Z_n) P_{1234} - (Z_{1} \wedge Z_{n-1}) P_{2,3,4,n} + (Z_1 \wedge Z_n) P_{2,3,4,n-1}
\end{align*}
Now if we rescale by 
$P_{1234}$, we get \eqref{eq:6}.
It is easy to see that \eqref{eq:6Plucker} follows from \eqref{eq:6}
\end{proof}

We will prove that this map
is a cluster algebra quasihomomorphism in \cref{sec:6leg}.

\subsection{The 6-leg pattern map is a cluster quasihomomorphism}\label{sec:6leg}

\begin{theorem}\label{thm:6leg}
    The 6-leg pattern map from 
    \cref{lem:6leg} is a cluster quasihomomorphism.
\end{theorem}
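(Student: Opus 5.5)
The plan is to verify \cref{def:quasi} directly for one well-chosen pair of seeds and then propagate with \cref{prop:similar}. The target is $\Phi_n:\C[\widehat{\Gr}(4,[5,n])]\to\C[\widehat{\Gr}(4,n)]$. The source has rank $(4-1)(n-4-4-1)=3(n-9)$ and the target has rank $3(n-5)$, so they differ by $12$. I will therefore follow the convention used for the other promotions in \cite{plabictangle}: a fixed collection of $12$ target cluster variables, all involving the indices $1,2,3,4$ and $n-1,n$, is frozen (or equivalently treated as a fixed subseed). One then compares with a seed of the correspondingly frozen cluster algebra.

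First, I would pick a source seed $\Sigma'$ for $\Gr(4,[5,n])$ adapted to the special pair $\{n-1,n\}$. Concretely, I take a cyclic rotation of the rectangles seed $\Sigma_{4,n-4}$ (using \cref{cor:cycrefl}) in which most Pl\"ucker coordinates avoid $n-1,n$. The remaining ones should be of two kinds: those containing exactly one of $n-1,n$, which become $\Rv{j_0}{ijk}/\Pl{1234}$, and those containing both, which become $\Qv{ij}/\Pl{1234}$.

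Second, I would identify a target seed $\Sigma$ for $\Gr(4,n)$ that contains all these images up to frozen factors. Coordinates avoiding $\{n-1,n\}$ map to themselves. For the others, note that $\Rv{j_0}{ijk}$ is exactly $P_{1234}$ times the upper-promotion image (\cref{lem:case1}) of $P_{ijkj_0}$. Hence the seed can be built by applying the unary star promotion twice, once moving $Z_n$ and once moving $Z_{n-1}$ along $Z_1$. The factorization
\[
\Phi_n=\psi_{n-1}\circ\psi_n
\]
(up to the sign and the index-spreading map of \cref{lem:add-marker-embed-gr}) is in fact the key observation. Equation \eqref{eq:6first} is exactly the composite of two upper promotions sharing the same auxiliary columns $1,2,3,4$. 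The composite of quasihomomorphisms is a quasihomomorphism.

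The subtlety is that the second promotion is applied with $1,2,3,4$ already used, so the intermediate Grassmannian is not literally on consecutive indices. I would handle this by writing $\Phi_n$ as upper promotion for $n$ from $\Gr(4,[5,n])$ to $\Gr(4,[1,n]\setminus\{?\})$, composed with a variant promotion for $n-1$. Failing that, I would directly check conditions (1)–(3) of \cref{def:quasi} on the explicit seed. For condition (1), I must show $\Qv{ij}$ and $\Rv{j_0}{ijk}$ are cluster variables in $\Sigma$. For condition (2), the exchange ratios must be preserved, and this reduces to three-term Pl\"ucker relations among the $R$'s and $Q$'s. In particular I would need identities such as
\[
\Qv{ij}\,\Pl{1234}=\Rv{n-1}{\cdots}\Rv{n}{\cdots}-\cdots,
\]
derived from the Grassmann–Pl\"ucker relations.

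I expect the main obstacle to be this exchange-ratio verification near the vertices labeled by $\Qv{ij}$. Both moved columns interact there, and the frozen factors $\Pl{1234}$ and $\Pl{234,n-1}$ must cancel exactly in each ratio. I would first try the composition argument to avoid the computation entirely. If the intermediate cluster structure does not match, I would fall back on explicitly computing the exchange ratios for a small case ($n=10$ or $11$) and then generalizing by the periodic pattern of the rectangles quiver.
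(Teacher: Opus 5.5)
Your rank count ($3(n-9)$ for the source, $3(n-5)$ for the target, so twelve target variables must be frozen) is correct. So is your observation that $\Rv{j_0}{ijk}$ is, up to sign, $\Pl{1234}$ times the upper-promotion image of $\Pl{ijk\,j_0}$.

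The gap is the ``key observation'' $\Phi_n=\psi_{n-1}\circ\psi_n$. It does not exhibit $\Phi_n$ as a composite of cluster quasi-homomorphisms, and nothing in your proposal replaces it. A promotion with auxiliary columns $1,2,3,4$ needs a source whose index set avoids $\{1,2,3,4\}$. The target $\C[\widehat{\Gr}(4,n)]$ of $\psi_n$ contains those columns, so no Grassmannian cluster algebra can serve as the middle term. The natural literal reading is to perform $Z_n\mapsto Z_n+\frac{\Pl{234n}}{\Pl{1234}}Z_1$ inside $\Gr(4,n)$ while keeping columns $1,\dots,4$. But this puts the new $Z_n$ in $\langle Z_2,Z_3,Z_4\rangle$, so it sends the cluster variable $\Pl{234n}$ to $0$, and it is not a quasi-homomorphism. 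Moreover, the $\Qv{ij}$ are not upper-promotion images at all, and they are exactly where both moved columns interact. So the genuinely new content of the theorem sits precisely where your factorization is silent.

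Your fallback, checking \cref{def:quasi} on explicit seeds, is the paper's strategy, but its entire content is missing. You never produce a cluster of $\Gr(4,n)$ that contains $\Phi_n$ of a source cluster with the right induced subquiver. The paper does this as follows:
\begin{enumerate}
\item Start from the rectangles seed of $\Gr(4,n)$ in the cyclic order $5,6,\dots,n,1,2,3,4$, and apply a fixed sequence of $13$ mutations $\mu_{v_{1235}},\mu_{v_{1256}},\mu_{v_{1,2,5,n}},\dots,\mu_{v_{5,n-3,n-2,n-1}}$.
\item The resulting seed contains the image of the rectangles cluster of $\Gr(4,[5,n])$.
\item The twelve variables to freeze are the five images $\Rv{n-1}{n-4,n-3,n-2}$, $\Qv{n-3,n-2}$, $\Qv{5,n-2}$, $\Qv{56}$, $\Rv{n}{567}$ of source frozen variables, together with $\Pl{2356}$, $\Pl{2567}$, $\Pl{3567}$, $\Pl{2,3,4,n-1}$, $\Pl{1,5,n-1,n}$, $\Pl{1,n-3,n-2,n-1}$, $\Rv{n}{256}$.
\end{enumerate}
The sequence has fixed length, and increasing $n$ only adds a column of untouched Pl\"ucker variables on the left. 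That is why the $n=10,11$ computations extend to all $n$; ``generalizing by the periodic pattern'' has nothing to generalize until such a uniform sequence is found.

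Once the seed is in hand, condition (2) is a bookkeeping check of frozen factors, not a Pl\"ucker-relation computation. For $n=10$, $\Phi_n(\hat y_\Sigma(\Pl{5679}))=\Rv{9}{568}\Rv{10}{567}/(\Pl{5678}\Qv{56}\Pl{1234})$, which is accounted for by the extra arrow $\Pl{1234}\to\Rv{9}{567}$. The Pl\"ucker identities you anticipate are needed elsewhere: to confirm that the mutations actually produce the $\Rv{j_0}{ijk}$ and $\Qv{ij}$.
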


Before proving the theorem, we examine the cases
$n=10$ and $n=11$ to gain intuition.

\subsection*{The case $n=10$.}
We start with the rectangles seed $\Sigma$ for $\Gr(4,\{5,\dots,10\})\cong\Gr(4,6)$, see \cref{fig:rect1}. We indicate the frozen nodes in gray and the mutable nodes in red.

\begin{figure}[h]
\begin{center}
\resizebox{0.46\linewidth}{!}{%
\begin{tikzpicture}[
  >=Stealth,
  mut/.style={draw=srcred, text=srcred, rounded corners, align=center, inner sep=3pt, minimum width=1.9cm, minimum height=0.72cm, font=\scriptsize\bfseries, fill=white},
  frz/.style={draw, rounded corners, align=center, inner sep=3pt, minimum width=1.9cm, minimum height=0.72cm, font=\scriptsize, fill=gray!20},
  arr/.style={->, line width=0.55pt}
]
\node[frz] (E) at (-2.9,0.9) {$\Pl{5678}$};
\node[mut] (R11) at (0,0)    {$\Pl{5679}$};
\node[frz] (R12) at (3.0,0)  {$\Pl{567\,10}$};
\node[mut] (R21) at (0,-1.7) {$\Pl{5689}$};
\node[frz] (R22) at (3.0,-1.7){$\Pl{569\,10}$};
\node[mut] (R31) at (0,-3.4) {$\Pl{5789}$};
\node[frz] (R32) at (3.0,-3.4){$\Pl{589\,10}$};
\node[frz] (R41) at (0,-5.1) {$\Pl{6789}$};
\node[frz] (R42) at (3.0,-5.1){$\Pl{789\,10}$};
\begin{pgfonlayer}{background}
\draw[arr] (E) -- (R11);   \draw[arr] (R11) -- (R12); \draw[arr] (R11) -- (R21);
\draw[arr] (R22) -- (R11); \draw[arr] (R21) -- (R22); \draw[arr] (R21) -- (R31);
\draw[arr] (R32) -- (R21); \draw[arr] (R31) -- (R32); \draw[arr] (R31) -- (R41);
\draw[arr] (R42) -- (R31);
\end{pgfonlayer}
\end{tikzpicture}
}
\end{center}
\caption{The rectangles seed $\Sigma$ for 
$\Gr(4,\{5,6,\dots,10\}$\label{fig:rect1}}
\end{figure}

When we apply the map $\Phi_n$ to $\Sigma$, we get the following (where the mutable cluster variables are in red): 
\[
\begin{aligned}
{\color{srcred}\Pl{5679}}&\mapsto {\color{srcred}\Rv{9}{567}}/\Pl{1234}, &
{\color{srcred}\Pl{5689}}&\mapsto {\color{srcred}\Rv{9}{568}}/\Pl{1234}, &
{\color{srcred}\Pl{5789}}&\mapsto {\color{srcred}\Rv{9}{578}}/\Pl{1234},\\
{\color{black}\Pl{5678}}&\mapsto {\color{black}\Pl{5678}}, &
\Pl{6789}&\mapsto \Rv{9}{678}/\Pl{1234},&
\Pl{567\,10}&\mapsto \Rv{10}{567}/\Pl{1234}, \\
\Pl{569\,10}&\mapsto \Qv{56}/\Pl{1234}, &
\Pl{589\,10}&\mapsto \Qv{58}/\Pl{1234}, &
\Pl{789\,10}&\mapsto \Qv{78}/\Pl{1234}.
\end{aligned}
\]

\cref{fig:imagerect1} shows a seed which 
contains the image of the rectangles cluster for 
$\Gr(4,\{5,\dots,10\})$. Frozen variables are colored in gray; the images under $\Phi_n$ of the initial cluster variables are marked in bold; and the images of the initial mutable cluster variables are colored in red.  Clearly properties (1) and (3) of \cref{def:quasi} are satisfied for the rectangles seed $\Sigma$ and the new seed 
$\overline{\Sigma}$.  In particular, the induced quiver on the images of the mutable cluster variables is isomorphic to the induced quiver on the original mutable cluster variables, and this isomorphism takes
each mutable cluster variable $x$ to $\Phi_n(x)$ (up to a frozen factor). 
Note that the unique extra arrow at an image vertex is
$\Pl{1234}\to{\color{srcred}\Rv{9}{567}}$.
\begin{figure}
\begin{center}
\resizebox{0.97\linewidth}{!}{%
\begin{tikzpicture}[
  >=Stealth,
  mut/.style={draw, rounded corners, align=center, inner sep=3pt, minimum width=1.75cm, minimum height=0.78cm, font=\scriptsize, fill=white},
  imgmut/.style={draw, rounded corners, align=center, inner sep=3pt, minimum width=1.75cm, minimum height=0.78cm, font=\scriptsize\bfseries, fill=white},
  redmut/.style={draw=srcred, text=srcred, rounded corners, align=center, inner sep=3pt, minimum width=1.75cm, minimum height=0.78cm, font=\scriptsize\bfseries, fill=white},
  frz/.style={draw, rounded corners, align=center, inner sep=3pt, minimum width=1.75cm, minimum height=0.78cm, font=\scriptsize, fill=gray!20},
  imgfrz/.style={draw, rounded corners, align=center, inner sep=3pt, minimum width=1.75cm, minimum height=0.78cm, font=\scriptsize\bfseries, fill=gray!20},
  arr/.style={->, line width=0.55pt}
]
\node[imgfrz] (P5678)  at (-3.4, 1.1)  {$\BPl{5678}$};
\node[redmut] (R9567)  at (0,0)        {$\BRv{9}{567}$};
\node[imgmut] (R10567) at (3.6,0)      {$\BRv{10}{567}$};
\node[redmut] (R9568)  at (0,-2.1)     {$\BRv{9}{568}$};
\node[imgmut] (Q56)    at (3.6,-2.1)   {$\BQv{56}$};
\node[redmut] (R9578)  at (0,-4.2)     {$\BRv{9}{578}$};
\node[imgmut] (Q58)    at (3.6,-4.2)   {$\BQv{58}$};
\node[imgmut] (R9678)  at (0,-6.3)     {$\BRv{9}{678}$};
\node[imgmut] (Q78)    at (3.6,-6.3)   {$\BQv{78}$};
\node[frz] (P1234)  at (-3.4,-1.6)  {$\Pl{1234}$};
\node[mut] (P2349)  at (-3.4,-4.0)  {$\Pl{2349}$};
\node[frz] (P6789)  at (-3.4,-6.3)  {$\Pl{6789}$};
\node[frz] (P78910) at (-3.4,-8.6)  {$\Pl{789\,10}$};
\node[mut] (P1789)  at (0,-8.6)     {$\Pl{1789}$};
\node[mut] (P15910) at (7.2,-5.4)   {$\Pl{159\,10}$};
\node[frz] (P18910) at (7.2,-7.8)   {$\Pl{189\,10}$};
\node[frz] (P2345)  at (10.8,-6.6)  {$\Pl{2345}$};
\node[mut] (R10256) at (7.2,-1.0)   {$\Rv{10}{256}$};
\node[frz] (P12310) at (7.2, 1.3)   {$\Pl{123\,10}$};
\node[frz] (P12910) at (7.2,-3.2)   {$\Pl{129\,10}$};
\node[mut] (P2567)  at (10.8, 0.4)  {$\Pl{2567}$};
\node[mut] (P3567)  at (14.2, 0.4)  {$\Pl{3567}$};
\node[frz] (P4567)  at (17.6, 0.4)  {$\Pl{4567}$};
\node[mut] (P2356)  at (14.2,-2.0)  {$\Pl{2356}$};
\node[frz] (P3456)  at (17.6,-2.0)  {$\Pl{3456}$};
\begin{pgfonlayer}{background}
\draw[arr] (P5678)  -- (R9567);
\draw[arr] (R9567)  -- (R10567);
\draw[arr] (R9567)  -- (R9568);
\draw[arr] (Q56)    -- (R9567);
\draw[arr] (R9568)  -- (Q56);
\draw[arr] (R9568)  -- (R9578);
\draw[arr] (Q58)    -- (R9568);
\draw[arr] (R9578)  -- (Q58);
\draw[arr] (R9578)  -- (R9678);
\draw[arr] (Q78)    -- (R9578);
\draw[arr] (Q56)    -- (Q58);
\draw[arr] (Q58)    -- (Q78);
\draw[arr] (P1234)  -- (R9567);
\draw[arr] (R10567) to[bend left=8] (P1234);
\draw[arr] (P2349)  -- (P1234);
\draw[arr] (P2349)  -- (P6789);
\draw[arr] (R9678)  -- (P2349);
\draw[arr] (R9678)  -- (P5678);
\draw[arr] (R9678)  -- (P1789);
\draw[arr] (P6789)  -- (R9678);
\draw[arr] (P1789)  -- (P6789);
\draw[arr] (P78910) -- (P1789);
\draw[arr] (P1789)  -- (Q78);
\draw[arr] (Q78)    -- (P18910);
\draw[arr] (P18910) -- (Q58);
\draw[arr] (Q58)    -- (P15910);
\draw[arr] (P15910) -- (P18910);
\draw[arr] (P15910) -- (P2345);
\draw[arr] (P2345)  to[bend left=8] (Q56);
\draw[arr] (Q56)    -- (R10256);
\draw[arr] (R10256) -- (R10567);
\draw[arr] (R10256) -- (P12910);
\draw[arr] (P12910) -- (Q56);
\draw[arr] (P12310) -- (R10256);
\draw[arr] (R10567) -- (P2567);
\draw[arr] (P2567)  -- (R10256);
\draw[arr] (R10256) -- (P2356);
\draw[arr] (P2567)  -- (P3567);
\draw[arr] (P3567)  -- (P4567);
\draw[arr] (P3567)  -- (P2356);
\draw[arr] (P2356)  -- (P2567);
\draw[arr] (P2356)  -- (P3456);
\draw[arr] (P3456)  -- (P3567);
\draw[arr] (P2356)  to[bend right=32] (P12310);
\end{pgfonlayer}
\end{tikzpicture}
}
\end{center}
\caption{A seed $\overline{\Sigma}$ which contains the image of the seed from \cref{fig:rect1}. \label{fig:imagerect1}}
\end{figure}

We now check that 
 property (2) of \cref{def:quasi} holds.
We have that 
\begin{align*}
\hat{y}_{\Sigma}(P_{5679}) &=\frac{P_{5689}P_{567\, 10}}{P_{5678}P_{569\, 10}} \qquad \text{ so }\Phi_n(\hat{y}_{\Sigma}(P_{5679})) =\frac{R^9_{568} R^{10}_{567}}{P_{5678} Q_{56} P_{1234}}\\
\hat{y}_{\Sigma}(P_{5689}) &=\frac{P_{5789}P_{569\, 10}}{P_{5679}P_{589\, 10}}  
\qquad \text{ so }\Phi_n(\hat{y}_{\Sigma}(P_{5689})) =\frac{R^9_{578} Q_{56}}{R^9_{567} Q_{58}}\\
\hat{y}_{\Sigma}(P_{5789}) &=\frac{P_{6789}P_{589\, 10}}{P_{5689}P_{789\, 10}} \qquad \text{ so }\Phi_n(\hat{y}_{\Sigma}(P_{5789})) =\frac{R^9_{678} Q_{58}}{R^9_{568} Q_{78}}\\
    \end{align*}
These last three quantities agree with the exchange ratios 
$\hat{y}_{\overline{\Sigma}}(R^9_{567})$,
$\hat{y}_{\overline{\Sigma}}(R^9_{568})$, and
$\hat{y}_{\overline{\Sigma}}(R^9_{578}).$

To obtain the seed $\overline{\Sigma}$ in \cref{fig:imagerect1}, we start with the rectangles seed of $\Gr(4,10)$ in the cyclic order $5,6,7,8,9,10,1,2,3,4$. Its frozen
variables are the ten cyclic intervals and its mutable vertices (named for the cluster variables that are initially associated to them) are
\[
v_{5679},v_{567\,10},v_{1567},v_{2567},v_{3567},\;
v_{5689},v_{569\,10},v_{156\,10},v_{1256},v_{2356},\;
v_{5789},v_{589\,10},v_{159\,10},v_{125\,10},v_{1235}.
\]
We then apply the sequence of 13 mutations
\[
\mu_{v_{1235}},\mu_{v_{1256}},\mu_{v_{125\,10}},\mu_{v_{567\,10}},\mu_{v_{156\,10}},
\mu_{v_{1567}},\mu_{v_{5679}},\mu_{v_{569\,10}},\mu_{v_{567\,10}},\mu_{v_{5689}},
\mu_{v_{589\,10}},\mu_{v_{569\,10}},\mu_{v_{5789}}.
\]

\subsection*{The case $n=11$}

We start with the rectangles seed for  $\Gr(4,\{5,\dots,11\})\cong\Gr(4,7)$.

\begin{figure}[h]
\begin{center}
\resizebox{0.66\linewidth}{!}{%
\begin{tikzpicture}[
  >=Stealth,
  mut/.style={draw=srcred, text=srcred, rounded corners, align=center, inner sep=3pt, minimum width=2.0cm, minimum height=0.72cm, font=\scriptsize\bfseries, fill=white},
  frz/.style={draw, rounded corners, align=center, inner sep=3pt, minimum width=2.0cm, minimum height=0.72cm, font=\scriptsize, fill=gray!20},
  arr/.style={->, line width=0.55pt}
]
\node[frz] (E) at (-3.1,0.9) {$\Pl{5678}$};
\node[mut] (R11) at (0,0)     {$\Pl{5679}$};
\node[mut] (R12) at (3.2,0)   {$\Pl{567\,10}$};
\node[frz] (R13) at (6.4,0)   {$\Pl{567\,11}$};
\node[mut] (R21) at (0,-1.8)  {$\Pl{5689}$};
\node[mut] (R22) at (3.2,-1.8){$\Pl{569\,10}$};
\node[frz] (R23) at (6.4,-1.8){$\Pl{56\,10\,11}$};
\node[mut] (R31) at (0,-3.6)  {$\Pl{5789}$};
\node[mut] (R32) at (3.2,-3.6){$\Pl{589\,10}$};
\node[frz] (R33) at (6.4,-3.6){$\Pl{59\,10\,11}$};
\node[frz] (R41) at (0,-5.4)  {$\Pl{6789}$};
\node[frz] (R42) at (3.2,-5.4){$\Pl{789\,10}$};
\node[frz] (R43) at (6.4,-5.4){$\Pl{89\,10\,11}$};
\begin{pgfonlayer}{background}
\draw[arr] (E) -- (R11);
\draw[arr] (R11) -- (R12); \draw[arr] (R11) -- (R21);
\draw[arr] (R12) -- (R13); \draw[arr] (R12) -- (R22); \draw[arr] (R22) -- (R11);
\draw[arr] (R23) -- (R12);
\draw[arr] (R21) -- (R22); \draw[arr] (R21) -- (R31);
\draw[arr] (R22) -- (R23); \draw[arr] (R22) -- (R32); \draw[arr] (R32) -- (R21);
\draw[arr] (R33) -- (R22);
\draw[arr] (R31) -- (R32); \draw[arr] (R31) -- (R41);
\draw[arr] (R32) -- (R33); \draw[arr] (R32) -- (R42); \draw[arr] (R42) -- (R31);
\draw[arr] (R43) -- (R32);
\end{pgfonlayer}
\end{tikzpicture}
}
\end{center}
\caption{The rectangles seed $\Sigma$ for 
$\Gr(4,\{5,6,\dots,11\})$\label{fig:rect2}}
\end{figure}
When we apply $\Phi_n$ to this seed, we find the following:
\[
\begin{aligned}
\text{Five elements fixed by $\Phi_n$:}\quad &\Pl{5678},\ {\color{srcred}\Pl{5679}},\ {\color{srcred}\Pl{5689}},\
{\color{srcred}\Pl{5789}},\ \Pl{6789},\\
\text{Four $R^{10}$-type  variables:}\quad &{\color{srcred}\Pl{567\,10}}\mapsto{\color{srcred}\Rv{10}{567}}/\Pl{1234},\quad
{\color{srcred}\Pl{569\,10}}\mapsto{\color{srcred}\Rv{10}{569}}/\Pl{1234},\\
&{\color{srcred}\Pl{589\,10}}\mapsto{\color{srcred}\Rv{10}{589}}/\Pl{1234},\quad
\Pl{789\,10}\mapsto\Rv{10}{789}/\Pl{1234},\\
\text{One $R^{11}$-type  variable:}\quad &\Pl{567\,11}\mapsto\Rv{11}{567}/\Pl{1234},\\
\text{Three $Q$-type  variables:}\quad &\Pl{56\,10\,11}\mapsto\Qv{56}/\Pl{1234},\quad
\Pl{59\,10\,11}\mapsto\Qv{59}/\Pl{1234},\quad
\Pl{89\,10\,11}\mapsto\Qv{89}/\Pl{1234}.
\end{aligned}
\]
Note  that the three mutable variables in the leftmost column of $\Sigma$ are fixed by $\Phi_n$.

\cref{fig:imagerect2} shows a seed for $\Gr(4,11)$ that 
contains the image of the rectangles seed for 
$\Gr(4,\{5,\dots,11\})$. The images under $\Phi_n$ of the initial cluster variables are shown in bold, and the images of the initial mutable cluster variables colored in red.  
As before,  properties (1) and (3) of \cref{def:quasi} are clearly satisfied for the rectangles seed $\Sigma$ and the new seed 
$\overline{\Sigma}$.  In particular, the induced quiver on the images of the mutable cluster variables is isomorphic to the induced quiver on the original mutable cluster variables, and this isomorphism takes
each mutable cluster variable $x$ to $\Phi_n(x)$ (up to a frozen factor). 
We can also verify property (2) with a direct computation.

\begin{figure}
\begin{center}
\resizebox{0.99\linewidth}{!}{%
\begin{tikzpicture}[
  >=Stealth,
  mut/.style={draw, rounded corners, align=center, inner sep=3pt, minimum width=1.8cm, minimum height=0.78cm, font=\scriptsize, fill=white},
  imgmut/.style={draw, rounded corners, align=center, inner sep=3pt, minimum width=1.8cm, minimum height=0.78cm, font=\scriptsize\bfseries, fill=white},
  redmut/.style={draw=srcred, text=srcred, rounded corners, align=center, inner sep=3pt, minimum width=1.8cm, minimum height=0.78cm, font=\scriptsize\bfseries, fill=white},
  frz/.style={draw, rounded corners, align=center, inner sep=3pt, minimum width=1.8cm, minimum height=0.78cm, font=\scriptsize, fill=gray!20},
  imgfrz/.style={draw, rounded corners, align=center, inner sep=3pt, minimum width=1.8cm, minimum height=0.78cm, font=\scriptsize\bfseries, fill=gray!20},
  arr/.style={->, line width=0.55pt}
]
\node[imgfrz] (E)      at (-3.4, 1.1) {$\BPl{5678}$};
\node[redmut] (P5679)  at (0,0)       {$\RPl{5679}$};
\node[redmut] (R10567) at (3.6,0)     {$\RRv{10}{567}$};
\node[imgmut] (R11567) at (7.2,0)     {$\BRv{11}{567}$};
\node[redmut] (P5689)  at (0,-2.1)    {$\RPl{5689}$};
\node[redmut] (R10569) at (3.6,-2.1)  {$\RRv{10}{569}$};
\node[imgmut] (Q56)    at (7.2,-2.1)  {$\BQv{56}$};
\node[redmut] (P5789)  at (0,-4.2)    {$\RPl{5789}$};
\node[redmut] (R10589) at (3.6,-4.2)  {$\RRv{10}{589}$};
\node[imgmut] (Q59)    at (7.2,-4.2)  {$\BQv{59}$};
\node[imgfrz] (P6789)  at (0,-6.3)    {$\BPl{6789}$};
\node[imgmut] (R10789) at (3.6,-6.3)  {$\BRv{10}{789}$};
\node[imgmut] (Q89)    at (7.2,-6.3)  {$\BQv{89}$};
\node[frz] (P1234)   at (-3.4,-1.6)  {$\Pl{1234}$};
\node[mut] (P23410)  at (-3.4,-4.0)  {$\Pl{234\,10}$};
\node[frz] (P78910)  at (0,-8.6)     {$\Pl{789\,10}$};
\node[mut] (P18910)  at (3.6,-8.6)   {$\Pl{189\,10}$};
\node[frz] (P891011) at (0,-10.7)    {$\Pl{89\,10\,11}$};
\node[mut] (P151011) at (10.8,-5.4)  {$\Pl{15\,10\,11}$};
\node[frz] (P191011) at (10.8,-7.8)  {$\Pl{19\,10\,11}$};
\node[frz] (P2345)   at (14.4,-6.6)  {$\Pl{2345}$};
\node[mut] (R11256)  at (10.8,-1.0)  {$\Rv{11}{256}$};
\node[frz] (P12311)  at (10.8, 1.3)  {$\Pl{123\,11}$};
\node[frz] (P121011) at (10.8,-3.2)  {$\Pl{12\,10\,11}$};
\node[mut] (P2567)   at (14.4, 0.4)  {$\Pl{2567}$};
\node[mut] (P3567)   at (17.8, 0.4)  {$\Pl{3567}$};
\node[frz] (P4567)   at (21.2, 0.4)  {$\Pl{4567}$};
\node[mut] (P2356)   at (17.8,-2.0)  {$\Pl{2356}$};
\node[frz] (P3456)   at (21.2,-2.0)  {$\Pl{3456}$};
\begin{pgfonlayer}{background}
\draw[arr] (E)       -- (P5679);
\draw[arr] (P5679)   -- (R10567);
\draw[arr] (P5679)   -- (P5689);
\draw[arr] (R10567)  -- (R11567);
\draw[arr] (R10567)  -- (R10569);
\draw[arr] (Q56)     -- (R10567);
\draw[arr] (P5689)   -- (R10569);
\draw[arr] (P5689)   -- (P5789);
\draw[arr] (R10569)  -- (Q56);
\draw[arr] (R10569)  -- (R10589);
\draw[arr] (R10569)  -- (P5679);
\draw[arr] (Q59)     -- (R10569);
\draw[arr] (P5789)   -- (R10589);
\draw[arr] (P5789)   -- (P6789);
\draw[arr] (R10589)  -- (Q59);
\draw[arr] (R10589)  -- (R10789);
\draw[arr] (R10589)  -- (P5689);
\draw[arr] (Q89)     -- (R10589);
\draw[arr] (R10789)  -- (P5789);
\draw[arr] (Q56)     -- (Q59);
\draw[arr] (Q59)     -- (Q89);
\draw[arr] (P1234)   -- (R10567);
\draw[arr] (R11567)  to[bend left=8] (P1234);
\draw[arr] (P23410)  -- (P1234);
\draw[arr] (P23410)  -- (P78910);
\draw[arr] (R10789)  -- (P23410);
\draw[arr] (R10789)  -- (P18910);
\draw[arr] (P78910)  -- (R10789);
\draw[arr] (P18910)  -- (P78910);
\draw[arr] (P18910)  -- (Q89);
\draw[arr] (P891011) -- (P18910);
\draw[arr] (Q89)     -- (P191011);
\draw[arr] (P191011) -- (Q59);
\draw[arr] (Q59)     -- (P151011);
\draw[arr] (P151011) -- (P191011);
\draw[arr] (P151011) -- (P2345);
\draw[arr] (P2345)   to[bend left=8] (Q56);
\draw[arr] (Q56)     -- (R11256);
\draw[arr] (R11256)  -- (R11567);
\draw[arr] (R11256)  -- (P121011);
\draw[arr] (P121011) -- (Q56);
\draw[arr] (P12311)  -- (R11256);
\draw[arr] (R11567)  -- (P2567);
\draw[arr] (P2567)   -- (R11256);
\draw[arr] (R11256)  -- (P2356);
\draw[arr] (P2567)   -- (P3567);
\draw[arr] (P3567)   -- (P4567);
\draw[arr] (P3567)   -- (P2356);
\draw[arr] (P2356)   -- (P2567);
\draw[arr] (P2356)   -- (P3456);
\draw[arr] (P3456)   -- (P3567);
\draw[arr] (P2356)   to[bend right=32] (P12311);
\end{pgfonlayer}
\end{tikzpicture}
}
\end{center}
\caption{A seed $\overline{\Sigma}$ which contains the image of the seed from \cref{fig:rect2}. \label{fig:imagerect2}}
\end{figure}

 To obtain $\overline{\Sigma}$,  we
take the rectangles seed of $\Gr(4,11)$ in the cyclic order $5,6,\dots,11,1,2,3,4$, which has mutable vertices 
\begin{align*}
&v_{5679},v_{567\,10},v_{567\,11},v_{1567},v_{2567},v_{3567},
v_{5689},v_{569\,10},v_{56\,10\,11},v_{156\,11},v_{1256},v_{2356},
v_{5789},v_{589\,10},v_{59\,10\,11},\\&v_{15\,10\,11},v_{125\,11},v_{1235}.
\end{align*}
Now we apply the following sequence of 13 mutations.
\[
\mu_{v_{1235}},\mu_{v_{1256}},\mu_{v_{125\,11}},\mu_{v_{567\,11}},\mu_{v_{156\,11}},
\mu_{v_{1567}},\mu_{v_{567\,10}},\mu_{v_{56\,10\,11}},\mu_{v_{567\,11}},\mu_{v_{569\,10}},
\mu_{v_{59\,10\,11}},\mu_{v_{56\,10\,11}},\mu_{v_{589\,10}} .
\]

\begin{proof}[Proof of \cref{thm:6leg}]
For general $n$, we can similarly verify that $\Phi_n$ is a cluster quasihomomorphism.
To find a seed $\overline{\Sigma}$ containing the image under $\Phi_n$ of the rectangles seed, we do the following:
start from
the rectangles seed of $\Gr(4,n)$ in the cyclic order $5,6,\dots,n,1,2,3,4$, then apply the following sequence of 13 mutations:
\[
\begin{aligned}
&\mu_{v_{1235}},\ \mu_{v_{1256}},\ \mu_{v_{1,2,5,n}},\ \mu_{v_{5,6,7,n}},\ \mu_{v_{1,5,6,n}},\
\mu_{v_{1567}},\ \mu_{v_{5,6,7,n-1}},\\
&\mu_{v_{5,6,n-1,n}},\ \mu_{v_{5,6,7,n}},\ \mu_{v_{5,6,n-2,n-1}},\ \mu_{v_{5,n-2,n-1,n}},\
\mu_{v_{5,6,n-1,n}},\ \mu_{v_{5,n-3,n-2,n-1}} .
\end{aligned}
\]
The resulting cluster contains the image of the rectangles cluster under $\Phi_n$.  It looks like the seed in \cref{fig:imagerect2}, but each time we increase $n$ by $1$, we add a new column of mutable cluster variables at the left, which are all Pl\"ucker coordinates.
The seven extra mutable variables in $\overline{\Sigma}$ (beyond those that occur as images of mutable variables in $\Sigma$) are:
\[
\Pl{2356},\quad \Pl{2567},\quad \Pl{3567},\quad
\Pl{2,3,4,n-1},\quad \Pl{1,5,n-1,n},\quad \Pl{1,n-3,n-2,n-1},\quad
\Rv{n}{256},
\]
where $\Rv{n}{256}=\Pl{2,3,4,n}\Pl{1256}-\Pl{1234}\Pl{2,5,6,n}.$
As before, we can check that 
properties (1), (2), and (3) of \cref{def:quasi} are satisfied.    
\end{proof}

\subsection{Proof of the main theorem}
We are now ready to prove the main theorem of this section.  Our proof is analogous to the proof of 
\cref{thm:main1}.  

\begin{theorem}\label{thm:main2}
If a $\Gr(4,n)$ tensor diagram $D$ is a tree web, then 
$[D]$ is a cluster or coefficient variable for $\C[\widehat{\Gr}(4,n)]$.
\end{theorem}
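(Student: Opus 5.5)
We argue by induction on the number $n$ of boundary vertices, exactly as in the proof of \cref{thm:main1}. By \cref{lem:add-marker-embed-gr} we may assume every boundary vertex has degree one, so $n=4m$. The base case $n=4$ is a tetrapod, whose invariant is a Pl\"ucker coordinate $P_{abcd}$ and hence a cluster or coefficient variable.

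For the inductive step, \cref{prop:decompose} shows that $D$ is obtained from a smaller planar tree web $D'$ by one of the four local operations in \cref{fig:promotions_for_Gr4n}. By \cref{cor:cycrefl}, cyclic shifts and reflections preserve cluster and frozen variables, so we may rotate the indices so that the added pattern sits on the boundary positions used in \cref{lem:case1}, \cref{lem:case2}, \cref{lem:9leg} or \cref{lem:6leg}. We then check, as in \cref{adding 4-leg is applying psi}, that $[D]$ equals the image of $[D']$ under the corresponding map, up to a frozen factor. The evaluation argument is the same in each case.

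\begin{itemize}
\item Orient all edges toward the vertex where the pattern attaches.
\item The rest of the web, $D_{\rm rest}$, involves none of the new boundary vectors, so the map fixes its contribution.
\item The single edge (or double edge, in the 6-leg case) replaced by the pattern becomes exactly the vector (or $2$-vector) computed in those lemmas.
\end{itemize}

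Multilinearity then gives the identity. The rescaling factors are $P_{1234}$ for the 5-leg and 6-leg patterns, and $\langle 12|345|678\rangle$ for the 9-leg pattern. Since each map is a cluster quasihomomorphism (\cref{lem:case1}, \cref{lem:case2}, \cref{lem:9leg}, \cref{thm:6leg}) and $[D']$ is a cluster or coefficient variable by induction, $[D]$ is a cluster or coefficient variable times a Laurent monomial in frozen variables.

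The main obstacle is the 9-leg case. There the target is the localization $\C[\widehat{\Gr}(4,n)][\langle 12|345|678\rangle^{-1}]$, in which $\langle 12|345|678\rangle$ is frozen, so the frozen factor could a priori involve this non-frozen element of the original Grassmannian. Two things need to be verified.

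First, $\langle 12|345|678\rangle$ is itself a cluster variable of $\C[\widehat{\Gr}(4,n)]$ (it is the invariant of the 9-leg tree web on $\{n,1,\dots,8\}$ collapsed with one leg). We would argue that cluster variables of the localized structure, other than this one, are cluster variables of the original structure, because freezing a vertex just restricts the exchange graph.

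Second, we must remove the frozen factor in all cases. For this we use irreducibility: $[D]$ is a regular function on $\widehat{\Gr}(4,n)$, and it is not divisible by any frozen variable or by $\langle 12|345|678\rangle$. The argument runs as in the end of the proof of \cref{thm:main1}. A factorization $[D]=[E][F]$ with $F$ a frozen tetrapod or a smaller tree web would force the multidegree of $D$ to split. Because $D$ is connected with univalent boundary, this would produce a disconnected superposition that cannot be skein-equivalent to the connected tree $D$. We expect to cite or adapt \cite[Lemma 12.2]{FP16} to $\SL_4$ for this step, using the fact that multidegree is an invariant.

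Once there is no extra factor, $[D]$ is a cluster or coefficient variable, which completes the induction.
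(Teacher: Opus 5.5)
Your proposal is correct and follows essentially the same route as the paper. You induct using \cref{prop:decompose}, realize each of the four local operations as the corresponding cluster quasihomomorphism (\cref{lem:case1}, \cref{lem:case2}, \cref{lem:9leg}, and \cref{lem:6leg} with \cref{thm:6leg}), and remove the resulting frozen factor by the irreducibility argument of \cref{thm:main1} and \cite[Lemma 12.2]{FP16}. Your explicit remark on the 9-leg case spells out a step the paper leaves implicit: freezing $\langle 12|345|678\rangle$ only restricts the exchange graph, so cluster variables of the localized structure are still cluster variables of $\C[\widehat{\Gr}(4,n)]$.
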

\begin{proof}
    By \cref{prop:decompose}, each $\Gr(4,n)$ tree web can be built inductively using the four operations shown in \cref{fig:promotions_for_Gr4n}.  By \cref{lem:case1}, \cref{lem:case2}, \cref{lem:9leg}, and \cref{lem:6leg}, each of those operations corresponds to a cluster quasihomomorphism.  Since a cluster quasihomomorphism takes a 
    cluster or coefficient variable to a cluster or coefficient variable (possibly multiplied by a frozen factor), 
    we can inductively show that the invariant associated to a tree web is always a cluster or coefficient variable possibly multiplied by a frozen factor.

    But by the same arguments used in \cref{thm:main1}, we can show that the invariant associated to a tree web is irreducible.  In particular, the proof of \cite[Lemma 12.2]{FP16} applies in this situation.  Our alternative proof of irreducibility also applies, using the fact that the frozen variables that appear in 
    \cref{lem:case1}, \cref{lem:case2}, \cref{lem:9leg}, and \cref{lem:6leg} are either the cyclically consecutive Pl\"ucker coordinates, or (in the case of \cref{lem:9leg}) the quadratic cluster variable 
    ${\langle 12 |345 | 678 \rangle}$.  In all cases, these frozen variables are supported on consecutive boundary vertices, so the last argument of \cref{thm:main1} applies.
\end{proof}

\bibliographystyle{alpha}
\bibliography{biblio.bib}

\end{document}